\setlist[enumerate]{itemsep=0mm}
\declaretheorem[name=Theorem, numberwithin=section]{theorem}
\newtheorem{lemma}[theorem]{Lemma}
\newtheorem{definition}[theorem]{Definition}
\newtheorem*{example*}{Example}
\declaretheoremstyle[bodyfont=\normalfont]{remark-style}
\declaretheorem[name={Remark}, style=remark-style, unnumbered]{remark}
\numberwithin{equation}{section}
\theoremstyle{plain}
\newcommand{\N}{\mathds{N}}
\newcommand{\R}{\mathds{R}}
\def\hmath$#1${\texorpdfstring{{\rmfamily\textit{#1}}}{#1}}
\newcommand{\cE}{\mathcal{E}}
\newcommand{\BIGOP}[1]
{
\mathop{\mathchoice%
{\raise-0.22em\hbox{\huge $#1$}}%
{\raise-0.05em\hbox{\Large $#1$}}{\hbox{\large $#1$}}{#1}}}
\def\Xint#1{\mathchoice
   {\XXint\displaystyle\textstyle{#1}}%
   {\XXint\textstyle\scriptstyle{#1}}%
   {\XXint\scriptstyle\scriptscriptstyle{#1}}%
   {\XXint\scriptscriptstyle\scriptscriptstyle{#1}}%
   \!\int}
\def\XXint#1#2#3{{\setbox0=\hbox{$#1{#2#3}{\int}$}
     \vcenter{\hbox{$#2#3$}}\kern-.5\wd0}}
\def\dashint{\Xint-}
\newcommand{\BIGboxplus}{\mathop{\mathchoice%
{\raise-0.35em\hbox{\huge $\boxplus$}}%
{\raise-0.15em\hbox{\Large $\boxplus$}}{\hbox{\large $\boxplus$}}{\boxplus}}}
\DeclareMathOperator{\dist}{dist}
\DeclareMathOperator{\supp}{supp}
\renewcommand{\d}{\textnormal{d}}
\newcommand{\1}{{\mathbbm{1}}}
\begin{document}
\allowdisplaybreaks
 \title{Upper heat kernel estimates for nonlocal operators via Aronson's method}

\allowdisplaybreaks

\author{Moritz Kassmann}
\author{Marvin Weidner}

\address{Fakult\"{a}t f\"{u}r Mathematik\\Universit\"{a}t Bielefeld\\Postfach 100131\\D-33501 Bielefeld}
\email{moritz.kassmann@uni-bielefeld.de}
\urladdr{www.math.uni-bielefeld.de/$\sim$kassmann}

\address{Fakult\"{a}t f\"{u}r Mathematik\\Universit\"{a}t Bielefeld\\Postfach 100131\\D-33501 Bielefeld}
\email{mweidner@math.uni-bielefeld.de}

\keywords{nonlocal operator, Dirichlet form, fundamental solution, 
heat kernel estimate}

\thanks{Moritz Kassmann and Marvin Weidner gratefully acknowledge financial support by the German Research Foundation (SFB 1283 - 317210226 resp. GRK 2235 - 282638148).}

\subjclass[2010]{35K08, 60J25, 31C25, 47D07, 39B62}

\allowdisplaybreaks

\begin{abstract}
In his celebrated article, Aronson established Gaussian bounds for the fundamental solution to the Cauchy problem governed by a second order divergence form operator with uniformly elliptic coefficients. We extend Aronson's proof of upper heat kernel estimates to nonlocal operators whose jumping kernel satisfies a pointwise upper bound and whose energy form is coercive. A detailed proof is given in the Euclidean space and extensions to doubling metric measure spaces are discussed.  
\end{abstract}

\allowdisplaybreaks

\maketitle

\section{Introduction}

\subsection{Background}
Let $a_{i,j} : [0,T] \times \R^d \to (0,\infty)$, $i,j = 1,\dots,d$, be bounded, measurable coefficients which satisfy the usual uniform ellipticity condition. A celebrated result by D.G. Aronson from 1967 says that the fundamental solution $\Gamma(y,s;x,\eta)$ to the Cauchy problem
\begin{equation}
\label{eq:localCP}
\begin{cases}
\partial_t u - \partial_i(a_{i,j}\partial_j u) &= 0,~~ \text{ in } (\eta,T) \times \R^d, \\
u(\eta) &= u_0
\end{cases}
\end{equation}
satisfies the following two-sided estimate for all $0 \le \eta < s < T$, and $x,y \in \R^d$:
\begin{equation}
\label{eq:localHKB}
c_1 t^{-\frac{d}{2}}e^{-c_2 \frac{\vert x -y \vert^2}{s - \eta}} \le \Gamma(y,s;x,\eta) \le c_3 t^{-\frac{d}{2}}e^{-c_4 \frac{\vert x -y \vert^2}{s - \eta}},
\end{equation}
where $c_1,c_2,c_3,c_4 > 0$ depend only on $d$ and the ellipticity constants. In other words, the fundamental solution of the classical heat equation $\partial_t u - \Delta u = 0$ is an upper and lower bound of $\Gamma$ up to multiplicative constants, see \cite{Aro67}, \cite{Aro68}. In this sense, the bounds \eqref{eq:localHKB} are robust in the class of second order divergence form operators with bounded, measurable, uniformly elliptic coefficients. 

Aronson's proof is closely related to the so-called DeGiorgi-Nash-Moser theory for parabolic differential operators of second order with bounded, measurable and uniformly elliptic coefficients. The proof heavily relies on H\"older regularity estimates and the parabolic Harnack inequality for solutions to \eqref{eq:localCP}, see also \cite{ArSe67}.\\
\cite{Aro68} has initiated several research studies on estimates for fundamental solutions to parabolic equations in various contexts. An important feature of this research is that it connects partial differential equations with geometry. This is due to the sensitivity of the heat kernel to the geometric properties of the underlying space. This phenomenon becomes apparent in the celebrated works \cite{CLY81}, \cite{LiYa86}, where the method of Aronson was generalized to prove heat kernel estimates on complete Riemannian manifolds with nonnegative Ricci curvature. Some of their arguments have been further refined and generalized in \cite{Dav89}, where an integral estimate for the heat kernel was established that is useful for proving the upper bound in \eqref{eq:localHKB}. We refer the interested reader to \cite{PoEi84}, \cite{Gri09}, \cite{Li12} and the references therein for more detailed expositions on this topic.

\subsection{Main results}
The goal of this article is to extend Aronson's proof of upper heat kernel estimates to integro-differential operators of the form
\[ L_t u (x) = \text{ p.v.} \int_{\R^d} (u(y)-u(x))k(t;x,y) \d y, ~~ t \in (0,T),~ x \in \R^d.\]
Such operators are determined by a jumping kernel $k : (0,T) \times \R^d \times \R^d \to [0,\infty]$ which is assumed to be symmetric, i.e., $k(t;x,y) = k(t;y,x)$ and satisfies a pointwise upper bound
\begin{equation}
\label{eq:kupper}\tag{$k_{\le}$}
 k(t;x,y) \le \Lambda \vert x-y \vert^{-d-\alpha}, ~~ t \in (0,T),~ x,y \in \R^d,
\end{equation}
for some given constant $\Lambda > 0$, and $\alpha \in (0,2)$, $0 < T \le \infty$.\\
Moreover, we assume that there is $\lambda > 0$ such that for any ball $B \subset \R^d$ and every $v \in H^{\alpha/2}(B)$:
\begin{equation}
\label{eq:klower}\tag{$\cE_{\ge}$}
\int_{B}\int_{B} (v(x) - v(y))^2 k(t;x,y) \d y \d x \ge \lambda [v]^2_{H^{\alpha/2}(B)} , ~~ t \in (0,T).
\end{equation}
\eqref{eq:klower} can be thought of as a coercivity assumption on $k$ and is substantially weaker than a pointwise lower bound. We refer the reader to \autoref{sec:prelim} for a more detailed discussion and to \autoref{sec:extensions} where we explain a possible extension of our method and replace \eqref{eq:klower} by a Faber-Krahn inequality.\\
We are now ready to state the main result of this article in the aforementioned setup. For a possible extension to doubling metric measure spaces and jumping kernels of mixed type, we refer to \autoref{thm:mainmixed}. 

\begin{theorem}
\label{thm:main}
Let $k : (0,T) \times \R^d \times \R^d \to [0,\infty]$ be symmetric and assume \eqref{eq:kupper}, \eqref{eq:klower}. Let $p(y,s;x,\eta)$ be the fundamental solution to the Cauchy problem
\begin{align}
\label{eq:CP}
\begin{cases}
\partial_t u - L_t u &= 0, ~~ \text{ in } (\eta,T) \times \R^d,\\
u(\eta) &= u_0 \in L^2(\R^d),
\end{cases}
\end{align}
where $\eta \in [0,T)$. Then there exists a constant $c > 0$ depending on $d,\alpha,\lambda,\Lambda$ such that for every $0 \le \eta < s < T$, and $x,y \in \R^d$:
\begin{equation}
\label{eq:uhke}
p(y,s;x,\eta) \le c(s-\eta)^{-\frac{d}{\alpha}}\left(1 + \frac{\vert x-y \vert^{\alpha}}{s-\eta}\right)^{-\frac{d+\alpha}{\alpha}}.
\end{equation}
\end{theorem}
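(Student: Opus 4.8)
\emph{Reduction.} The plan is to follow Aronson's two--step scheme. Since the right--hand side of \eqref{eq:uhke} is comparable to $\min\{(s-\eta)^{-d/\alpha},\,(s-\eta)|x-y|^{-d-\alpha}\}$, it suffices to prove, writing $t:=s-\eta$ and $r:=|x-y|$, an \emph{on--diagonal} bound $p(y,s;x,\eta)\le c\,t^{-d/\alpha}$ for all $x,y$, together with an \emph{off--diagonal} bound $p(y,s;x,\eta)\le c\,t\,r^{-d-\alpha}$ in the range $r^{\alpha}\ge A\,t$ for a suitably large constant $A=A(d,\alpha,\lambda,\Lambda)$ (for $r^{\alpha}<A\,t$ the on--diagonal bound already implies \eqref{eq:uhke}). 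Step one of Aronson's scheme produces the on--diagonal bound via a Nash--type inequality, step two the off--diagonal bound via a Davies--Gaffney--type weighted energy estimate.

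\emph{Ultracontractivity.} First I would upgrade the coercivity assumption \eqref{eq:klower} to a global statement: taking $B=B_R$ there and letting $R\to\infty$ gives $\cE_t(v,v)\ge\lambda[v]^2_{H^{\alpha/2}(\R^d)}$, uniformly in $t\in(0,T)$, for the Dirichlet form $\cE_t(u,v)=\tfrac12\iint(u(x)-u(y))(v(x)-v(y))k(t;x,y)\,\d x\,\d y$. Combined with the fractional Nash inequality $\|v\|_2^{2+2\alpha/d}\le C[v]^2_{H^{\alpha/2}(\R^d)}\|v\|_1^{2\alpha/d}$ this yields $\cE_t(v,v)\ge c\|v\|_2^{2+2\alpha/d}\|v\|_1^{-2\alpha/d}$, and Nash's classical argument, applied to the evolution family $(U(s,\eta))_{\eta\le s}$ solving \eqref{eq:CP}, then gives $\|U(s,\eta)\|_{L^1\to L^\infty}\le c\,(s-\eta)^{-d/\alpha}$, i.e. the on--diagonal bound, which is the base estimate for step two.

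\emph{The weighted estimate and the large jumps.} This is where Aronson's proof must genuinely be adapted. For the classical heat equation one uses an unbounded Lipschitz weight $\psi$, obeying $\tfrac{\d}{\d t}\|e^{\psi}u_t\|_2^2\le c\|\nabla\psi\|_\infty^2\|e^{\psi}u_t\|_2^2$; carrying out the analogous computation for $L_t$ I would arrive at the quantity $\Gamma_t(\psi)=\sup_x\int\sinh^2\!\big(\tfrac{\psi(x)-\psi(y)}{2}\big)k(t;x,y)\,\d y$ in place of $\|\nabla\psi\|_\infty^2$, and here is the obstacle: by \eqref{eq:kupper} $\Gamma_t(\psi)$ is infinite for every unbounded $\psi$, while for bounded $\psi$ the pointwise bound extracted from the weighted $L^2$ estimate carries a factor $e^{c\|\psi\|_\infty}$ that exactly offsets the gain. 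To get around this I would run the weighted estimate on a \emph{truncated} kernel: fix $\rho>0$, put $k^{(\rho)}(t;x,y):=k(t;x,y)\1_{\{|x-y|\le\rho\}}$ and let $p^{(\rho)}$ be the fundamental solution of $k^{(\rho)}$. Since $k^{(\rho)}$ has range $\rho$, the quantity $\Gamma_t(\psi)$ computed with $k^{(\rho)}$ is finite for \emph{every} Lipschitz $\psi$, so the exponential--weight argument of Aronson and Davies goes through unchanged; and because $\cE^{(\rho)}_t(v,v)\ge\lambda[v]^2_{H^{\alpha/2}(\R^d)}-c\Lambda\rho^{-\alpha}\|v\|_2^2$, $p^{(\rho)}$ still obeys the on--diagonal bound on the time scale $t\lesssim\rho^{\alpha}$. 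Using $\psi(z)=\theta\langle z,e\rangle$ with $|e|=1$ and optimising over $\theta$ and $e$ in
\[
p^{(\rho)}(y,s;x,\eta)\le c\,t^{-d/\alpha}\exp\!\Big(-\theta r+c\Lambda\,t\!\!\int_{|w|\le\rho}\!\!\sinh^2\!\big(\tfrac{\theta|w|}{2}\big)|w|^{-d-\alpha}\,\d w\Big)
\]
then gives super--polynomial off--diagonal decay of $p^{(\rho)}$ as soon as $r\gg\rho$. The last step is Meyer's decomposition (Duhamel's formula for $k=k^{(\rho)}+(k-k^{(\rho)})$),
\[
p(y,s;x,\eta)\le p^{(\rho)}(y,s;x,\eta)+\int_\eta^s\!\!\iint p^{(\rho)}(z,s;x,\sigma)\,(k-k^{(\rho)})(\sigma;z,w)\,p(w,\sigma;y,\eta)\,\d w\,\d z\,\d\sigma,
\]
taken with $\rho=\eps\,r$ for a small $\eps$ depending only on $d,\alpha,\lambda,\Lambda$: the first term is negligible by the preceding display, while in the integral the bound $(k-k^{(\rho)})(\sigma;z,w)\le\Lambda|z-w|^{-d-\alpha}\1_{\{|z-w|>\eps r\}}$ together with $\int p^{(\rho)}\le1$ and $\int p\le1$ produces $c\Lambda\,t\,r^{-d-\alpha}$; the contributions where $z$ lies far from $x$ or $w$ far from $y$ are handled by bootstrapping the estimate being proved (first for $s-\eta$ and $|x-y|$ in a bounded range, then removing the restriction). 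This establishes the off--diagonal bound and completes the proof of \eqref{eq:uhke}.

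\emph{Expected main obstacle.} The one genuinely nonroutine point is the conflict just described: the exponential weights at the heart of the Davies--Gaffney estimate are incompatible with the polynomial tail \eqref{eq:kupper}, forcing the weighted energy estimate onto the truncated kernel and requiring it to be recombined with the large--jump part through Meyer's formula --- and it is precisely there that the pointwise upper bound \eqref{eq:kupper} enters decisively. Everything else (the Nash argument, the differential inequality from the weighted estimate and its optimisation, and the a priori integrability needed to differentiate $\|e^{\psi}u_t\|_2^2$ and to pass from $L^2$ estimates to a pointwise kernel bound) follows the classical template; the one pervasive technical burden is keeping all constants uniform in $t\in(0,T)$, since $k$ is allowed to depend on time.
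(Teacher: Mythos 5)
Your proposal is correct in its overall architecture (on-diagonal bound, truncation, weighted estimate for $p_\rho$, Meyer decomposition with $\rho \asymp |x-y|$), and it would yield \eqref{eq:uhke}; but the weighted estimate at its heart is Davies' method, not Aronson's, so you arrive at a genuinely different proof than the one in the paper --- indeed, the one the paper is explicitly written to avoid. Your plan runs the differential inequality $\tfrac{d}{dt}\|e^{\psi}u_t\|_2^2 \le c\,\Gamma_t(\psi)\|e^{\psi}u_t\|_2^2$ for the truncated form with affine $\psi(z)=\theta\langle z,e\rangle$, then optimizes over $\theta$; this is precisely the Carlen--Kusuoka--Stroock route that \cite{BGK09}, \cite{ChKu08} follow and that the introduction singles out as the established alternative. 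The paper instead fixes, once and for all, an explicit bounded weight
\begin{equation*}
H(t,x)=\exp\!\Bigl(-\log\bigl(\tfrac{\rho^{\alpha}}{\nu[2(s-\eta)-(t-\eta)]}\bigr)\bigl(\tfrac{|x-y|}{3\rho}\vee 1\bigr)\Bigr)
\end{equation*}
and verifies directly (Lemma~\ref{lemma:H}) that it satisfies the nonlocal analogue of Aronson's differential inequality $C\,\Gamma^{\alpha}_{\rho}(H^{1/2},H^{1/2})\le -\partial_t H$; Lemma~\ref{lemma:Aronson} then yields a weighted $L^2$-bound \emph{without} any optimization over a family of exponential weights, and the pointwise bound on $p_\rho$ is extracted by a parabolic $L^{\infty}$--$L^2$ estimate (De Giorgi iteration, Lemma~\ref{lemma:truncL2}) rather than by the semigroup ultracontractivity step intrinsic to Davies' scheme. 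Roughly speaking: Davies trades a family of weighted $L^2\to L^2$ bounds plus on/off-diagonal ultracontractivity for the kernel estimate, while Aronson trades a single weighted $L^2$-conservation inequality plus a local $L^{\infty}$--$L^2$ estimate; the decay rates that emerge are the same (your optimal $\theta\asymp\rho^{-1}\log(\rho^{\alpha}/t)$ is exactly the exponent built into $H$), but the mechanisms differ. Two small remarks on your sketch: the final bootstrapping step you invoke in Meyer's decomposition is unnecessary --- the crude bound $(k-k^{(\rho)})\le\Lambda\rho^{-d-\alpha}$ together with $\int p^{(\rho)}\le 1$, $\int p\le 1$ already gives $c\Lambda\,t\,\rho^{-d-\alpha}$ with no localization in $z,w$ needed; and for the time-inhomogeneous $k$ the Duhamel/Meyer identity must be justified (the paper does this via the parabolic maximum principle in the appendix, since the usual semigroup/resolvent arguments are unavailable), which is a point worth being explicit about.
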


Estimate \eqref{eq:uhke} states that the fundamental solution to \eqref{eq:CP} possesses the same upper bound as the fundamental solution to the fractional heat equation $\partial_t u + (-\Delta)^{\alpha/2} u = 0$, see \cite{BlGe60}.

Different versions of \autoref{thm:main} already exist in the literature. Let us give a brief account on the history of heat kernel bounds for nonlocal operators. Two-sided estimates of the form \eqref{eq:uhke} have been established by \cite{BaLe02}, \cite{ChKu03} using a probabilistic approach. They assume that the jumping kernel is pointwise comparable to $\vert x-y \vert^{-d-\alpha}$, $\alpha \in (0,2)$, from above and below. Their analysis of the upper heat kernel estimate heavily relies on \cite{CKS87}, where Davies' method was extended to a more general setup, including jump processes. Building upon this, \cite{BGK09} derived \eqref{eq:uhke} assuming \eqref{eq:kupper} and a Nash inequality.

In a series of articles, see \cite{GHL14}, \cite{GHH17}, \cite{GHH18}, the analysis of heat kernel estimates was extended to metric measure spaces  with walk dimension greater than $2$. The authors were able to characterize upper heat kernel estimates, as well as two-sided heat kernel estimates in terms of equivalent conditions on the jumping kernels and the geometry of the underlying space. Their approach does not use the underlying stochastic process and is based on certain comparison inequalities of the corresponding heat semigroups relying on the parabolic maximum principle. Note that Davies' method was extended to jumping kernels with jump index $\alpha >2$ in \cite{MuSa19}, \cite{HuLi18}. The aforementioned results assume certain homogeneity of the doubling measure space and do not deal with mixed-type jumping kernels.

In \cite{ChKu08}, \cite{CKKW21}, \cite{CKW21} upper and two-sided heat kernel estimates were investigated on doubling metric measures spaces for jumping kernels of mixed type. We would like to draw the reader's attention to Theorem 1.15 in \cite{CKW21}. In the case $\alpha < 2$, it states that upper heat kernel estimates of the form \eqref{eq:uhke} are equivalent to a pointwise upper bound on the jumping kernel and a Faber-Krahn inequality, which can be understood as an implicit lower bound. \\ A major difference between our approach and \cite{CKW21} is that our method relies on purely analytic arguments, while \cite{CKW21} makes essential use of the corresponding stochastic process. In  \autoref{thm:mainmixed} we extend our approach to doubling metric measure spaces and jumping kernels of mixed type. Let us mention that we prove on-diagonal heat kernel estimates with the help of a parabolic $L^\infty - L^1$-estimate, see \autoref{lemma:mixedLinftyL2}. This rather straightforward approach allows us to avoid truncation methods and the usage of the iteration techniques of \cite{Kig04}. 

In contrast to our setup, all jumping kernels in the results discussed above are assumed to be time-homogeneous. Note that it would require substantial effort to extend methods based on stochastic processes to situations with time-dependent jumping kernels. Heat kernel estimates for time-inhomogeneous jumping kernels were established in \cite{MaMi13}, \cite{MaMi13b} where the authors assume pointwise upper and lower bounds on the jumping kernel. Note that assuming pointwise lower bounds is more restrictive than \eqref{eq:klower}. The focus of these works lies on the treatment of an additional divergence-free drift of first order.

There are further results on heat kernel estimates for nonlocal operators, which are related to \autoref{thm:main}. For example, sharp two-sided estimates for jump processes on $\R^d$ with upper scaling index not strictly less than $2$ are established in \cite{BKKL19}. In \cite{KKK21}, heat kernel estimates for a certain class of jump processes with singular jumping measures are proved.

\subsection{Strategy of proof}
A main insight of Aronson's proof for second order differential operators is the observation that solutions $u$ to the Cauchy problem \eqref{eq:localCP} satisfy the weighted $L^2$-estimate
\begin{equation}
\label{eq:AronsonIdea}
\sup_{t \in (\eta,s)} \int_{\R^d} H(t,x) u^2(t,x) \d x \le \int_{\R^d} H(\eta,x) u_0^2(x) \d x
\end{equation}
for $0 \le \eta < s < T$, whenever $H$ satisfies
\begin{equation}
\label{eq:localHass}
C \vert \nabla H^{1/2} \vert^2 \le -\partial_t H, ~~ \text{ in } (\eta,s) \times \R^d,
\end{equation}
for a given number $C > 0$ depending on the ellipticity constants. \eqref{eq:localHass} is closely related to the famous Li-Yau inequality:
\begin{equation}
\label{eq:localLiYau}
\vert \nabla \log w \vert^2 \le \frac{d}{2 t} + \partial_t \log w.
\end{equation}
In fact, a direct computation reveals that the Gauss-Weierstrass kernel $w(t,x) = t^{-\frac{d}{2}}e^{-\frac{\vert x \vert^2}{4t}}$ satisfies \eqref{eq:localLiYau} with equality. By a scaling argument, it becomes evident that \eqref{eq:localHass} holds true for $H(t,x) = (C[t])^{\frac{d}{2}} w(C[t],x-y) = \exp(-\frac{\vert x-y \vert^2}{4 C [t]})$, where $[t] := 2(s-\eta) - (t-\eta)$ and $y \in \R^d$ can be chosen arbitrarily.\\
This insight suggests that some qualitative information on the decay of solutions to \eqref{eq:localCP} is encoded in the weighted $L^2$-estimate \eqref{eq:AronsonIdea}. Indeed, by combining \eqref{eq:AronsonIdea} with a localized $L^{\infty}-L^2$-estimate, as it was proved by Moser (\cite{Mos64}, \cite{Mos67}, \cite{Mos71}), one can deduce a qualitative estimate in terms of $H$, on how quick solutions to the Cauchy problem increase inside a ball that lies outside the support of the initial data. From such estimate, it is not difficult to deduce
\begin{equation*}
\left(\int_{\R^d \setminus B_{\sigma}(y)} \Gamma^2(y,s;z,\eta) \d z\right)^{1/2} \le c (s-\eta)^{-\frac{d}{4}}e^{-\frac{\sigma^2}{32C (s-\eta)}}
\end{equation*}
for every $\sigma > 0$ and $0 \le \eta < s < T$ with $s-\eta \le \sigma^2$. Together with the on-diagonal estimate $\Gamma(y,s;x,\eta) \le c(s-\eta)^{-\frac{d}{2}}$, one deduces the upper bound in \eqref{eq:localHKB} via a standard argument.

We would like to point out that \eqref{eq:localLiYau} is at the core of the works \cite{LiYa86}, \cite{CLY81}, where \eqref{eq:localLiYau} was used to derive a parabolic Harnack inequality on Riemannian manifolds with nonnegative Ricci-curvature and to establish its equivalence to Gaussian heat kernel bounds.

Next, let us summarize how we adapt Aronson's proof to integro-differential operators. First, we require a nonlocal analog of \eqref{eq:AronsonIdea}. We prove that there exist functions $H$ that satisfy
\begin{equation}
\label{eq:nonlocalHass}
C \Gamma^{\alpha}_{\rho} (H^{1/2},H^{1/2}) \le -\partial_t H, ~~ \text{ in } (\eta,s) \times \R^d
\end{equation}
given $C,\rho > 0$. Here, $\Gamma^{\alpha}_{\rho}$ denotes the $\rho$-truncated carr\'e du champ operator of order $\alpha \in (0,2)$, which is defined as follows:
\begin{equation*}
\Gamma^{\alpha}_{\rho}(f,f)(x) = \int_{B_{\rho}(x)} (f(x)-f(y))^2 \vert x-y \vert^{-d-\alpha}\d y, ~~ x \in \R^d.
\end{equation*}
By a careful choice of a function $H$ satisfying \eqref{eq:nonlocalHass}, we to deduce a nonlocal analog of \eqref{eq:AronsonIdea} for solutions to the $\rho$-truncated Cauchy problem, see \autoref{lemma:Aronson}. The corresponding integro-differential operator only takes into account differences up to distance $\rho$. In order to prove an a priori bound for the corresponding fundamental solution $p_{\rho}$ (see \autoref{thm:offdiagtrunc}), we derive a parabolic $L^{\infty}-L^2$-estimate in the spirit of \cite{Str18}, see \autoref{lemma:truncL2}. The estimate involves a nonlocal, truncated tail-term which requires special treatment. 
In a final step, we obtain the desired upper heat kernel estimate \eqref{eq:uhke} for $p$  by gluing together short and long jumps. Such argument is by now standard in the theory of jump processes.

Last, we explain several difficulties that occur when avoiding the detour via the truncated jumping kernel. \\
First, the corresponding $L^{\infty}-L^2$-estimate involves a non-truncated tail-term which cannot be controlled without any further assumptions on $k$.\\
Second, finding suitable weight functions $H$ which satisfy a non-truncated version of \eqref{eq:nonlocalHass} is a challenging task in the light of the following observation: The corresponding nonlocal analog of the Li-Yau inequality, which would imply an estimate of the form \eqref{eq:AronsonIdea} for solutions to \eqref{eq:CP}, reads as follows:
\begin{equation}
\label{eq:nonlocalLiYau}
\frac{\Gamma^{\alpha}(w_{\alpha}^{1/2},w_{\alpha}^{1/2})}{w_{\alpha}} \le \frac{d}{\alpha t} + \partial_t \log(w_{\alpha}).
\end{equation}
However, one can show that the fundamental solution $w_{\alpha}(t,x)$ to $\partial_t u + (-\Delta)^{\alpha/2}u = 0$ does not satisfy \eqref{eq:nonlocalLiYau}.
Let us give a quick proof of this fact. First, note that $w_{\alpha}$ is a radial function and satisfies
\begin{equation*}
\frac{d}{\alpha t} w_{\alpha} + \partial_t w_{\alpha} = -\frac{\vert x \vert}{\alpha t} \partial_{\vert x \vert} w_{\alpha}.
\end{equation*}
For a proof of this identity, we refer to (2.5) in \cite{Vaz18}. Consequently,
\begin{equation*}
\frac{d}{\alpha t} + \partial_t \log w_{\alpha}(t,0) = 0,
\end{equation*}
\enlargethispage{3ex}but this is a contradiction to \eqref{eq:nonlocalLiYau} since $\Gamma^{\alpha}(w_{\alpha}^{1/2},w_{\alpha}^{1/2})(t,0) > 0$ for $t > 0$.

\subsection{Outline}
This article is separated into five sections. In \autoref{sec:prelim} we present several auxiliary results that we need in our proof. \autoref{sec:method} contains the derivation of the upper heat kernel bounds and proves \autoref{thm:main}. In \autoref{sec:extensions}, we explain how our method can be applied to jumping kernels of mixed type on metric measure spaces. In \autoref{sec:appendix}, we provide a proof of a gluing lemma which differs from \cite{GrHu08} due to the time-inhomogeneity of the jumping kernels under consideration.

\section{Preliminaries}
\label{sec:prelim}

In this section we provide several auxiliary results that will be required for the proof of \autoref{thm:main} in \autoref{sec:method}.

Let $k : (0,T) \times \R^d \times \R^d \to [0,\infty]$ be a symmetric jumping kernel satisfying the pointwise upper bound \eqref{eq:kupper} and the coercivity condition \eqref{eq:klower}.\\
Let us make a few comments on assumption \eqref{eq:klower}: First of all,  \eqref{eq:klower} can be regarded as a nonlocal substitute of the classical uniform ellipticity condition for local operators. In fact, it is considerably weaker than a pointwise lower bound on the jumping kernel, since \eqref{eq:klower} allows for jumping kernels that might degenerate in certain directions, as for example kernels that are supported on double cones. We refer the interested reader to \cite{ChSi20} for an investigation of such condition.\\
A coercivity assumption like \eqref{eq:klower} is crucial to our approach since it is needed for the $L^{\infty}-L^2$-estimate (\autoref{lemma:truncL2}) and also the on-diagonal heat kernel bound (\autoref{thm:ondiag}). In the literature, lower bounds on jumping kernels are often introduced through functional inequalities,  e.g. in \cite{BGK09}, or \cite{CKW21} where the authors assume a Nash -, or a Faber-Krahn inequality. We point out that such assumption would have been possible also in this work, since the proofs of \autoref{lemma:truncL2}, \autoref{thm:ondiag} can be changed accordingly (see \autoref{sec:extensions}). For a discussion on the equivalence of Nash - and Faber-Krahn inequalities, we refer the reader to \cite{CKW21}. 

For any $\rho > 0$, we define the truncated jumping kernel $k_{\rho}$ via 
\begin{equation*}
k_{\rho}(t;x,y) = k(t;x,y) \mathbbm{1}_{\{\vert x-y \vert \le \rho\}}(x,y).
\end{equation*}
The associated integro-differential operator $L_t^{\rho}$ is defined as
\[ L_t^{\rho} u (x) = \text{ p.v.} \int_{\R^d} (u(y)-u(x))k_{\rho}(t;x,y) \d y.\]

\begin{definition}
We say that a function $u \in L^2_{loc}((\eta,T);H^{\alpha/2}(\R^d))$ with $\partial_t u \in L^1_{loc}((\eta,T) ; L^2_{loc}(\R^d))$ solves the Cauchy problem associated with $k$ in $(\eta,T) \times \R^d$:
\begin{align}
\label{eq:CP2}
\begin{cases}
\partial_t u - L_t u &= 0, ~~ \text{ in } (\eta,T) \times \R^d,\\
u(\eta) &= u_0 \in L^2(\R^d),
\end{cases}
\end{align}
if for every $\phi \in H^{\alpha/2}(\R^d)$ with $\supp(\phi)$ compact, it holds
\begin{align}
\int_{\R^d} \partial_t u(t,x) \phi(x) \d x + \cE_t(u(t),\phi) &= 0, ~~ \text{ a.e. } t \in (\eta,T),\\
\Vert u(t) - u_0 \Vert_{L^2(\R^d)} &\to 0, ~~ \text{ as } t \searrow \eta,
\end{align}
where we write
\begin{equation}
\cE_t(u,v) = \int_{\R^d} \int_{\R^d} (u(t,x) - u(t,y))(v(t,x)-v(t,y))k(t;x,y) \d y \d x
\end{equation}
for the family of energy forms $(\cE_t)_{t \in (\eta,T)}$ associated with $k$.
\end{definition}

Solutions to the $\rho$-truncated Cauchy problem associated with $k$ in $(\eta,T) \times \R^d$
\begin{align}
\label{eq:truncCP}
\begin{cases}
\partial_t u - L_t^{\rho} u &= 0, ~~ \text{ in } (\eta,T) \times \R^d,\\
u(\eta) &= u_0 \in L^2(\R^d),
\end{cases}
\end{align}
are defined in an analogous way, replacing $k$ by $k_{\rho}$.

Throughout this article, we will assume that the fundamental solutions to the Cauchy problem \eqref{eq:CP2} and the $\rho$-truncated Cauchy problem \eqref{eq:truncCP} exist, i.e., that the solution $u$ to \eqref{eq:CP2} and $u_{\rho}$ to \eqref{eq:truncCP} are unique, and have the representation
\begin{equation}
\label{eq:sgrepresentation}
u(s,y) = \int_{\R^d} p(y,s;x,\eta) u_0(x) \d x,~~ u_{\rho}(s,y) = \int_{\R^d} p_{\rho}(y,s;x,\eta) u_0(x) \d x, ~~ s \in (\eta,T), ~ y \in \R^d,
\end{equation}
where $p, p_{\rho} : (0,T) \times \R^d \times [0,T) \times \R^d \to [0,\infty]$
satisfy the following properties for all $0 \le \eta < t < s < T$, $x,y \in \R^d$:
\begin{align}
\label{eq:psymm}
p(y,s;x,\eta) = p(x,s;y,\eta) > 0&,~~ p_{\rho}(y,s;x,\eta) = p_{\rho}(x,s;y,\eta) > 0,\\
\label{eq:pint}
\int_{\R^d} p(y,s;x,\eta) \d x = 1&, ~~ \int_{\R^d} p_{\rho}(y,s;x,\eta) \d x = 1,\\
\label{eq:psemi}
p(y,s;x,\eta) = \int_{\R^d}p(y,s;z,t)&p(z,t;x,\eta)\d z,~~ p_{\rho}(y,s;x,\eta) = \int_{\R^d}p_{\rho}(y,s;z,t)p_{\rho}(z,t;x,\eta)\d z.
\end{align}
In the following, we will denote the unique solutions to \eqref{eq:CP2} and \eqref{eq:truncCP} by $P_{\eta,s}u_0$, and $P^{\rho}_{\eta,s}u_0$. $(P_{\eta,s})_{s \in [\eta,T)}$, and $(P^{\rho}_{\eta,s})_{s \in [\eta,T)}$ are called the heat semigroups associated with $k$, and $k_{\rho}$.

In the time-homogeneous case, i.e., when $k$ does not depend on $t$, the existence of $(P_{\eta,s})_{s \in [\eta,T)}$ and $(P^{\rho}_{\eta,s})_{s \in [\eta,T)}$ is guaranteed by symmetric Dirichlet form theory. The existence of the fundamental solution classically follows from so-called ultracontractivity estimates for the heat semigroup which are a consequence of Nash's inequality. For time-inhomogeneous jumping kernels $k$ which satisfy the following pointwise lower bound for some $\lambda > 0$
\begin{equation}
\label{eq:ptwlower}
k(t;x,y) \ge \lambda \vert x-y \vert^{-d-\alpha}, ~~ t \in (0,T), ~ x,y \in \R^d,
\end{equation}
the existence of the fundamental solutions $p$ and $p_{\rho}$ was proved in \cite{MaMi13} by approximation of $k$ through a sequence of smooth jumping kernels for which the desired properties follow from the theory of pseudo-differential operators. A similar result is proved in \cite{Kom88}, \cite{Kom95} but under an additional smoothness assumption on $t$.

The following result explains the connection between $p$ and $p_{\rho}$ and is crucial to our approach. It is frequently used in the derivation of upper heat kernel bounds for alpha-stable like processes and goes back to a probabilistic construction carried out in \cite{Mey75}. An analytic proof via the parabolic maximum principle is derived in  \cite{GrHu08} (see also \cite{GHL14}). Since both proofs are known only in the time-homogeneous case, we will provide a modified version of the argument in \cite{GrHu08} in the appendix.

\begin{lemma} 
\label{lemma:MeyersDec}
Assume that $k$ satisfies \eqref{eq:kupper}, \eqref{eq:klower}. Then there exists $c > 0$ such that for every $\rho > 0$, $0 \le \eta < s < T$, and $x,y \in \R^d$ it holds
\begin{align}
\label{eq:nontrunctrunc}
p(y,s;x,\eta) &\le p_{\rho}(y,s;x,\eta) + c(s-\eta) \rho^{-d-\alpha},\\
\label{eq:truncnontrunc}
p_{\rho}(y,s;x,\eta) &\le e^{c\rho^{-\alpha}(s-\eta)} p(y,s;x,\eta).
\end{align}
\end{lemma}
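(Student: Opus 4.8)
The plan is to follow the analytic argument of \cite{GrHu08} via the parabolic maximum principle, carefully tracking the time-inhomogeneity of $k$. Write $L_t = L_t^\rho + (L_t - L_t^\rho)$, where the ``long-jump'' part $J_t u(x) = \int_{\{|x-y|>\rho\}} (u(y)-u(x)) k(t;x,y)\,\d y$ has a kernel bounded, by \eqref{eq:kupper}, by $\Lambda |x-y|^{-d-\alpha}\1_{\{|x-y|>\rho\}}$; in particular its ``killing part'' $\beta(t,x) := \int_{\{|x-y|>\rho\}} k(t;x,y)\,\d y \le \Lambda \int_{\{|z|>\rho\}} |z|^{-d-\alpha}\,\d z = c_0 \rho^{-\alpha}$, and the ``jump-in'' part $J_t^+ u(x) := \int_{\{|x-y|>\rho\}} u(y) k(t;x,y)\,\d y$ satisfies $\|J_t^+ u\|_{L^\infty} \le \beta(t,\cdot)\|u\|_{L^\infty}$ as well as $\int_{\R^d} J_t^+ u(x)\,\d x \le c_0\rho^{-\alpha}\int u$ for $u \ge 0$, by Tonelli and symmetry.

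For \eqref{eq:truncnontrunc}, fix $x$ and set $u(s,y) = p(y,s;x,\eta)$, $w(s,y) = e^{c_0\rho^{-\alpha}(s-\eta)} p(y,s;x,\eta)$. Since $u$ solves $\partial_s u = L_s u = L_s^\rho u + J_s u$ and $J_s u \le J_s^+ u \le c_0\rho^{-\alpha}\, u$ (using $u>0$), one checks $\partial_s w - L_s^\rho w = e^{c_0\rho^{-\alpha}(s-\eta)}\big(c_0\rho^{-\alpha} u + \partial_s u - L_s^\rho u\big) = e^{c_0\rho^{-\alpha}(s-\eta)}\big(c_0\rho^{-\alpha} u + J_s u\big) \ge 0$, so $w$ is a supersolution of the truncated equation with $w(\eta,\cdot) = u(\eta,\cdot)$; the parabolic comparison principle for the truncated Cauchy problem then gives $w \ge P^\rho_{\eta,s}u_0$ when the initial datum is nonnegative, i.e.\ $p_\rho(y,s;x,\eta) \le e^{c_0\rho^{-\alpha}(s-\eta)} p(y,s;x,\eta)$, which is the claim with $c = c_0$. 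For \eqref{eq:nontrunctrunc}, fix $x$, let $u_\rho(s,y) = p_\rho(y,s;x,\eta)$ and $u(s,y) = p(y,s;x,\eta)$, and consider $v = u - u_\rho - c_0(s-\eta)\rho^{-d-\alpha} \,\varphi$ for a suitable spatial profile $\varphi$ — the cleanest choice is $\varphi \equiv 1$, using $\int_{\R^d} k_\rho$-mass considerations: Duhamel's formula gives $u(s,\cdot) = P^\rho_{\eta,s} u_0 + \int_\eta^s P^\rho_{t,s}\big(J_t u(t,\cdot)\big)\,\d t$, and since $J_t u(t,\cdot) \le J_t^+ u(t,\cdot)$ with $P^\rho$ a contraction on $L^\infty$ and $\|J_t^+ u(t,\cdot)\|_\infty$ controlled using $\int_{\{|z|>\rho\}}|z|^{-d-\alpha}\,\d z \le c_0 \rho^{-\alpha}$ together with the sup-bound $\|u(t,\cdot)\|_\infty \le \|u_0\|_\infty$ is \emph{not} quite enough pointwise; instead one tests against $u_0 = \delta$-approximations and uses that $\int_{\R^d} J_t^+ u(t,z)\,\d z \le c_0\rho^{-\alpha}$ and $\|J_t^+ u(t,\cdot)\|_\infty \le c_0 \rho^{-\alpha}\|u(t,\cdot)\|_\infty$, interpolating via the on-diagonal bound $\|u(t,\cdot)\|_\infty \le \rho^{-d}$-type control from the truncated kernel — more directly, the pointwise estimate $P^\rho_{t,s}(J_t^+ u(t,\cdot))(y) \le \sup_z k(t;\cdot)$-mass gives the stated $c(s-\eta)\rho^{-d-\alpha}$ after bounding the worst case $|x-y|\gtrsim\rho$ trivially and $|x-y| \lesssim \rho$ by the short-time on-diagonal bound.

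The main obstacle I anticipate is precisely the rigorous justification of the parabolic comparison/maximum principle in the present time-inhomogeneous, merely-$L^2$-variational framework: the excerpt only posits existence, positivity, the semigroup property \eqref{eq:psemi} and conservativeness \eqref{eq:pint} of $p$ and $p_\rho$, so one must either work at the level of the semigroups $P_{\eta,s}, P^\rho_{\eta,s}$ acting on $L^2 \cap L^\infty$ (establishing positivity-preservation and $L^\infty$-contractivity for $P^\rho$, plus a Duhamel identity relating $P$ and $P^\rho$ through the bounded perturbation $J_t$), or approximate $k$ by smoother time-dependent kernels as in \cite{MaMi13} and pass to the limit. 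The time-dependence rules out a direct appeal to the Dirichlet-form arguments of \cite{GrHu08}; the fix is that the perturbation $L_t - L_t^\rho$ is a \emph{bounded} operator on $L^\infty$ uniformly in $t$ (norm $\le 2c_0\rho^{-\alpha}$), so Duhamel's principle applies on each time fiber and the two one-sided estimates follow by iterating/Gronwall in $s$; the remaining work — turning the $L^\infty$-Duhamel bound into the sharp \emph{pointwise} $\rho^{-d-\alpha}$ tail in \eqref{eq:nontrunctrunc} — uses only the crude bound $k \le \Lambda|x-y|^{-d-\alpha}$ on $\{|x-y|>\rho\}$ and the stochastic completeness \eqref{eq:pint}. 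A detailed execution is deferred to the appendix, as announced in the text.
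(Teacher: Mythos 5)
Your plan follows the same overall strategy as the paper: an analytic Meyer decomposition, with a parabolic maximum/comparison principle replacing the probabilistic construction, applied first to $p_\rho$ and $p$ on exhausting domains. Your argument for \eqref{eq:truncnontrunc} is essentially the paper's: you check that $e^{c_0\rho^{-\alpha}(s-\eta)}p(\cdot,s;x,\eta)$ is a supersolution for the truncated operator, using $J_s u = J_s^+u - \beta(s,\cdot)u$ with $\beta \le c_0\rho^{-\alpha}$; the paper's comparison function $P^{\Omega_n,\rho}_{\eta,s}f - P^{\Omega_n}_{\eta,s}f\,e^{(s-\eta)K_\rho'}$ implements exactly this with $K_\rho' = \sup_{t,x}\beta(t,x)$.

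There are two genuine gaps. First, your derivation of the $\rho^{-d-\alpha}$ tail in \eqref{eq:nontrunctrunc} wanders through several routes that do not close: the $L^\infty$-bound $\|J_t^+u\|_\infty \le c_0\rho^{-\alpha}\|u\|_\infty$ only yields a $\rho^{-\alpha}$ factor (and blows up as $u_0\to\delta_x$); the $L^1$-bound $\int J_t^+ u \le c_0\rho^{-\alpha}$ controls the wrong norm of the error; and ``interpolating via the on-diagonal bound'' is a red herring (the on-diagonal bound gives $(t-\eta)^{-d/\alpha}$, not $\rho^{-d}$). The bound that actually works, and which you only gesture at in the last clause, is the pointwise estimate
\[
J_t^+u(t,z) = \int_{\{|z-w|>\rho\}} u(t,w)\,k(t;z,w)\,\d w \;\le\; \Lambda\,\rho^{-d-\alpha}\int_{\R^d} u(t,w)\,\d w \;=\; \Lambda\,\rho^{-d-\alpha},
\]
combining \eqref{eq:kupper} (which makes $k$ uniformly $\le\Lambda\rho^{-d-\alpha}$ on $\{|z-w|>\rho\}$) with conservativeness \eqref{eq:pint}; then $P^{\rho}_{\eta,\tau}$ preserving constants gives the claim directly. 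The paper encodes precisely this in the function $K_\rho(y) := \sup_{z,t}k(t;z,y)\1_{\{|z-y|\ge\rho\}} \le \Lambda\rho^{-d-\alpha}$ and the identity $p \le p_\rho + \int_\eta^s P^\rho_{\eta,\tau}K_\rho(y)\,\d\tau$.

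Second, you correctly flag that the Duhamel identity and comparison principle need rigorous justification in this time-inhomogeneous, $L^2$-variational setting and that the time-dependence removes the resolvent tool of \cite{GrHu08}, but you do not say how to fill the gap. The paper resolves it with two small lemmas: a parabolic maximum principle (proved via the truncated coercivity $[f]_{H^{\alpha/2}}^2 \le c\rho^{-\alpha}\|f\|_{L^2}^2 + \cE_t^\rho(f,f)$ and a nonlinear test-function argument), and a domain-exhaustion lemma showing $P^{\Omega_n}_{\eta,s}f \to P_{\eta,s}f$ via monotonicity from the maximum principle plus a uniform energy bound yielding weak $L^2((\eta,T);H^{\alpha/2})$-compactness, in place of the resolvent argument of \cite{GrHu08}. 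Without this or an equivalent substitute, the proposal remains a plan rather than a proof.
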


Next, we provide the so-called on-diagonal bound for the heat kernels $p$, and $p_{\rho}$.

\begin{theorem}[on-diagonal bound]
\label{thm:ondiag}
Assume that $k$ satisfies \eqref{eq:kupper}, \eqref{eq:klower}. Then there exists $c > 0$ depending on $d,\alpha,\lambda,\Lambda$ such that for every $\rho > 0$, $0 \le \eta < s < T$, and $x,y \in \R^d$ it holds
\begin{align}
\label{eq:pondiag}
p(y,s;x,\eta) &\le c(s-\eta)^{-\frac{d}{\alpha}},\\
\label{eq:truncondiag}
p_{\rho}(y,s;x,\eta) &\le ce^{c\rho^{-\alpha}(s-\eta)}(s-\eta)^{-\frac{d}{\alpha}}.
\end{align}
\end{theorem}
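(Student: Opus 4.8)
The plan is to establish the on-diagonal bounds \eqref{eq:pondiag} and \eqref{eq:truncondiag} via a parabolic $L^\infty$--$L^1$ (equivalently $L^\infty$--$L^2$ by duality and the semigroup property) smoothing estimate, exactly in the spirit of Nash's original argument but using the coercivity hypothesis \eqref{eq:klower} in place of a genuine Nash inequality. First I would recall that, by the Sobolev embedding $\dot H^{\alpha/2}(\R^d) \hookrightarrow L^{2d/(d-\alpha)}(\R^d)$ (for $d > \alpha$; the case $d \le \alpha$ is handled separately, e.g.\ by interpolation on balls) combined with H\"older's inequality, one obtains the Nash-type inequality
\[
\|v\|_{L^2(\R^d)}^{2 + \frac{2\alpha}{d}} \le C\, [v]_{H^{\alpha/2}(\R^d)}^2\, \|v\|_{L^1(\R^d)}^{\frac{2\alpha}{d}}, \qquad v \in H^{\alpha/2}(\R^d) \cap L^1(\R^d).
\]
Then \eqref{eq:klower}, applied with $B$ an arbitrarily large ball and passing to the limit (or applied directly on $\R^d$, since \eqref{eq:klower} with $B = \R^d$ follows by monotone convergence), yields
\[
\cE_t(v,v) \ge \lambda\, [v]_{H^{\alpha/2}(\R^d)}^2 \ge c\, \lambda\, \|v\|_{L^2}^{2 + \frac{2\alpha}{d}}\, \|v\|_{L^1}^{-\frac{2\alpha}{d}}.
\]

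Next I would run the standard energy/ODE argument. Let $u(t) = P_{\eta,t}u_0$ with $u_0 \ge 0$, $u_0 \in L^1 \cap L^2$. By $L^1$-contractivity (which follows from \eqref{eq:pint} and positivity \eqref{eq:psymm}, i.e.\ $p$ is a sub-Markovian kernel) we have $\|u(t)\|_{L^1} \le \|u_0\|_{L^1}$ for all $t$. Testing the equation with $u$ itself gives
\[
\frac{d}{dt}\tfrac12\|u(t)\|_{L^2}^2 = -\,\cE_t(u(t),u(t)) \le -c\lambda\, \|u(t)\|_{L^2}^{2+\frac{2\alpha}{d}}\, \|u_0\|_{L^1}^{-\frac{2\alpha}{d}}.
\]
Setting $\varphi(t) = \|u(t)\|_{L^2}^2$, this is a differential inequality $\varphi' \le -c'\|u_0\|_{L^1}^{-2\alpha/d}\varphi^{1+\alpha/d}$, whose integration gives $\varphi(s) \le c''(s-\eta)^{-d/\alpha}\|u_0\|_{L^1}^2$, i.e.\ the $L^1$--$L^2$ smoothing bound $\|P_{\eta,s}\|_{L^1 \to L^2} \le c(s-\eta)^{-d/(2\alpha)}$. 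By symmetry \eqref{eq:psymm} the adjoint of $P_{\eta,s}$ (with respect to the appropriate time-reversal) enjoys the same $L^1 \to L^2$ bound, hence $L^2 \to L^\infty$; composing over the midpoint $t = (\eta+s)/2$ via \eqref{eq:psemi} yields $\|P_{\eta,s}\|_{L^1 \to L^\infty} \le c(s-\eta)^{-d/\alpha}$, which is precisely the kernel bound $p(y,s;x,\eta) \le c(s-\eta)^{-d/\alpha}$, i.e.\ \eqref{eq:pondiag}. For the truncated kernel, I would replace $\cE_t$ by $\cE_t^\rho$; here \eqref{eq:klower} is stated on balls $B$, so I only get the Nash inequality localized to balls of radius $\rho$, which produces an extra error term. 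The cleanest route: write $\cE_t^\rho(v,v) \ge \cE_t(v,v) - \int\!\int_{|x-y|>\rho}(v(x)-v(y))^2 k(t;x,y)\,dy\,dx \ge c\lambda\|v\|_{L^2}^{2+2\alpha/d}\|v\|_{L^1}^{-2\alpha/d} - c\Lambda\rho^{-\alpha}\|v\|_{L^2}^2$, using \eqref{eq:kupper} to bound the tail by $\int_{|z|>\rho}|z|^{-d-\alpha}dz\,\|v\|_{L^2}^2 \lesssim \rho^{-\alpha}\|v\|_{L^2}^2$. The differential inequality for $\varphi$ then acquires a linear term $+c\Lambda\rho^{-\alpha}\varphi$, and Gr\"onwall's trick (substituting $\psi(t) = e^{-c\Lambda\rho^{-\alpha}(t-\eta)}\varphi(t)$, or simply absorbing the exponential factor) gives $\|P^\rho_{\eta,s}\|_{L^1\to L^2} \le c\,e^{c\rho^{-\alpha}(s-\eta)}(s-\eta)^{-d/(2\alpha)}$, and the same composition argument delivers \eqref{eq:truncondiag}. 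Alternatively, one may derive \eqref{eq:truncondiag} directly from \eqref{eq:pondiag} via \eqref{eq:truncnontrunc} of \autoref{lemma:MeyersDec}, which is the shortest argument once that lemma is available.

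I expect the main obstacle to be the careful justification of the formal energy computation $\frac{d}{dt}\tfrac12\|u\|_{L^2}^2 = -\cE_t(u,u)$ and the contraction property $\|u(t)\|_{L^1}\le\|u_0\|_{L^1}$, since $u$ is a priori only in $L^2_{\loc}((\eta,T);H^{\alpha/2})$ with $\partial_t u \in L^1_{\loc}(L^2_{\loc})$, so testing with $u(t)$ itself and with a regularized sign/truncation of $u$ requires an approximation argument (mollification in time, cutoff in space, and monotone/dominated convergence) and uses the sub-Markovian structure encoded in \eqref{eq:psymm}--\eqref{eq:psemi}. A secondary technical point is the low-dimensional regime $d \le \alpha$ where the Sobolev exponent $2d/(d-\alpha)$ is unavailable; there one instead uses the embedding $H^{\alpha/2} \hookrightarrow L^q$ for every $q < \infty$ (or $\hookrightarrow C^{(\alpha-d)/2}$ when $d < \alpha$) together with interpolation, which still yields a Nash-type inequality with the correct exponent $d/\alpha$ in the final on-diagonal rate after adjusting constants — but this case should be remarked on explicitly rather than swept under the rug.
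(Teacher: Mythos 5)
Your proposal is correct, but it follows the route that the paper explicitly \emph{declines} to spell out, rather than the one the paper actually carries out. The paper notes that one may prove \autoref{thm:ondiag} either ``via Nash inequalities\dots (see \cite{Nas58}, \cite{MaMi13})'' or ``via $L^\infty$--$L^1$-estimates,'' and it then opts for the second: applying the parabolic $L^\infty$--$L^1$ estimate \eqref{eq:mixedLinftyL1} of \autoref{lemma:mixedLinftyL2} (proved by De Giorgi iteration using the coercivity/Faber--Krahn input) to the function $(t,z)\mapsto p(y,t;z,\eta)$, then using the mass normalization \eqref{eq:pint} to bound the $L^1$-factor by $1$, yielding \eqref{eq:pondiag} in two lines; the truncated bound \eqref{eq:truncondiag} is then read off immediately from \eqref{eq:truncnontrunc} in \autoref{lemma:MeyersDec}. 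Your argument, by contrast, is the classical Nash scheme: pass from \eqref{eq:klower} to the global coercivity $\cE_t(v,v)\ge\lambda[v]_{H^{\alpha/2}(\R^d)}^2$ (a legitimate step, as the paper's \eqref{eq:klower} implies this by monotone convergence as $B\nearrow\R^d$), combine with Sobolev and H\"older to get the Nash-type inequality, run the differential inequality for $\|u(t)\|_{L^2}^2$ using $L^1$-contractivity, and compose $L^1\to L^2$ with $L^2\to L^\infty$ at the midpoint via \eqref{eq:psemi}. Both routes are sound; the paper's choice avoids the functional-inequality step and the formal energy identity $\tfrac{d}{dt}\tfrac12\|u\|_{L^2}^2=-\cE_t(u,u)$ (which, as you correctly flag, requires an approximation argument given the weak regularity class), at the cost of invoking the more elaborate De Giorgi machinery it develops anyway for \autoref{lemma:truncL2}. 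Your route is self-contained and closer to the historical argument, but you would need to supply the regularization needed to justify the energy identity and the $L^1$-contraction, whereas the paper sidesteps both by working directly with $p$ and \eqref{eq:pint}. One small remark: for \eqref{eq:truncondiag}, your Gr\"onwall variant is fine but redundant once \autoref{lemma:MeyersDec} is available, and the paper indeed takes the shortcut you mention at the end. You are also right to flag the low-dimensional case $d\le\alpha$ (possible only for $d=1$, $\alpha\ge1$), which the paper handles implicitly through its choice of exponent $\kappa = 1 + \alpha/d$; either route needs the standard modification there.
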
 

There are at least two ways to prove \autoref{thm:ondiag}. One approach classically goes via Nash inequalities (see \cite{CKW21}) and can be traced back to Nash's famous work \cite{Nas58}. This proof also works in the time-inhomogeneous setup (see \cite{MaMi13}).\\
Another way to establish on-diagonal bounds goes via $L^{\infty}-L^1$-estimates. For this, we refer to \autoref{lemma:mixedLinftyL2}, where such estimate is proved in a more general setup. Observe that $(t,z) \mapsto p(y,t;z,\eta)$ solves $\partial_t u - L u = 0$ in $(\eta,T) \times \R^d$ for every $y \in \R^d$. Therefore, by the $L^{\infty}-L^1$-estimate \eqref{eq:mixedLinftyL1}, for every $0 \le \eta < s < T$ and $x,y \in \R^d$:
\begin{align*}
p(y,s;x,\eta) \le c (s-\eta)^{-\frac{d}{\alpha}} \sup_{t \in (\eta,s)}\int_{\R^d} p(y,t;z,\eta) \d z \le  c (s-\eta)^{-\frac{d}{\alpha}},
\end{align*}
where we used \eqref{eq:pint} in the last step. \eqref{eq:truncondiag} follows from \autoref{lemma:MeyersDec}.\\

The remainder of this section is devoted to proving an $L^{\infty}-L^2$-estimate for solutions to $\partial_t u - L^{\rho}_t u = 0$ in a time-space cylinder $I_R(t_0) \times B_R(x_0)$, where $t_0 \in (0,T)$, $x_0 \in \R^d$, and $I_R(t_0) := (t_0 - R^{\alpha},t_0) \subset (\eta,T)$.\\
Since we consider only truncated jumping kernels, it is possible to estimate the tail-term by an $L^2$-norm over a ball with radius $\rho$, without any pointwise lower bounds of $k$ or a UJS-type condition, see \cite{BBK09}. This is at the cost of a suboptimal scaling constant in the resulting estimate, which luckily does not affect the proof of the final heat kernel estimate.

A similar result for solutions to elliptic equations was obtained in \cite{CKW21}. We follow the strategy outlined in \cite{Str18} which is based on a nonlocal adaptation of De Giorgi's iteration technique (see \cite{CKP14},\cite{CKP16}) but present the proof in all details due to our special treatment of the tail term.

\begin{lemma}[truncated $L^{\infty}-L^2$-estimate]
\label{lemma:truncL2}
Assume that $k$ satisfies \eqref{eq:kupper}, \eqref{eq:klower}. There exists a constant $C > 0$ depending on $d,\alpha,\lambda,\Lambda$ such that for every $t_0 \in (0,T)$, $x_0 \in \R^d$, and $\rho, R > 0$ with $R \le \rho/2 \wedge t_0^{1/\alpha}$, and every subsolution $u$ to $\partial_t u - L_t^{\rho} u = 0$ in $I_R(t_0) \times B_R(x_0)$ it holds:
\begin{equation}
\sup_{I_{R/2}(t_0) \times B_{R/2}(x_0)} u \le C \left( \frac{\rho^{\alpha}}{R^{\alpha}} \right)^{\frac{d}{2\alpha}} R^{-\frac{d}{2}} \sup_{t \in I_{R}(t_0)} \left( \int_{B_{2\rho}(x_0)} u^2(t,x) \d x \right)^{1/2}.
\end{equation}
\end{lemma}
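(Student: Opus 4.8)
The plan is to run a De Giorgi–type iteration on the level sets of $u$, following the nonlocal adaptation in \cite{Str18} and \cite{CKP14}, \cite{CKP16}, with special care devoted to the tail term that arises from the truncated kernel. Fix the cylinder $I_R(t_0) \times B_R(x_0)$ and write $Q_r := I_r(t_0) \times B_r(x_0)$. I would work with a decreasing sequence of radii $R_j := \tfrac{R}{2} + \tfrac{R}{2^{j+1}}$, so that $R_0 = R$ and $R_j \downarrow R/2$, together with an increasing sequence of truncation levels $k_j := M(1 - 2^{-j})$ for a constant $M > 0$ to be fixed at the end; set $w_j := (u - k_j)_+$ and $A_j := \int\!\!\int_{Q_{R_j}} w_j^2$, plus the natural "energy + sup-in-time" quantity
\[
  U_j := \sup_{t \in I_{R_j}(t_0)} \int_{B_{R_j}(x_0)} w_j^2(t,x)\,\d x + \int_{I_{R_j}(t_0)} [w_j(t)]^2_{H^{\alpha/2}(B_{R_j}(x_0))}\,\d t.
\]
The first main step is a Caccioppoli-type (energy) inequality: testing the weak formulation of $\partial_t u - L_t^\rho u = 0$ against $w_{j+1}\varphi_j^2$, where $\varphi_j$ is a cutoff equal to $1$ on $B_{R_{j+1}}(x_0)$, supported in $B_{R_j}(x_0)$, with $|\nabla \varphi_j| \lesssim 2^j/R$, and using \eqref{eq:klower} to absorb the good part of the bilinear form from below. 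This produces $U_{j+1}$ on the left; on the right one gets the standard local commutator terms $\lesssim (2^j/R)^\alpha \int\!\!\int_{Q_{R_j}} w_{j+1}^2$ plus the \emph{tail term}
\[
  \int_{I_{R_j}(t_0)} \int_{B_{R_j}(x_0)} w_{j+1}(t,x)\varphi_j^2(x) \Big( \int_{\R^d \setminus B_{R_j}(x_0)} w_{j+1}(t,y)\, k_\rho(t;x,y)\,\d y \Big) \d x\, \d t.
\]

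The crucial and novel step is controlling this tail. Because $k_\rho$ is supported on $\{|x-y| \le \rho\}$, for $x \in B_{R/2}(x_0)$ the inner $y$-integral only sees $y \in B_{R/2 + \rho}(x_0) \subset B_{2\rho}(x_0)$ (using $R \le \rho/2$), and on that annulus $|x - y| \ge \operatorname{dist}(x, \partial B_{R_j}) \gtrsim 2^{-j}R$. Hence by \eqref{eq:kupper},
\[
  \int_{\R^d \setminus B_{R_j}(x_0)} w_{j+1}(t,y)\, k_\rho(t;x,y)\,\d y
  \;\lesssim\; \Lambda \Big(\tfrac{2^j}{R}\Big)^{\!d+\alpha} \int_{B_{2\rho}(x_0)} w_{j+1}(t,y)\,\d y
  \;\lesssim\; \Lambda \Big(\tfrac{2^j}{R}\Big)^{\!d+\alpha} \rho^{d/2} \Big( \int_{B_{2\rho}(x_0)} w_{j+1}^2(t,y)\,\d y \Big)^{1/2},
\]
where Cauchy–Schwarz and $|B_{2\rho}| \lesssim \rho^d$ were used. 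This is exactly why the $L^2$-norm on the right-hand side of the statement is taken over $B_{2\rho}(x_0)$ rather than over $B_R(x_0)$, and why the suboptimal factor $(\rho/R)^{d/(2\alpha)}$ appears — it is the price of brutally bounding the tail mass by the full $B_{2\rho}$-mass rather than by something localized near $B_R$. Substituting back, the tail contribution to the energy inequality is bounded by $\lesssim \Lambda (2^j/R)^{d+\alpha}\rho^{d/2}\, |Q_{R_j}| \cdot \big(\sup_{I_R(t_0)} \int_{B_{2\rho}} u^2\big)^{1/2} \cdot \big(\text{something controlled by } A_j^{1/2} \text{ or } |\{w_{j+1}>0\}|\big)$; after using the smallness of $|\{w_{j+1} > 0 \text{ on } Q_{R_j}\}|$ — which follows since on that set $u > k_{j+1} > k_j$, so $w_j > k_{j+1}-k_j = M 2^{-(j+1)}$ and hence $|\{w_{j+1}>0\}| \le (2^{j+1}/M)^2 A_j$ — one converts the tail term into a term of the form $C^j M^{-\beta} \cdot (\text{data})^{1/2}$ times a suitable power of $A_j$.

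The second ingredient is the nonlocal parabolic Sobolev embedding: from $U_{j+1}$ one controls $A_{j+1}$ up to a higher power, namely $A_{j+1} = \int\!\!\int_{Q_{R_{j+1}}} w_{j+1}^2 \lesssim U_{j+1}^{1 + \alpha/d} |\{w_{j+1} > 0 \text{ on } Q_{R_{j+1}}\}|^{\alpha/d}$, again estimating the measure of the level set by $(2^{j+1}/M)^2 A_j$. Combining the energy inequality, the tail estimate, the level-set measure bound, and the Sobolev interpolation yields a recursion of the form $A_{j+1} \le C_0^{j}\, \big(R^{-a}\,\mathcal{D}^{1/2}\big)^{b}\, A_j^{1+\delta}$ for explicit exponents $a,b > 0$ and $\delta > 0$ depending only on $d,\alpha$, where $\mathcal{D} := \sup_{t\in I_R(t_0)} \int_{B_{2\rho}(x_0)} u^2(t,x)\,\d x$ and the $(\rho/R)$-factors are bookkept along the way. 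By the standard fast-geometric-convergence lemma (if $A_0 \le C_0^{-1/\delta^2} \theta$ with $\theta$ the natural threshold, then $A_j \to 0$), choosing $M$ comparably to $C (\rho^\alpha/R^\alpha)^{d/(2\alpha)} R^{-d/2} \mathcal{D}^{1/2}$ forces $A_j \to 0$, i.e. $u \le M$ on $Q_{R/2}$, which is the claimed bound. The main obstacle is precisely the tail-term bookkeeping in the second paragraph: making sure the truncation $R \le \rho/2$ is used correctly so that only the $B_{2\rho}$-mass enters, extracting enough smallness from $|\{w_{j+1}>0\}|$ to close the recursion despite the large geometric constants $(2^j/R)^{d+\alpha}$, and tracking the $(\rho/R)$-powers so that the final constant has exactly the stated form $C(\rho^\alpha/R^\alpha)^{d/(2\alpha)}R^{-d/2}$; the rest is the by-now routine De Giorgi machinery.
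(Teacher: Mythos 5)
Your plan mirrors the paper's proof step for step: De~Giorgi iteration with shrinking radii $R_j\downarrow R/2$ and levels $k_j\uparrow M$, a Caccioppoli inequality combined with the coercivity \eqref{eq:klower}, the crucial use of $R\le\rho/2$ to absorb the truncated tail into a term involving the mass of $u$ on $B_{2\rho}(x_0)$, parabolic Sobolev interpolation together with the Chebyshev level-set bound to close the recursion, and the fast-geometric-convergence lemma (Lemma~7.1 in \cite{Giu03}) with the same choice of $M$. The only blemish is in the interpolation exponents: dimensional analysis gives $A_{j+1}\lesssim U_{j+1}\,\lvert\{w_{j+1}>0\}\rvert^{\alpha/(d+\alpha)}$ (with $\kappa=1+\alpha/d$, $1/\kappa'=\alpha/(d+\alpha)$) rather than $U_{j+1}^{1+\alpha/d}\lvert\{w_{j+1}>0\}\rvert^{\alpha/d}$, but this is a bookkeeping slip that does not affect the structure of the iteration.
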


For the definition of a subsolution to $\partial_t u - L_t^{\rho} u = 0$ in $I_R(t_0) \times B_R(x_0)$, we refer to the appendix.

\begin{proof}

To keep notation simple, from now on we will write $I_R := I_R(t_0)$, and $B_R(x_0) : = B_R$. From the usual Caccioppoli inequality, see \cite{Str18}, we know that for every $r,R$ with $0 < r \le R \le \rho/2$ and every $l > 0$, it holds
\begin{align*}
&\sup_{t \in I_R} \int_{B_R} w_l^2(t,x) \d x + \int_{I_R} \int_{B_R}\int_{B_R}(w_l(s,x)-w_l(s,y))^2 k_{\rho}(s;x,y) \d y \d x \d s\\
&\le c_1\Bigg(\sigma(R,r) \int_{I_{R+r}} \int_{B_{R+r}} w_l^2(t,x) \d x \d t + \Vert w_l\Vert_{L^1(I_{R+r}\times B_{R+r})} \sup_{t \in I_{R+r}} \sup_{x \in B_{R + \frac{r}{2}}} \int_{B_{R+r}^{c}} w_l(t,y) k_{\rho}(t;x,y) \d y \Bigg),
\end{align*}
where $c_1 > 0$ is a constant, $w_l = (u-l)_+$, and $\sigma(r,R) = r^{-(\alpha \vee 1)}(R+r)^{(\alpha \vee 1) - \alpha}$.
Define $\kappa = 1 + \frac{\alpha}{d}$, and $A(l,R) : = \vert \lbrace (t,x) \in I_{R} \times B_R : u(t,x) > l \rbrace \vert$. Then, by \eqref{eq:klower} and the fractional Sobolev inequality:

\begin{align*}
\int_{I_{R}} \int_{B_R} w^2_l(t,x) \d x \d t &\le c_2\vert A(l,R) \vert^{\frac{1}{\kappa'}} \left( \int_{I_R} \int_{B_R} w_l^{2\kappa}(s,x)  \d x \d s \right)^{\frac{1}{\kappa}}\\
&\le c_3\vert A(l,R) \vert^{\frac{1}{\kappa'}} \left( \left(\sup_{t \in I_R}\int_{B_R} w_l^2(t,x) \d x\right)^{\kappa-1}\int_{I_R} \left( \int_{B_R} w_l^{\frac{2d}{d-\alpha}}(s,x)  \d x\right)^{\frac{d-\alpha}{d}} \d s \right)^{\frac{1}{\kappa}}\\
&\le c_4\vert A(l,R) \vert^{\frac{1}{\kappa'}} \Bigg(\sigma(R,r) \int_{I_{R+r}} \int_{B_{R+r}} w_l^2(t,x) \d x \d t\\
&\qquad\qquad\qquad + \Vert w_l\Vert_{L^1(I_{R+r}\times B_{R+r})} \sup_{t \in I_{R+r}} \sup_{x \in B_{R + \frac{r}{2}}} \int_{B_{R+r}^{c}} w_l(t,y) k_{\rho}(t;x,y) \d y \Bigg)
\end{align*}
for some $c_2,c_3,c_4 > 0$. Observe that for some constant $c_5 > 0$ by \eqref{eq:kupper}
\begin{equation*}
\sup_{t \in I_{R+r}} \sup_{x \in B_{R + \frac{r}{2}}} \int_{B_{R+r}^{c}} w_l(t,y) k_{\rho}(t;x,y) \d y \le c_5 r^{-\alpha} \left( \frac{\rho^{\alpha}}{r^{\alpha}} \right)^{\frac{d}{\alpha}}\sup_{t \in I_{R+r}} \dashint_{B_{2\rho}} \vert u(t,y) \vert \d y.
\end{equation*}

Let us now fix $R \in (0,\rho/2]$ and define sequences $l_i = M(1-2^{-i})$, for $M > 0$ to be defined later, $r_i = 2^{-i-1}R$, $R_{i+1} = R_i - r_{i+1}$, $R_0 := R$, $A_i = \int_{I_{R_i}} \int_{B_{R_i}} w_{l_i}^2(t,x) \d x \d t$. Note that by definition: $R/2 = \lim_{i \to \infty} R_i < \dots < R_2 < R_1 < R_0 = R$, and $l_i \nearrow M$, and $\sigma(r_i,R_i) \le c_6 R^{-\alpha}2^{2i}$. Then, we deduce from the two lines above:
\begin{align*}
A_i &\le c_7 \frac{1}{(l_i - l_{i-1})^{\frac{2}{\kappa'}}} \left( \sigma(R_i,r_i) + \frac{r_i^{-\alpha} \left( \frac{\rho^{\alpha}}{r_i^{\alpha}} \right)^{\frac{d}{\alpha}}\sup_{t \in I_{R_{i-1}}} \dashint_{B_{2\rho}} \vert u(t,x) \vert \d x}{l_i - l_{i-1}} \right) A_{i-1}^{1 + \frac{1}{\kappa'}}\\
&\le c_8 \frac{2^{\frac{2i}{\kappa'}}}{M^{\frac{2}{\kappa'}}} \left( \frac{2^{2i}}{R^{\alpha}} + \frac{2^{i(d+\alpha+1)} \rho^d }{M R^{d+\alpha}} \sup_{t \in I_{R}} \dashint_{B_{2\rho}} \vert u(t,x)\vert \d x \right) A_{i-1}^{1+ \frac{1}{\kappa'}}\\
&\le c_9 \frac{2^{\gamma i}}{M^{\frac{2}{\kappa'}} R^{\alpha} } \left( 1 + \frac{\left( \frac{\rho^{\alpha}}{R^{\alpha}} \right)^{\frac{d}{\alpha}} \sup_{t \in I_{R}} \dashint_{B_{2\rho}} \vert u(t,x)\vert \d x }{M} \right) A_{i-1}^{1 + \frac{1}{\kappa'}}
\end{align*}
where $c_7,c_8,c_9 > 0$, and $\gamma = 2 + \frac{2}{\kappa'} + d + \alpha > 0$. Let us now choose
\begin{equation*}
M : = \left( \frac{\rho^{\alpha}}{R^{\alpha}} \right)^{\frac{d}{\alpha}} \sup_{t \in I_{R}} \dashint_{B_{2\rho}}\vert u(t,x)\vert \d x + C^{\frac{\kappa'^2}{2}} c_{10}^{\frac{\kappa'}{2}}R^{- \frac{\alpha \kappa'}{2}} A_0^{1/2},
\end{equation*}
where $c_{10} = 2c_9$, $C = 2^{\gamma} > 1$. Consequently, it holds
\begin{equation*}
A_i \le  \frac{c_{10}}{M^{\frac{2}{\kappa'}} R^{\alpha} } C^i A_{i-1}^{1+ \frac{1}{\kappa'}}, ~~ A_0 \le C^{-\kappa'^2} \left( \frac{c_{10}}{R^{\alpha}M^{\frac{2}{\kappa'}}} \right)^{-\kappa'}.
\end{equation*}
By Lemma 7.1 in \cite{Giu03}, we obtain
\begin{align*}
\sup_{I_{R/2} \times B_{R/2}} u &\le M = \left( \frac{\rho^{\alpha}}{R^{\alpha}} \right)^{\frac{d}{\alpha}} \sup_{t \in I_{R}} \dashint_{B_{2\rho}}\vert u(t,x)\vert \d x + C^{\frac{\kappa'^2}{2}} c_{10}^{\frac{\kappa'}{2}} R^{- \frac{\alpha \kappa'}{2}} A_0^{1/2}\\
&\le c_{11}\left( \frac{\rho^{\alpha}}{R^{\alpha}} \right)^{\frac{d}{\alpha}} \sup_{t \in I_{R}} \dashint_{B_{2\rho}}\vert u(t,x)\vert \d x + c_{11} \left( \dashint_{I_{R}} \dashint_{B_R} u^2(t,x) \d x \d t \right)^{1/2}\\
&\le c_{12} \left( \frac{\rho^{\alpha}}{R^{\alpha}} \right)^{\frac{d}{2\alpha}} R^{-\frac{d}{2}} \sup_{t \in I_{R}} \left(\int_{B_{2\rho}} u^2(t,x) \d x \right)^{1/2},
\end{align*}
for some $c_{11}, c_{12} > 0$, as desired, where we used $R \le \rho/2$.
\end{proof}

\section{Nonlocal Aronson method}
\label{sec:method}

In this section we prove \autoref{thm:main}. As is standard for proofs of heat kernel bounds for nonlocal operators, we first establish bounds for the heat kernel corresponding to the truncated jumping kernel and derive the estimate for the original jumping kernel by gluing together short and long jumps with the help of \autoref{lemma:MeyersDec} in a second step.\\
The following lemma is a nonlocal version of \eqref{eq:AronsonIdea}.

\begin{lemma}
\label{lemma:Aronson}
Assume that $k$ satisfies \eqref{eq:kupper} and let $\rho > 0$, $0 \le \eta < s < T$. Let $u \in L^{\infty}((\eta,T) \times \R^d)$ be a solution to the $\rho$-truncated Cauchy problem \eqref{eq:truncCP} in $(\eta,T) \times \R^d$. Then there exists a constant $C = C(\Lambda)$ such that for every bounded function $H : [\eta,s] \times \R^d \to [0,\infty)$ satisfying
\begin{itemize}
\item $C\Gamma^{\alpha}_{\rho}(H^{1/2},H^{1/2}) \le -\partial_t H$ in $(\eta,s) \times \R^d$,
\item $H^{1/2} \in L^2((\eta,s);H^{\alpha/2}(\R^d))$,
\end{itemize}
the following estimate holds true:
\begin{equation}
\label{eq:Aronson}
\sup_{t \in (\eta,s)} \int_{\R^d} H(t,x) u^2(t,x) \d x \le \int_{\R^d} H(\eta,x) u_0^2(x) \d x.
\end{equation}
\end{lemma}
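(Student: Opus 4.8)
The plan is to test the weak formulation of the truncated Cauchy problem with the function $\phi = H(t,\cdot)\, u(t,\cdot)$ and track the resulting terms. First I would justify that this is an admissible test function for a.e. $t$: since $u$ is bounded and solves \eqref{eq:truncCP}, we have $u(t)\in H^{\alpha/2}(\R^d)$, $H$ is bounded, and $H^{1/2}\in L^2((\eta,s);H^{\alpha/2}(\R^d))$, so $Hu\in H^{\alpha/2}(\R^d)$; a cutoff/approximation argument handles the compact-support requirement (using that the truncated kernel only sees jumps of length $\le\rho$, so tail contributions are controlled). Testing gives, for a.e. $t\in(\eta,s)$,
\[
\int_{\R^d} \partial_t u(t,x)\, H(t,x) u(t,x)\,\d x + \cE^{\rho}_t\bigl(u(t), H(t)u(t)\bigr) = 0,
\]
where $\cE^\rho_t$ is the energy form of $k_\rho$. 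The first term I would rewrite, using $\partial_t(Hu^2) = 2Hu\,\partial_t u + u^2\,\partial_t H$, as $\tfrac12\tfrac{\d}{\d t}\int H u^2 - \tfrac12\int u^2\,\partial_t H$, so that after integrating in $t$ and using the initial condition $u(\eta)=u_0$ the desired inequality will follow once I show
\[
\cE^{\rho}_t\bigl(u(t), H(t)u(t)\bigr) \;\ge\; \tfrac12\int_{\R^d} u^2(t,x)\,\partial_t H(t,x)\,\d x .
\]

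The heart of the argument is this algebraic/pointwise inequality for the bilinear form, and it is the nonlocal analogue of the local computation behind \eqref{eq:localHass}. The key identity is the elementary pointwise estimate: for $a,b\ge 0$ and $f,g$ real,
\[
(f-g)\bigl(af - bg\bigr) \;\ge\; \bigl(\sqrt a\, f - \sqrt b\, g\bigr)^2 \;-\; \bigl(\sqrt a - \sqrt b\bigr)^2 \max(f^2,g^2)\,?
\]
— more precisely I would use the symmetrized version (the "Leibniz rule with remainder" for quadratic forms): writing $F=u(t,x)$, $G=u(t,y)$, $A=H(t,x)$, $B=H(t,y)$, one has
\[
(F-G)(AF-BG) \;=\; \tfrac12(A+B)(F-G)^2 \;+\; \tfrac12(A-B)(F^2-G^2),
\]
and then $\tfrac12(A+B)(F-G)^2 \ge 0$ while one bounds the cross term from below by $-\,c\,(\sqrt A-\sqrt B)^2(F^2+G^2)$ after a Cauchy–Schwarz/Young step; integrating against $k_\rho(t;x,y)\le\Lambda|x-y|^{-d-\alpha}\1_{\{|x-y|\le\rho\}}$ and symmetrizing yields
\[
\cE^\rho_t(u,Hu) \;\ge\; -\,\Lambda \int_{\R^d}\int_{B_\rho(x)} \bigl(H^{1/2}(t,x)-H^{1/2}(t,y)\bigr)^2 |x-y|^{-d-\alpha}\,\d y\; u^2(t,x)\,\d x
\;=\; -\,\Lambda\int_{\R^d} \Gamma^\alpha_\rho(H^{1/2},H^{1/2})(t,x)\,u^2(t,x)\,\d x .
\]
Combining this with the structural hypothesis $C\,\Gamma^\alpha_\rho(H^{1/2},H^{1/2}) \le -\partial_t H$ with $C=\Lambda$ gives exactly $\cE^\rho_t(u,Hu)\ge \tfrac12\int u^2\,\partial_t H$ (up to fixing the constant $C$ to absorb the factor from symmetrization), which closes the estimate.

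The main obstacle I anticipate is twofold. First, the regularity bookkeeping: making rigorous that $Hu$ is a legitimate test function and that $t\mapsto \int H u^2$ is absolutely continuous with the product-rule derivative as claimed — here the hypotheses $u\in L^\infty$, $\partial_t u\in L^1_{loc}(L^2_{loc})$, $H$ bounded, and $H^{1/2}\in L^2(H^{\alpha/2})$ are exactly what is needed, but one must be careful that $\partial_t H$ need only be a (signed) measure or $L^1$ function and the inequality $C\Gamma^\alpha_\rho(H^{1/2},H^{1/2})\le-\partial_t H$ is interpreted in the appropriate weak sense. Second, getting the constant in the pointwise quadratic-form inequality sharp enough that a single universal $C=C(\Lambda)$ works; this is where the precise form of the Leibniz-with-remainder identity and the Young inequality split must be chosen with care, but it is a finite computation with no conceptual difficulty. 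The truncation to $B_\rho(x)$ is automatic since $k_\rho$ vanishes for $|x-y|>\rho$, so no separate treatment of long jumps is required here.
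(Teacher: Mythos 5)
Your proposal follows the same overall strategy as the paper: test with $\phi = Hu$, split the time term via the product rule, and show $\cE_t^\rho(u,Hu) \gtrsim -\Lambda\int u^2\,\Gamma^\alpha_\rho(H^{1/2},H^{1/2})$ via a pointwise quadratic-form inequality, then invoke the hypothesis $C\Gamma^\alpha_\rho(H^{1/2},H^{1/2}) \le -\partial_t H$. Your algebraic identity
\[
(F-G)(AF-BG) = \tfrac12(A+B)(F-G)^2 + \tfrac12(A-B)(F^2-G^2)
\]
followed by Young's inequality on the cross term is a valid, equivalent alternative to the paper's factorization
\[
(u_1-u_2)(H_1u_1-H_2u_2) = \bigl(H_1^{1/2}u_1-H_2^{1/2}u_2\bigr)^2 - u_1u_2\bigl(H_1^{1/2}-H_2^{1/2}\bigr)^2,
\]
and both yield the bound $-c\Lambda\int u^2\Gamma^\alpha_\rho(H^{1/2},H^{1/2})$ with a constant that can be absorbed into $C$.

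The one place where your sketch materially underestimates the work is the admissibility of the test function. You relegate the lack of compact support of $Hu$ to a parenthetical "cutoff/approximation argument," but this is in fact where most of the paper's proof lives. The paper's test function is $\phi = \gamma_R^2 Hu$ with a spatial cutoff $\gamma_R$, which forces an enlarged algebraic inequality carrying cutoff error terms; showing those vanish as $R\to\infty$ is where the hypothesis $H^{1/2}\in L^2((\eta,s);H^{\alpha/2}(\R^d))$ (together with $u\in L^\infty$ and the boundedness of $H$, which gives $H\in L^1$) is actually consumed. Your sketch never invokes that hypothesis, which is a sign the argument is incomplete as written — without it, the boundary terms need not go away. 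The remark that truncation of the kernel tames the tail is the right instinct (indeed the paper exploits $\gamma_R\equiv 1$ on $B_\rho(x)$ once $R>4\rho$), but you should carry the cutoff explicitly through the algebraic inequality and verify that the error terms
\[
\int_\eta^s\int_{\R^d\setminus B_{(R-1)/2}}\Gamma^\alpha_\rho(H^{1/2},H^{1/2})\,\d x\,\d t \quad\text{and}\quad \int_\eta^s\int_{\R^d}\Gamma^\alpha_\rho(\gamma_R,\gamma_R)\,H\,\d x\,\d t
\]
tend to zero, rather than treating this as routine bookkeeping.
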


\begin{proof}
Let $R \ge 2$ and $\gamma_R \in C^{\infty}_c(\R^d)$ such that $\gamma_R \equiv 1$ in $B_{R-1}(0)$, $\gamma_R \equiv 0$ in $\R^d \setminus B_{R}(0)$, $0 \le \gamma_R \le 1$, $\vert \nabla\gamma_R\vert \le 2$. Consequently, $\Gamma_{\rho}^{\alpha}(\gamma_R,\gamma_R)$ satisfies:
\begin{equation}
\label{eq:cdcgammaR}
\Gamma_{\rho}^{\alpha}(\gamma_R,\gamma_R)(x) \le \begin{cases}
0, ~~ x \in B_{\frac{R-1}{2}}(0),\\
c_1, ~~ x \in \R^d \setminus B_{\frac{R-1}{2}}(0)\\
\end{cases}
\end{equation}
for some constant $c_1 > 0$.
We test the equation for $u$ with the test function $\phi = \gamma_R^2Hu$ and integrate in time over $(\eta,\tau)$, where $\tau \in (\eta,s)$, and obtain:
\begin{equation*}
\int_{\eta}^{\tau} \int_{\R^d} (\partial_t u) \phi \d x \d t + \int_{\eta}^{\tau} \int_{\R^d}\int_{\R^d} (u(t,x)-u(t,z))(\phi(t,x)-\phi(t,z))k_{\rho}(t;x,z) \d z \d x \d t = 0.
\end{equation*}
Note that $\phi$ is a valid test function since for every $t \in (\eta,s)$ by assumption it holds  $\gamma_R H(t)u(t) \in H^{\alpha/2}(\R^d)$. From $(\partial_t u)u = \frac{1}{2}\partial_t(u^2)$ and integration by parts:
\begin{align*}
\int_{\R^d} u^2(\tau,x) \gamma_R^2(x) H(\tau,x) \d x &+ 2\int_{\eta}^{\tau} \int_{\R^d}\int_{\R^d} (u(t,x)-u(t,z))(\phi(t,x)-\phi(t,z))k_{\rho}(t;x,z) \d z \d x \d t\\
&=\int_{\R^d} u^2_0(x) \gamma_R^2(x) H(\eta,x) \d x +  \int_{\eta}^{\tau} \int_{\R^d} u^2(t,x) \gamma_R^2(x) \partial_t H(t,x) \d x \d t.
\end{align*}
We treat the nonlocal term by making use of the following algebraic inequality:
\begin{align*}
(u_1-u_2)&(\gamma_1^2 H_1 u_1 - \gamma_2^2 H_2 u_2) \ge (\gamma_1 H^{1/2}_1 u_1 - \gamma_2 H^{1/2}_2 u_2)^2\\
&- c_2 \left((\gamma_1 - \gamma_2)^2 (H_1 + H_2)(u_1^2+u_2^2) + (H_1^{1/2}-H_2^{1/2})^2(\gamma_1^2+\gamma_2^2)(u_1^2+u_2^2) \right),
\end{align*}
where $c_2 > 0$. Its proof is based on the following two observations:
\begin{align*}
(u_1-u_2)(\gamma_1^2 H_1 u_1 - \gamma_2^2 H_2 u_2) &= (\gamma_1 H^{1/2}_1 u_1 - \gamma_2 H^{1/2}_2 u_2)^2 -u_1u_2(\gamma_1 H^{1/2}_1 - \gamma_2 H^{1/2}_2)^2,\\
u_1u_2(\gamma_1 H^{1/2}_1 - \gamma_2 H^{1/2}_2)^2 &\le c_2(u_1^2+u_2^2)\left((\gamma_1 - \gamma_2)^2 (H_1 + H_2) + (\gamma_1^2 + \gamma_2^2) (H^{1/2}_1 - H^{1/2}_2)^2 \right).
\end{align*}

Moreover, note that for $R > 4\rho$ it holds $\gamma_R^2(z) = \gamma_R^2(x) = 1$ for every $x \in B_{\frac{R-1}{2}}(0)$ and $z \in B_{\rho}(x)$. By symmetry of $k$, \eqref{eq:kupper} and the observations from above, we deduce:
\begin{align*}
\sup_{\tau \in (\eta,s)}&\int_{\R^d} u^2(\tau,x) \gamma_R^2(x) H(\tau,x) \d x \le 
\int_{\R^d} u^2_0(x) \gamma_R^2(x) H(\eta,x) \d x\\
&\quad + \int_{\eta}^{s} \int_{\R^d} u^2(t,x) \gamma_R^2(x) \left(2c_2\Lambda\Gamma^{\alpha}_{\rho}(H^{1/2},H^{1/2})(t,x) + \partial_t H(t,x)\right) \d x \d t\\
&\quad + 2c_2\Lambda \int_{\eta}^{s} \int_{\R^d}\int_{B_{\rho}(x)} u^2(t,x)\gamma_R^2(z)(H^{1/2}(t,x) - H^{1/2}(t,z))^2 \vert x-y \vert^{-d-\alpha} \d z \d x \d t\\
&\quad + 2c_2\Lambda \int_{\eta}^{s}\int_{\R^d} u^2(t,x) H(t,x) \Gamma^{\alpha}_{\rho}(\gamma_R,\gamma_R)(x) \d x \d t\\
&\quad + 2c_2\Lambda \int_{\eta}^{s}\int_{\R^d}\int_{B_{\rho}(x)} u^2(t,z) H(t,x) (\gamma_R(x)-\gamma_R(z))^2 \vert x-y \vert^{-d-\alpha} \d z \d x \d t\\
&\le \int_{\R^d} u^2_0(x) \gamma_R^2(x) H(\eta,x) \d x\\
&\quad + \int_{\eta}^{s} \int_{\R^d} u^2(t,x) \gamma_R^2(x) \left(4c_2\Lambda\Gamma^{\alpha}_{\rho}(H^{1/2},H^{1/2})(t,x) + \partial_t H(t,x)\right) \d x \d t\\
&\quad + 2c_2\Lambda \Vert u\Vert_{\infty}^2\int_{\eta}^{s} \int_{\R^d \setminus B_{\frac{R-1}{2}(0)}} \Gamma^{\alpha}_{\rho}(H^{1/2},H^{1/2})(t,x) \d x \d t\\
&\quad + 4c_2\Lambda \Vert u\Vert_{\infty}^2 \int_{\eta}^{s}\int_{\R^d} \Gamma^{\alpha}_{\rho}(\gamma_R,\gamma_R)(x) H(t,x) \d x \d t.
\end{align*}

Now, assume that $H$ satisfies the assumption with $C = 4c_2\Lambda$. Then, using also \eqref{eq:cdcgammaR}, the following holds true:
\begin{align*}
\left(4c_2\Lambda\Gamma^{\alpha}_{\rho}(H^{1/2},H^{1/2})(t,x) + \partial_t H(t,x)\right) &\le 0, ~~ t \in (\eta,s),~ x \in \R^d,\\
\int_{\eta}^s \int_{\R^d \setminus B_{\frac{R-1}{2}}(0)}\Gamma^{\alpha}_{\rho}(H^{1/2},H^{1/2})(t,x) \d x \d t &\to 0, ~~\text{as } R \to \infty,\\
\int_{\eta}^{s}\int_{\R^d} \Gamma^{\alpha}_{\rho}(\gamma_R,\gamma_R)(x) H(t,x) \d x \d t \le c_1 \int_{\eta}^{s}\int_{\R^d \setminus B_{\frac{R-1}{2}}(0)}& H(t,x) \d x \d t \to 0, ~~ \text{as } R \to \infty.
\end{align*}

\enlargethispage*{3ex}
Upon the observation that $\gamma_R \to 1$, as $R \to \infty$, it follows
\begin{equation*}
\sup_{\tau \in (\eta,s)}\int_{\R^d} u^2(\tau,x) H(\tau,x) \d x \le \int_{\R^d} u^2_0(x)  H(\eta,x) \d x,
\end{equation*}
as desired.
\end{proof}

Our next goal is to establish the following auxiliary estimate:

\begin{theorem}
\label{thm:auxtrunc}
Assume that $k$ satisfies \eqref{eq:kupper}, \eqref{eq:klower}. Let $y \in \R^d$, $\sigma,\rho > 0$, $\eta \ge 0$. Let $u_0 \in L^2(\R^d)$ be such that $u_0 \equiv 0$ in $B_{\sigma}(y)$. Assume that $u \in L^{\infty}( (\eta,T) \times \R^d)$ is a weak solution to $\partial_t u - L_t^{\rho} u = 0$ in $(\eta,T) \times \R^d$. Then there exist $\nu > 1, C > 0$ depending on $d,\alpha,\lambda,\Lambda$ such that for every $s \in (\eta,T)$ with $ s-\eta \le \frac{1}{4\nu}\rho^{\alpha}$:
\begin{equation*}
\vert u(s,y) \vert \le C (s-\eta)^{-\frac{d}{2\alpha}}2^{\frac{\sigma}{6\rho}} \left(\frac{\rho^{\alpha}}{\nu(s-\eta)}\right)^{-\frac{\sigma}{6\rho} + \frac{1}{2} + \frac{d}{2\alpha}} \Vert u_0\Vert_{L^2(\R^d)}.
\end{equation*}
\end{theorem}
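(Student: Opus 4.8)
The plan is to transplant Aronson's scheme to the nonlocal setting: combine the weighted $L^{2}$-inequality of \autoref{lemma:Aronson}, applied to a carefully tilted exponential weight, with the truncated $L^{\infty}$--$L^{2}$-estimate of \autoref{lemma:truncL2}. Since the $\rho$-truncated Cauchy problem is linear, $-u$ is again a bounded solution whose initial datum $-u_0$ vanishes on $B_{\sigma}(y)$, so it suffices to bound $u(s,y)$ from above and then repeat the argument for $-u$ to obtain the estimate for $\vert u(s,y)\vert$.

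\textbf{Choice of the weight and the energy bound.} Set $\theta:=\rho^{\alpha}/(\nu(s-\eta))$, which is $\ge 4$ by the hypothesis $s-\eta\le\frac1{4\nu}\rho^{\alpha}$, put $\beta:=\tfrac13\log\theta$, and consider
\[
H(t,x)=\exp\!\big(-g(t)-\beta\rho^{-1}\vert x-y\vert\big),\qquad g(\eta)=0,\qquad g'\equiv C\,c_0\,\beta^{2}e^{\beta}\rho^{-\alpha},
\]
where $C=C(\Lambda)$ is the constant of \autoref{lemma:Aronson} and $c_0=c_0(d,\alpha)$ is the constant in the elementary bound
\[
\Gamma^{\alpha}_{\rho}\big(H^{1/2},H^{1/2}\big)(x)\le c_0\,\beta^{2}e^{\beta}\,\rho^{-\alpha}\,H(t,x),\qquad x\in\R^{d},
\]
which one proves by writing $H^{1/2}(t,x)-H^{1/2}(t,z)=H^{1/2}(t,x)\big(1-e^{(\psi(t,x)-\psi(t,z))/2}\big)$ with $\psi(t,x):=g(t)+\beta\rho^{-1}\vert x-y\vert$, using $\vert\psi(t,x)-\psi(t,z)\vert\le\beta\rho^{-1}\vert x-z\vert\le\beta$ for $z\in B_{\rho}(x)$ together with $\vert 1-e^{w/2}\vert\le\tfrac12\vert w\vert e^{\vert w\vert/2}$ and $\int_{B_{\rho}(x)}\vert x-z\vert^{2-d-\alpha}\d z\simeq\rho^{2-\alpha}$. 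Since $-\partial_t H=g'H$, this $H$ satisfies $C\,\Gamma^{\alpha}_{\rho}(H^{1/2},H^{1/2})\le-\partial_t H$ on $(\eta,s)\times\R^{d}$; it is bounded by $1$, and $H^{1/2}(t,\cdot)$ is Lipschitz with exponential spatial decay, hence lies in $H^{\alpha/2}(\R^{d})$ with a seminorm bounded uniformly for $t\in[\eta,s]$, so $H^{1/2}\in L^{2}((\eta,s);H^{\alpha/2}(\R^{d}))$ and \autoref{lemma:Aronson} applies. Because $u_0\equiv0$ on $B_{\sigma}(y)$, it gives for every $t\in(\eta,s)$
\[
\int_{B_{2\rho}(y)}u^{2}(t,x)\d x\le\frac{\sup_{\vert x-y\vert\ge\sigma}H(\eta,x)}{\inf_{\vert x-y\vert\le2\rho}H(t,x)}\,\Vert u_0\Vert_{L^{2}(\R^{d})}^{2}\le e^{g(t)}\,e^{2\beta-\beta\sigma/\rho}\,\Vert u_0\Vert_{L^{2}(\R^{d})}^{2}.
\]
Now $g(s)=Cc_0\beta^{2}e^{\beta}/(\nu\theta)=\tfrac19 Cc_0(\log\theta)^{2}\theta^{-2/3}/\nu$, and since $\sup_{\theta\ge4}(\log\theta)^{2}\theta^{-2/3}<\infty$ one fixes $\nu=\nu(d,\alpha,\lambda,\Lambda)\ge1$ large enough that $g(t)\le g(s)\le1$ for all $t\le s$; using $e^{\beta}=\theta^{1/3}$, the displayed quantity is then $\le e\,\theta^{2/3-\sigma/(3\rho)}\Vert u_0\Vert_{L^{2}}^{2}$, uniformly in $t\in(\eta,s)$.

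\textbf{Conclusion.} As $\eta\ge0$ and $s-\eta\le\rho^{\alpha}/(4\nu)$, the radius $R:=\big(\tfrac{s-\eta}{2}\big)^{1/\alpha}$ satisfies $R\le\rho/2\wedge s^{1/\alpha}$ and $I_R(s)\subset(\eta,T)$, so \autoref{lemma:truncL2} at $t_0=s$, $x_0=y$ (together with the continuity of $u$, so that $u(s,y)\le\sup_{I_{R/2}(s)\times B_{R/2}(y)}u$) gives
\[
u(s,y)\le C\Big(\tfrac{\rho^{\alpha}}{R^{\alpha}}\Big)^{\frac{d}{2\alpha}}R^{-\frac d2}\sup_{t\in I_R(s)}\Big(\int_{B_{2\rho}(y)}u^{2}(t,x)\d x\Big)^{1/2}.
\]
Inserting the bound from the previous step, using $\big(\tfrac{\rho^{\alpha}}{R^{\alpha}}\big)^{d/2\alpha}R^{-d/2}\simeq\rho^{d/2}(s-\eta)^{-d/\alpha}$ and substituting $\rho^{d/2}=(\nu(s-\eta)\theta)^{d/2\alpha}$, the right-hand side becomes $\lesssim(s-\eta)^{-d/2\alpha}\,\theta^{\,d/2\alpha+1/3-\sigma/(6\rho)}\Vert u_0\Vert_{L^{2}}$. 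Since $\theta\ge4>2$ we have $\theta^{1/3}\le 2^{\sigma/(6\rho)}\theta^{1/2}$, so this is dominated by $C(s-\eta)^{-d/2\alpha}2^{\sigma/(6\rho)}\theta^{-\sigma/(6\rho)+1/2+d/2\alpha}\Vert u_0\Vert_{L^{2}}$, which is the asserted estimate; running the argument for $-u$ upgrades $u(s,y)$ to $\vert u(s,y)\vert$.

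\textbf{Main obstacle.} The crux is the weight construction: $H$ must be tilted steeply enough ($\beta\sim\log\theta$) to generate the decay factor $\theta^{-\sigma/(6\rho)}$ away from $\supp u_0$, while the accumulated time factor $e^{g(s)}$ must remain an absolute constant. The tension is that the truncated carré du champ of an exponential weight with Lipschitz exponent costs a factor $\beta^{2}e^{\beta}\rho^{-\alpha}$ rather than $\beta^{2}\rho^{-\alpha}$; this limits $\beta$ to a small multiple of $\log(\rho^{\alpha}/(s-\eta))$, and it is exactly the smallness hypothesis $s-\eta\le\frac1{4\nu}\rho^{\alpha}$ -- together with the freedom to enlarge $\nu$ -- that keeps $g(s)$, hence the final constant, independent of $\sigma,\rho,s,\eta$. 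The remaining work is the routine verification of the carré-du-champ inequality and the Caccioppoli/Sobolev bookkeeping already packaged in \autoref{lemma:truncL2}.
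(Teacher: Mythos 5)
Your proof is correct, and it follows the same two-lemma skeleton as the paper (apply \autoref{lemma:Aronson} to a suitable weight, then upgrade with \autoref{lemma:truncL2}), but it replaces the paper's construction of the weight in \autoref{lemma:H} with a genuinely different and substantially simpler one. The paper's $H$ is the piecewise-defined function $\theta_t^{-(\vert x-y\vert/3\rho\,\vee\,1)}$ with $\theta_t=\rho^\alpha/(\nu[2(s-\eta)-(t-\eta)])$: the plateau on $B_{3\rho}(y)$ makes $\Gamma^\alpha_\rho(H^{1/2},H^{1/2})$ vanish on $B_{2\rho}(y)$, and the verification of \eqref{eq:nonlocalHass} requires the three-case analysis $\vert x-y\vert\le 2\rho$, $2\rho\le\vert x-y\vert\le 3\rho$, $\vert x-y\vert>3\rho$. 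You instead take a pure exponential $H=e^{-g(t)-\beta\rho^{-1}\vert x-y\vert}$ with $\beta=\frac13\log\theta$ and $g$ linear in $t$, in the spirit of Davies' method, and absorb the carr\'e-du-champ cost $c_0\beta^2 e^\beta\rho^{-\alpha}$ into $-\partial_t H = g'H$. The key bookkeeping — that $g(s)=\frac{Cc_0}{9\nu}(\log\theta)^2\theta^{-2/3}$ can be made $\le 1$ by choosing $\nu$ large since $\sup_{\theta\ge 4}(\log\theta)^2\theta^{-2/3}<\infty$ — is exactly where your freedom to enlarge $\nu$ plays the same role as in the paper, and the resulting exponent $d/2\alpha+1/3-\sigma/(6\rho)$ is in fact slightly \emph{better} than the paper's $d/2\alpha+1/2-\sigma/(6\rho)$, which you then relax via the trivial bound $\theta^{1/3}\le 2^{\sigma/(6\rho)}\theta^{1/2}$ to match the asserted form. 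The remaining steps — the Sobolev/$H^{\alpha/2}$ membership of the Lipschitz exponentially-decaying $H^{1/2}$, the choice $R=((s-\eta)/2)^{1/\alpha}$ satisfying $R\le\rho/2\wedge s^{1/\alpha}$ under $s-\eta\le\rho^\alpha/(4\nu)$, and the passage from $\sup u$ to $\vert u(s,y)\vert$ by applying the argument to $\pm u$ — all check out. In short: same strategy, but your choice of $H$ bypasses the piecewise case analysis of \autoref{lemma:H} at the cost of tracking the $e^\beta$ factor, which the assumption $\theta\ge 4$ and the freedom in $\nu$ render harmless; this is a clean and correct alternative.
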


The idea to prove \autoref{thm:auxtrunc} is to find a suitable function $H$ such that \autoref{lemma:Aronson} is applicable. In the following, we present a suitable such function.

Given $y \in \R^d$, $\rho > 0$, $0 \le \eta < s < T$, $\nu > 1$ with $s-\eta \le \frac{1}{4\nu}\rho^{\alpha}$, we define $H_{y,\rho,\eta,s,\nu} = H : [\eta,s] \times \R^d \to [0,\infty)$ via
\begin{equation*}
\begin{split}
H(t,x) &:= \left(\frac{\rho^{\alpha}}{\nu[2(s-\eta) - (t-\eta)]}\right)^{-1} \wedge \left(\frac{\rho^{\alpha}}{\nu[2(s-\eta) - (t-\eta)]}\right)^{-\frac{\vert x-y \vert}{3\rho}}\\
&= e^{-\log\left( \frac{\rho^{\alpha}}{\nu[2(s-\eta) - (t-\eta)]} \right) \left( \frac{\vert x-y \vert}{3\rho} \vee 1\right)}.
\end{split}
\end{equation*}

\begin{lemma}
\label{lemma:H}
For every $C > 0$, there exists $\nu = \nu(d,\alpha, C) > 1$ such that for every  $y \in \R^d$, $\rho > 0$, $0 \le \eta < s < T$ with $s-\eta \le \frac{1}{4\nu}\rho^{\alpha}$, the function $H_{y,\rho,\eta,s,\nu} = H$ defined above satisfies
\begin{align}
\label{eq:H1}
C &\Gamma^{\alpha}_{\rho}(H^{1/2},H^{1/2}) \le -\partial_t H,~~ \text{ in }(\eta,s) \times \R^d,\\
\label{eq:H2}
H^{1/2} &\in L^2((\eta,s);H^{\alpha/2}(\R^d)).
\end{align}
\end{lemma}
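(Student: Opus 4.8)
The plan is to put $H$ into a form that makes $\partial_t H$ explicit, reduce \eqref{eq:H1} to a pointwise bound on the truncated carr\'e du champ of $H^{1/2}$, and extract that bound from the Lipschitz character of the exponent of $H$ together with the smallness hypothesis $s-\eta\le\frac{1}{4\nu}\rho^\alpha$. I would abbreviate $[t]:=2(s-\eta)-(t-\eta)$, so that $s-\eta\le[t]\le 2(s-\eta)$ on $[\eta,s]$, put $\psi(r):=\frac{r}{3\rho}\vee 1$, and set $A(t):=\frac{\rho^\alpha}{\nu[t]}$, so that $H(t,x)=A(t)^{-\psi(|x-y|)}$. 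The hypothesis forces $A(t)\ge 2$, hence $\log A(t)\ge\log 2>0$; in particular $H\le A(t)^{-1}\le\frac12$ is bounded and smooth in $t$ on $[\eta,s]$. Differentiating gives $-\partial_t H=\frac{\psi(|x-y|)}{[t]}H\ge\frac{1}{[t]}H$, so \eqref{eq:H1} follows once one proves the pointwise estimate $C\,\Gamma^\alpha_\rho(H^{1/2},H^{1/2})(t,x)\le\frac{1}{[t]}H(t,x)$; dividing by $H(t,x)>0$ and writing $g:=H^{1/2}(t,\cdot)$, this reads
\[
\int_{B_\rho(x)}\bigl(1-g(z)/g(x)\bigr)^2|x-z|^{-d-\alpha}\,\d z\le\frac{1}{C[t]}.
\]

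To prove this I would use that $g=e^{-\Phi}$ with $\Phi(w)=\frac12\psi(|w-y|)\log A(t)$, which is Lipschitz with constant $L:=\frac{\log A(t)}{6\rho}$ since $\psi$ is $\frac{1}{3\rho}$-Lipschitz; thus $g(z)/g(x)=e^{\Phi(x)-\Phi(z)}$ with $|\Phi(x)-\Phi(z)|\le L|x-z|$. Combining the elementary bound $(1-e^s)^2\le s^2e^{2|s|}$ with the fact that $|x-z|<\rho$ makes $2|s|<2L\rho=\frac{\log A}{3}$, one gets $\bigl(1-g(z)/g(x)\bigr)^2\le L^2|x-z|^2A^{1/3}$ throughout $B_\rho(x)$. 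Since $\alpha<2$ we have $\int_{B_\rho(x)}|x-z|^{2-d-\alpha}\,\d z=c_{d,\alpha}\rho^{2-\alpha}<\infty$, so the left side is at most $c_{d,\alpha}L^2\rho^{2-\alpha}A^{1/3}$; using $L^2\rho^{2-\alpha}=\frac{(\log A)^2}{36\rho^\alpha}$ and $\rho^\alpha=\nu[t]A$ this equals $\frac{c_{d,\alpha}}{36\,\nu[t]}\cdot\frac{(\log A)^2}{A^{2/3}}\le\frac{c_{d,\alpha}\,c_0}{36\,\nu[t]}$, where $c_0:=\sup_{A\ge2}(\log A)^2A^{-2/3}<\infty$. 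Choosing $\nu:=\max\{2,\,C\,c_{d,\alpha}c_0/36\}$ makes the right side at most $\frac{1}{C[t]}$, establishing \eqref{eq:H1}.

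For \eqref{eq:H2} I would estimate for fixed $t$ and then integrate in $t$. The norm $\|H^{1/2}(t)\|_{L^2(\R^d)}^2=\int_{\R^d}H(t,x)\,\d x$ is finite — the integrand equals $A(t)^{-1}$ on $B_{3\rho}(y)$ and decays exponentially, at rate $\frac{\log A(t)}{3\rho}>0$, outside — and is bounded uniformly for $t\in(\eta,s)$ because $A(t)$ stays in a fixed compact subset of $(1,\infty)$. For the Gagliardo seminorm I split the double integral at $|x-z|=\rho$: on $\{|x-z|\ge\rho\}$ use $(g(x)-g(z))^2\le 2g(x)^2+2g(z)^2$ and $\int_{|x-z|\ge\rho}|x-z|^{-d-\alpha}\,\d z=c\rho^{-\alpha}$, and on $\{|x-z|<\rho\}$ use the Lipschitz bound $|g(x)-g(z)|\le L(t)|x-z|\max(g(x),g(z))$ together with $\int_{|x-z|<\rho}|x-z|^{2-d-\alpha}\,\d z=c\rho^{2-\alpha}$ (finite since $\alpha<2$). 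This yields $[H^{1/2}(t)]_{H^{\alpha/2}(\R^d)}^2\le c(d,\alpha,\rho)\bigl(1+L(t)^2\bigr)\|H^{1/2}(t)\|_{L^2(\R^d)}^2$, and since $L(t)$ is bounded on $(\eta,s)$ the $t$-integral is finite, which is \eqref{eq:H2}.

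The point where care is needed — and the reason $\nu$ must depend on $C$ — is that $\Gamma^\alpha_\rho(H^{1/2},H^{1/2})$ is not obviously small: the ratio $g(z)/g(x)$ on $B_\rho(x)$ may be as large as $A^{1/3}$, and $A=A(t)$ is unbounded (it blows up as $s-\eta\downarrow0$). What saves the argument is the exact scaling of the exponent of $H$: $L$ behaves like $\frac{\log A}{\rho}$, so on the one hand $e^{2L|x-z|}\le A^{1/3}$ on $B_\rho(x)$ (because $L\rho$ is only $\frac{\log A}{6}$), while on the other hand $L^2\rho^{2-\alpha}$ carries a compensating factor $A^{-1}$ via $\rho^\alpha=\nu[t]A$; the net power $(\log A)^2A^{-2/3}$ stays bounded for $A\ge2$ and an extra $\nu^{-1}$ is left over to absorb $C$. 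The assumption $\alpha<2$ enters precisely to keep $|x-z|^{2-d-\alpha}$ integrable near the diagonal.
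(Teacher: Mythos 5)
Your proof is correct, and it takes a cleaner, more unified route than the paper's. The paper splits into three cases on $|x-y|$: inside $B_{2\rho}(y)$ (where the truncated carr\'e du champ vanishes identically), the annulus $2\rho\le|x-y|\le 3\rho$ (where it passes to an auxiliary boundary point $x_0\in\partial B_{3\rho}(y)$ chosen so that $|x_0-z|\le 2|x-z|$, then applies a gradient/mean-value bound), and $|x-y|>3\rho$ (where it estimates the gradient directly, using $|z-y|\ge|x-y|-\rho$). In each case it compares $\Gamma^\alpha_\rho(H^{1/2},H^{1/2})$ with the explicit $-\partial_t H$ separately. You instead note that $H^{1/2}=e^{-\Phi}$ with $\Phi$ globally Lipschitz of constant $L=\frac{\log A}{6\rho}$ (because $r\mapsto\frac{r}{3\rho}\vee 1$ is $\frac{1}{3\rho}$-Lipschitz), that $-\partial_t H\ge\frac{1}{[t]}H$ uniformly since $\psi\ge 1$, and that the elementary inequality $(1-e^s)^2\le s^2 e^{2|s|}$ collapses all three cases into a single pointwise estimate $\Gamma^\alpha_\rho(H^{1/2},H^{1/2})\le c_{d,\alpha}L^2\rho^{2-\alpha}A^{1/3}H$. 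The arithmetic that closes both arguments is identical: $L^2\rho^{2-\alpha}=\frac{(\log A)^2}{36\rho^\alpha}$, the identity $\rho^\alpha=\nu[t]A$, and the boundedness of $(\log A)^2A^{-2/3}$ for $A\ge 2$ (the paper phrases this as $\log a\le c a^{1/3}$, you as $c_0=\sup_{A\ge 2}(\log A)^2 A^{-2/3}$ — the same fact). What your formulation buys is the elimination of the case split and the auxiliary-point construction, plus a transparent explanation of where the $A^{1/3}$ factor comes from (it is exactly $e^{2L\rho}$). Your verification of \eqref{eq:H2} by splitting the Gagliardo integral at $|x-z|=\rho$ is also somewhat more explicit than the paper's one-line appeal to exponential decay, though the content is the same.
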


\begin{proof}
Let $y \in \R^d$, $\rho > 0$, $0 \le \eta < s <T$, with $s-\eta \le \frac{1}{4\nu}\rho^{\alpha}$, where $\nu > 1$ to be chosen later. Recall:
\begin{equation}
\label{eq:H}
\begin{split}
H(t,x) &:= \left(\frac{\rho^{\alpha}}{\nu[2(s-\eta) - (t-\eta)]}\right)^{-1} \wedge \left(\frac{\rho^{\alpha}}{\nu[2(s-\eta) - (t-\eta)]}\right)^{-\frac{\vert x-y \vert}{3\rho}}\\
&= e^{-\log\left( \frac{\rho^{\alpha}}{\nu[2(s-\eta) - (t-\eta)]} \right) \left( \frac{\vert x-y \vert}{3\rho} \vee 1\right)}.
\end{split}
\end{equation}
Note that by assumption, $\frac{\rho^{\alpha}}{\nu[2(s-\eta) - (t-\eta)]} > 1$ for every $t \in [\eta,s]$. Let $t \in [\eta,s]$ be fixed. We split the proof of \eqref{eq:H1} into three cases.

Case 1: $\vert x-y \vert \le 2 \rho$.\\
In this case, trivially $\Gamma^{\alpha}_{\rho}(H^{1/2},H^{1/2})(t,x) = 0$, and
\begin{equation*}
-\partial_t H(t,x) = -\partial_t \left(\frac{\nu[2(s-\eta) - (t-\eta)]}{\rho^{\alpha}} \right) = \nu \rho^{-\alpha} > 0.
\end{equation*}
Therefore, \eqref{eq:H1} holds true for any $\nu > 0$.

Case 2: $2\rho \le \vert x-y \vert \le 3\rho$.\\
In this case, $-\partial_t H(t,x) = \nu\rho^{-\alpha}$, as in Case 1.
Moreover, let us fix $x_0 \in B_{\rho}(x)$ with $\vert x_0 - y\vert = 3\rho$ such that for every $z \in \R^d \setminus B_{3\rho}(y)$ it holds $\vert x_0-z \vert \le 2 \vert x-z \vert$. This holds true, e.g. if one chooses $x_0$ as a point on $\partial B_{3\rho}(y)$ that minimizes $\dist(x,\R^d \setminus B_{3\rho}(y))$, since then by triangle inequality: $\vert x_0 - z\vert \le \vert x_0 - x\vert + \vert x - z\vert \le 2 \vert x-z \vert$. Note that $H(t,x) = H(t,x_0)$ and $B_{\rho}(x) \subset B_{2\rho}(x_0)$. 

Therefore:
\pagebreak[2]
\begin{align*}
\Gamma^{\alpha}_{\rho}(H^{1/2},H^{1/2})&(t,x) = \int_{B_{\rho}(x) \setminus B_{3\rho}(y)} \left( H^{1/2}(t,x) - H^{1/2}(t,z) \right)^2 \vert x-z\vert^{-d-\alpha} \d z \\
&\le c_1\int_{B_{2\rho}(x_0) \setminus B_{3\rho}(y)} \left( H^{1/2}(t,x_0) - H^{1/2}(t,z) \right)^2 \vert x_0-z\vert^{-d-\alpha} \d z \\
&\le \int_{B_{2\rho}(x_0) \setminus B_{3\rho}(y)} \vert \nabla H^{1/2}(t,x_0) \vert^2 \vert x_0 - z \vert^{2-d-\alpha} \d z\\
&\le c_2 \vert \nabla H^{1/2}(t,x_0) \vert^2 \rho^{2-\alpha}\\
&= c_2 \left( (6\rho)^{-1} \log\left(  \frac{\rho^{\alpha}}{\nu[2(s-\eta) - (t-\eta)]}  \right)  e^{-\log\left( \frac{\rho^{\alpha}}{\nu[2(s-\eta) - (t-\eta)]} \right) \left( \frac{\vert x_0-y \vert}{6\rho}\right)}\right)^2 \rho^{2-\alpha} \\
&= c_3 \rho^{-\alpha} \left(\log\left(  \frac{\rho^{\alpha}}{\nu[2(s-\eta) - (t-\eta)]}  \right) \left( \frac{\rho^{\alpha}}{\nu[2(s-\eta) - (t-\eta)]} \right)^{-1/2}\right)^2\\
&\le c_4 \rho^{-\alpha}
\end{align*}
for some $c_1,c_2,c_3,c_4 > 0$. We used the inequality $\log(a) \le a^{1/2}$ in the last step. Moreover, note that the estimate $\left( H^{1/2}(t,x_0) - H^{1/2}(t,z) \right)^2 \le \vert \nabla H^{1/2}(t,x_0) \vert^2 \vert x_0 - z \vert^{2}$ is correct since $\sup_{z \in \R^d \setminus B_{3\rho}(y)} \vert \nabla H^{1/2}(t,z)\vert = \vert \nabla H^{1/2}(t,x_0)\vert$ due to $x_0 \in \partial B_{3\rho}(y)$.
Therefore, \eqref{eq:H1} holds true in this case for any $\nu > c_4C$.

Case 3: $\vert x-y \vert > 3\rho$.\\
In this case,
\begin{equation*}
-\partial_t H(t,x) = \frac{\vert x-y \vert}{3\rho [2(s-\eta) - (t-\eta)]}e^{-\log\left( \frac{\rho^{\alpha}}{\nu[2(s-\eta) - (t-\eta)]} \right) \left( \frac{\vert x-y \vert}{3\rho}\right)}.
\end{equation*}
Moreover:
\begin{align*}
\Gamma^{\alpha}_{\rho}&(H^{1/2},H^{1/2})(t,x) = \int_{B_{\rho}(x)} \left( H^{1/2}(t,x) - H^{1/2}(t,z) \right)^2 \vert x-z\vert^{-d-\alpha} \d z \\
&\le \int_{B_{\rho}(x)} \left( e^{-\log\left( \frac{\rho^{\alpha}}{\nu[2(s-\eta) - (t-\eta)]} \right) \left( \frac{\vert x-y \vert}{6\rho}\right)} - e^{-\log\left( \frac{\rho^{\alpha}}{\nu[2(s-\eta) - (t-\eta)]} \right) \left( \frac{\vert z-y \vert}{6\rho}\right)} \right)^2 \vert x-z\vert^{-d-\alpha} \d z\\
&\le \int_{B_{\rho}(x)} \sup_{z \in B_{\rho}(x)} \left\vert \nabla e^{-\log\left( \frac{\rho^{\alpha}}{\nu[2(s-\eta) - (t-\eta)]} \right) \left( \frac{\vert z-y \vert}{6\rho}\right)} \right\vert^2 \vert x-z \vert^{2-d-\alpha} \d z\\
&\le c_5 \left((6\rho)^{-1} \log \left( \frac{\rho^{\alpha}}{\nu[2(s-\eta) - (t-\eta)]} \right) e^{-\log\left(\frac{\rho^{\alpha}}{\nu[2(s-\eta) - (t-\eta)]} \right) \left( \frac{\vert x-y \vert - \rho}{6\rho}\right)} \right)^2 \rho^{2-\alpha}\\
& = c_6 \left[\log \left( \frac{\rho^{\alpha}}{\nu[2(s-\eta) - (t-\eta)]} \right)\right]^2  \left( \frac{\rho^{\alpha}}{\nu[2(s-\eta) - (t-\eta)]} \right)^{1/3} e^{-\log\left( \frac{\rho^{\alpha}}{\nu[2(s-\eta) - (t-\eta)]} \right) \left( \frac{\vert x-y \vert}{3\rho}\right)} \rho^{-\alpha}\\
&\le c_7 \frac{1}{\nu [2(s-\eta) - (t-\eta)]}e^{-\log\left( \frac{\rho^{\alpha}}{\nu[2(s-\eta) - (t-\eta)]} \right) \left( \frac{\vert x-y \vert}{3\rho}\right)}\\
&\le c_7 \frac{\vert x-y \vert}{3\rho \nu [2(s-\eta) - (t-\eta)]}e^{-\log\left( \frac{\rho^{\alpha}}{\nu[2(s-\eta) - (t-\eta)]} \right) \left( \frac{\vert x-y \vert}{3\rho}\right)}
\end{align*}

for some $c_5,c_6,c_7 > 0$. In the third inequality, we used $\vert z-y \vert \ge \vert x-y \vert - \rho$, and in the second to last step we applied the estimate $\log(a) \le c a^{1/3}$. This holds with $c > 0$ independent of $a > 1$.
Therefore, by choosing $\nu > c_7C$, \eqref{eq:H1} is satisfied also in this case.\\
Together, we have proved \eqref{eq:H1}.
Finally, note that $\Gamma^{\alpha}_{\rho}(H^{1/2},H^{1/2}) \in L^1((\eta,s) \times \R^d)$ since for $\vert x-y \vert \ge 3\rho$, we computed above
\begin{equation*}
\Gamma^{\alpha}_{\rho}(H^{1/2},H^{1/2})(t,x) \le \vert x-y \vert c^{-\frac{\vert x-y \vert}{3\rho}},
\end{equation*}
where $c > 1$ is a constant that might depend on $\eta,s,\rho$. This proves \eqref{eq:H2}.
\end{proof}

Having at hand the function $H$ defined in \eqref{eq:H}, it is possible to establish \autoref{thm:auxtrunc}.

\begin{proof}[Proof of \autoref{thm:auxtrunc}]
The idea is to apply \autoref{lemma:Aronson} with $H$ as in \autoref{lemma:H}. It follows that for every $y \in \R^d$, $0 \le \eta < s < T$ with $s-\eta \le \frac{1}{4\nu}\rho^{\alpha}$:

\begin{equation*}
\sup_{\tau \in (\eta,s)}\int_{B_{2\rho}(y)} u^2(\tau,x) H(\tau,x) \d x \le \sup_{\tau \in (\eta,s)}\int_{\R^d} u^2(\tau,x) H(\tau,x) \d x  \le \int_{\R^d \setminus B_{\sigma}(y)} u^2_0(x)  H(\eta,x) \d x.
\end{equation*}
Consequently,
\begin{equation*}
\sup_{\tau \in (\eta,s)}\int_{B_{2\rho}(y)} u^2(\tau,x) \d x \le \left( \frac{\sup_{x \in \R^d \setminus B_{\sigma}(y)} H(\eta,x)}{\inf_{\tau \in (\eta,s), x \in B_{2\rho}(y)} H(\tau,x)} \right) \Vert u_0 \Vert_{L^2(\R^d)}^2.
\end{equation*}
By the truncated $L^{\infty}-L^2$-estimate \autoref{lemma:truncL2}, applied with $R = (s-\eta)^{1/\alpha}$, $t_0 = s$, $x_0 = y$:
\begin{equation*}
\begin{split}
\sup_{(\eta',s) \times B_{\frac{1}{2}(s-\eta)^{1/\alpha}}(y)} u  &\le c_1(s-\eta)^{-\frac{d}{2\alpha}} \left( \frac{\rho^{\alpha}}{s-\eta} \right)^{\frac{d}{2\alpha}} \sup_{\tau \in (\eta,s)}\left(\int_{B_{2\rho}(y)} u^2(\tau,x)  \d x \right)^{1/2}\\
&\le c_1(s-\eta)^{-\frac{d}{2\alpha}} \left( \frac{\rho^{\alpha}}{s-\eta} \right)^{\frac{d}{2\alpha}} \left( \frac{\sup_{x \in \R^d \setminus B_{\sigma}(y)} H(\eta,x)}{\inf_{\tau \in (\eta,s), x \in B_{2\rho}(y)} H(\tau,x)} \right)^{1/2} \Vert u_0 \Vert_{L^2(\R^d)}
\end{split}
\end{equation*}
for some $c_1 > 0$, where $\eta' := s - 2^{-\alpha}(s-\eta) \in (\eta,s)$.\\
Note that there exist $c_2,c_3 > 0$ such that for $x \in \R^d \setminus B_{\sigma}(y)$ it holds 
\begin{equation*}
H(\eta,x) \le c_2 \left(\frac{\rho^{\alpha}}{2\nu(s-\eta)}\right)^{-\frac{\sigma}{3\rho}}
\end{equation*}
and for $(\tau,x) \in [\eta,s] \times B_{2\rho}(y)$ we have:
\begin{equation*}
H(\tau,x)\ge e^{-\log\left( \frac{\rho^{\alpha}}{\nu(s-\eta)} \right) \left( \frac{2\rho}{6\rho} \vee 1\right)} \ge c_3 e^{-\log\left( \frac{\rho^{\alpha}}{\nu(s-\eta)} \right)} \ge c_3 \left(\frac{\rho^{\alpha}}{\nu(s-\eta)}\right)^{-1}.
\end{equation*}
This follows directly from the definition of $H$ and $s-\eta \le \frac{1}{4\nu}\rho^{\alpha}$. 
Together, we obtain 
\begin{equation*}
\vert u(s,y)\vert \le c_4(s-\eta)^{-\frac{d}{2\alpha}} 2^{\frac{\sigma}{6\rho}} \left(\frac{\rho^{\alpha}}{\nu(s-\eta)}\right)^{-\frac{\sigma}{6\rho} + \frac{1}{2} + \frac{d}{2\alpha}} \Vert u_0 \Vert_{L^2(\R^d)}
\end{equation*}
for some $c_4 > 0$, as desired.
\end{proof}

Having proved \autoref{thm:auxtrunc}, we are now in the position to establish upper off-diagonal bounds for $p_{\rho}(y,s;x,\eta)$:

\begin{theorem}
\label{thm:offdiagtrunc}
Assume that $k$ satisfies \eqref{eq:kupper}, \eqref{eq:klower}. Then there exists $c > 0$ depending on $d,\alpha,\lambda,\Lambda$ such that for every $\rho > 0$, $0 \le \eta < s < T$, and $x,y \in \R^d$ with  $s-\eta \le \frac{1}{4\nu}\rho^{\alpha}$:
\begin{equation}
\label{eq:offdiagtrunc}
p_{\rho}(y,s;x,\eta) \le c(s-\eta)^{-\frac{d}{\alpha}} 2^{\frac{\vert x-y \vert}{12\rho}} \left( \frac{\rho^{\alpha}}{\nu(s-\eta)} \right)^{-\frac{\vert x-y \vert}{12\rho} + \frac{1}{2} + \frac{d}{2\alpha}}.
\end{equation}
\end{theorem}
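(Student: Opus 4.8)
The plan is to upgrade the pointwise estimate of \autoref{thm:auxtrunc} — which controls a solution at a point by the $L^2$-norm of initial data vanishing near that point — into a pointwise bound on $p_\rho$, by inserting an intermediate time and combining the semigroup property \eqref{eq:psemi}, the symmetry \eqref{eq:psymm}, and the on-diagonal bound \autoref{thm:ondiag}.

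\emph{Step 1: a dual reformulation of \autoref{thm:auxtrunc}.} Fix $z_0 \in \R^d$, $\sigma > 0$ and $0 \le \eta < s < T$ with $s-\eta \le \frac{1}{4\nu}\rho^{\alpha}$. For $u_0 \in L^2(\R^d) \cap L^{\infty}(\R^d)$ with $u_0 \equiv 0$ on $B_{\sigma}(z_0)$, the function $u = P^{\rho}_{\eta,\cdot}u_0$ is a bounded weak solution of $\partial_t u - L^{\rho}_t u = 0$ (boundedness follows from $\|u(t)\|_{\infty} \le \|u_0\|_{\infty}$, which in turn follows from $p_\rho \ge 0$ and \eqref{eq:pint}). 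Hence \autoref{thm:auxtrunc} gives
\[
\Big| \int_{B_{\sigma}(z_0)^c} p_{\rho}(z_0,s;w,\eta) u_0(w) \, \d w \Big| \le C(s-\eta)^{-\frac{d}{2\alpha}} 2^{\frac{\sigma}{6\rho}} \Big( \tfrac{\rho^{\alpha}}{\nu(s-\eta)} \Big)^{-\frac{\sigma}{6\rho} + \frac12 + \frac{d}{2\alpha}} \| u_0 \|_{L^2(\R^d)} .
\]
Since $p_{\rho}(z_0,s;\cdot,\eta) \in L^2(\R^d)$ by the on-diagonal bound and \eqref{eq:pint}, taking the supremum over such $u_0$ with $\|u_0\|_{L^2(\R^d)} \le 1$, a dense family in $L^2(B_{\sigma}(z_0)^c)$, yields the $L^2$-off-diagonal estimate
\[
\Big( \int_{B_{\sigma}(z_0)^c} p_{\rho}(z_0,s;w,\eta)^2 \, \d w \Big)^{1/2} \le C(s-\eta)^{-\frac{d}{2\alpha}} 2^{\frac{\sigma}{6\rho}} \Big( \tfrac{\rho^{\alpha}}{\nu(s-\eta)} \Big)^{-\frac{\sigma}{6\rho} + \frac12 + \frac{d}{2\alpha}} .
\]

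\emph{Step 2: splitting at the midpoint.} Put $\sigma := |x-y|/2$ and $t := \tfrac{\eta+s}{2}$, so that $t-\eta = s-t = \tfrac{s-\eta}{2} \le \tfrac{1}{4\nu}\rho^{\alpha}$ and the open balls $B_{\sigma}(x)$, $B_{\sigma}(y)$ are disjoint, whence $B_{\sigma}(y) \subseteq B_{\sigma}(x)^c$. By \eqref{eq:psemi} and \eqref{eq:psymm},
\[
p_{\rho}(y,s;x,\eta) = \int_{\R^d} p_{\rho}(z,s;y,t) \, p_{\rho}(z,t;x,\eta) \, \d z = \int_{B_{\sigma}(y)^c} + \int_{B_{\sigma}(y)} =: \mathrm{I} + \mathrm{II}.
\]
For $\mathrm{I}$, Cauchy--Schwarz together with the estimate of Step 1 (applied with $z_0 = y$ on the interval $(t,s)$) bounds $\big(\int_{B_{\sigma}(y)^c} p_{\rho}(z,s;y,t)^2 \d z\big)^{1/2}$, while $\int_{\R^d} p_{\rho}(z,t;x,\eta)^2 \d z \le \|p_{\rho}(\cdot,t;x,\eta)\|_{\infty} \le c(t-\eta)^{-d/\alpha}$ by \autoref{thm:ondiag} and \eqref{eq:pint} (the factor $e^{c\rho^{-\alpha}(t-\eta)}$ is bounded by a constant since $t-\eta \le \tfrac{1}{4\nu}\rho^{\alpha}$). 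For $\mathrm{II}$, since $B_{\sigma}(y) \subseteq B_{\sigma}(x)^c$ we bound the domain of integration by $B_{\sigma}(x)^c$, apply Cauchy--Schwarz, estimate $\int_{\R^d} p_{\rho}(z,s;y,t)^2 \d z$ on-diagonally as above, and use $\int_{B_{\sigma}(x)^c} p_{\rho}(z,t;x,\eta)^2 \d z = \int_{B_{\sigma}(x)^c} p_{\rho}(x,t;z,\eta)^2 \d z$ together with the estimate of Step 1 (applied with $z_0 = x$ on the interval $(\eta,t)$). Adding $\mathrm{I}$ and $\mathrm{II}$ gives
\[
p_{\rho}(y,s;x,\eta) \le c(s-\eta)^{-\frac{d}{\alpha}} 2^{\frac{\sigma}{6\rho}} \Big( \tfrac{2\rho^{\alpha}}{\nu(s-\eta)} \Big)^{-\frac{\sigma}{6\rho} + \frac12 + \frac{d}{2\alpha}} .
\]

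\emph{Step 3: cleanup.} It remains to replace $\tfrac{2\rho^{\alpha}}{\nu(s-\eta)}$ by $\tfrac{\rho^{\alpha}}{\nu(s-\eta)}$. Writing $a := \tfrac{\rho^{\alpha}}{\nu(s-\eta)} \ge 4$ and $e := -\tfrac{\sigma}{6\rho} + \tfrac12 + \tfrac{d}{2\alpha}$, one has $(2a)^e = 2^e a^e \le c(d,\alpha) a^e$: if $e \le 0$ this holds with $c=1$, and if $e > 0$ then $e < \tfrac12 + \tfrac{d}{2\alpha}$, so $2^e \le 2^{\frac12 + \frac{d}{2\alpha}}$. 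Since $\sigma/(6\rho) = |x-y|/(12\rho)$, this is precisely \eqref{eq:offdiagtrunc}. The only genuinely delicate point is the duality/density argument of Step 1 — ensuring the test data may be taken bounded so that $u$ is admissible in \autoref{thm:auxtrunc}, and that $p_{\rho}(z_0,s;\cdot,\eta)$ is square-integrable so that the supremum recovers the $L^2$-norm over $B_{\sigma}(z_0)^c$. Everything else is a routine combination of the semigroup identity, symmetry, Cauchy--Schwarz and the on-diagonal bound; the one geometric observation that makes the midpoint splitting work is that $|x-y| = 2\sigma$ forces $B_{\sigma}(y) \subseteq B_{\sigma}(x)^c$, so in each of the two pieces the off-diagonal smallness of Step 1 can be applied to one kernel factor while the other is controlled on-diagonally.
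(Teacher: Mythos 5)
Your proof is correct and follows essentially the same route as the paper: derive an off-diagonal $L^2$-estimate for $p_\rho(y,s;\cdot,\eta)$ over $\R^d\setminus B_\sigma(y)$ from \autoref{thm:auxtrunc}, then split the Chapman--Kolmogorov identity at the midpoint time and apply Cauchy--Schwarz to each piece, pairing the off-diagonal $L^2$-bound on one factor with the on-diagonal $L^2$-bound on the other. The only cosmetic difference is in how the off-diagonal $L^2$-estimate is extracted: you argue by $L^2$-duality over bounded test data supported outside $B_\sigma(z_0)$, whereas the paper applies \autoref{thm:auxtrunc} directly to the self-referential choice $u_0 = p_\rho(y,s;\cdot,\eta)\mathbbm{1}_{\{|\cdot-y|>\sigma\}}$, obtaining a self-improving inequality that divides out; both yield the identical bound, and your Step 3 additionally makes explicit the absorption of the factor $2$ from the halved time interval, which the paper leaves implicit.
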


\begin{proof}
Note that the on-diagonal bound \eqref{eq:truncondiag} and \eqref{eq:pint} immediately imply for every $0 \le \eta < s < T$ with $s-\eta \le \frac{1}{4\nu}\rho^{\alpha}$ and $x,y \in \R^d$:
\begin{equation}
\label{eq:weakl2esttrunc}
\left(\int_{\R^d} p_{\rho}^2(z,s;x,\eta) \d z\right)^{1/2} \le c_1 (s-\eta)^{-\frac{d}{2\alpha}}
\end{equation}
for some $c_1 > 0$. On the other hand, from \autoref{thm:auxtrunc}, it follows for every $0 \le \eta < s < T$ with $s-\eta \le \frac{1}{4\nu}\rho^{\alpha}$ and $x,y \in \R^d$:
\begin{equation}
\label{eq:l2esttrunc}
\left(\int_{\R^d \setminus B_{\sigma}(y)} p_{\rho}^2(y,s;z,\eta) \d z\right)^{1/2} \le c_2 (s-\eta)^{-\frac{d}{2\alpha}}2^{\frac{\sigma}{6\rho}} \left(\frac{\rho^{\alpha}}{\nu(s-\eta)}\right)^{-\frac{\sigma}{6\rho} + \frac{1}{2} + \frac{d}{2\alpha}}
\end{equation}
for some $c_2 > 0$. To see this, one observes that $u(t,x) = \int_{\R^d \setminus B_{\sigma}(y)} p_{\rho}(x,t;z,\eta)p_{\rho}(y,s;z,\eta)\d z$ satisfies the assumptions of \autoref{thm:auxtrunc} with 
\begin{align*}
u_0(x) = p_{\rho}(y,s;x,\eta)\mathbbm{1}_{\{\vert x-y \vert > \sigma\}}(x), ~~~~ u(s,y) = \int_{\R^d \setminus B_{\sigma}(y)} p_{\rho}^2(y,s;z,\eta) \d z.
\end{align*}

To prove \eqref{eq:offdiagtrunc}, let us fix $0 \le \eta < s < T$ with $s-\eta \le \frac{1}{4\nu}\rho^{\alpha}$, and $x,y \in \R^d$. Then we define $\sigma = \frac{1}{2}\vert x-y \vert$ and compute, using \eqref{eq:psemi}:
\begin{align*}
p_{\rho}(y,s;x,\eta) &= \int_{\R^d} p_{\rho}(y,s;z,(s-\eta)/2)p_{\rho}(z,(s-\eta)/2;x,\eta) \d z\\
&= \int_{\R^d \setminus B_{\sigma}(y)} p_{\rho}(y,s;z,(s-\eta)/2)p_{\rho}(z,(s-\eta)/2;x,\eta) \d z\\
&+ \int_{B_{\sigma}(y)} p_{\rho}(y,s;z,(s-\eta)/2)p_{\rho}(z,(s-\eta)/2;x,\eta) \d z\\
& = J_1+J_2.
\end{align*}
For $J_1$, we compute, using \eqref{eq:weakl2esttrunc}, \eqref{eq:l2esttrunc}:
\begin{align*}
J_1 &\le \left(\int_{\R^d \setminus B_{\sigma}(y)} p_{\rho}^2(y,s;z,(s-\eta)/2) \d z \right)^{1/2} \left(\int_{\R^d \setminus B_{\sigma}(y)} p_{\rho}^2(z,(s-\eta)/2;x,\eta) \d z \right)^{1/2}\\
&\le c_3(s-\eta)^{-\frac{d}{\alpha}} 2^{\frac{\vert x-y \vert}{12\rho}} \left( \frac{\rho^{\alpha}}{\nu(s-\eta)} \right)^{-\frac{\vert x-y \vert}{12\rho} + \frac{1}{2} + \frac{d}{2\alpha}}
\end{align*}
for some $c_3 > 0$. For $J_2$, observe that $B_{\sigma}(y) \subset \R^d \setminus B_{\sigma}(x)$, and therefore by \eqref{eq:psymm}:
\begin{align*}
J_2 &\le \left(\int_{\R^d \setminus B_{\sigma}(x)} p_{\rho}^2(y,s;z,(s-\eta)/2) \d z \right)^{1/2} \left(\int_{\R^d \setminus B_{\sigma}(x)} p_{\rho}^2(z,(s-\eta)/2;x,\eta) \d z \right)^{1/2}\\
&\le c_4(s-\eta)^{-\frac{d}{\alpha}} 2^{\frac{\vert x-y \vert}{12\rho}} \left( \frac{\rho^{\alpha}}{\nu(s-\eta)} \right)^{-\frac{\vert x-y \vert}{12\rho} + \frac{1}{2} + \frac{d}{2\alpha}}
\end{align*}
for some $c_4 > 0$. Together, we obtain the desired result.
\end{proof}

Bounds for the heat kernel corresponding to the truncated jumping kernel $p_{\rho}$ imply bounds for $p$ with the help of the gluing lemma \autoref{lemma:MeyersDec}. The underlying argument is known among probabilists as 'Meyer's decomposition'. For an analytic proof relying on the parabolic maximum principle we refer to the appendix.

We are now ready to provide the proof of our main result \autoref{thm:main}:


\begin{proof}[Proof of \autoref{thm:main}]
Let $x,y \in \R^d$ be fixed. By \eqref{eq:pondiag} it suffices to prove that for some constants $c_0, c_1 > 0$ and $s-\eta \le c_0 \vert x-y \vert^{\alpha}$ it holds
\begin{equation*}
p(y,s;x,\eta) \le c_1\frac{s-\eta}{\vert x-y \vert^{d+\alpha}}.
\end{equation*}
By \autoref{lemma:MeyersDec}, and \eqref{eq:kupper} we know that for every $\rho > 0$, $0 \le \eta < s < T$:
\begin{equation}
\label{eq:glueinglemma}
\begin{split}
p(y,s;x,\eta) &\le p_{\rho}(y,s;x,\eta) + c_2(s-\eta)\Vert k-k\1_{\{\vert x-y \vert \le \rho\}}\Vert_{\infty}\\
&\le p_{\rho}(y,s;x,\eta) + c_3(s-\eta) \rho^{-d-\alpha}
\end{split}
\end{equation}
for some $c_2,c_3 > 0$.
We choose $\rho = \frac{\vert x-y \vert}{12}\left(\frac{d+\alpha}{\alpha} + \frac{1}{2} + \frac{d}{2\alpha}\right)^{-1}$. Then by \eqref{eq:offdiagtrunc} and \eqref{eq:glueinglemma} it holds for $s-\eta \le \frac{1}{4\nu}\rho^{\alpha}$:
\begin{equation*}
p(y,s;x,\eta) \le c_4(s-\eta)\vert x-y \vert^{-d-\alpha},
\end{equation*}
where $c_4 > 0$, as desired.
\end{proof}

\section{Extension: Jumping kernels of mixed type on metric measure spaces}
\label{sec:extensions}

In this section, we discuss a possible extension of the nonlocal Aronson method to jumping kernels of mixed type. Moreover, we work on a general doubling metric measure space.

Let $(M,d)$ be a locally compact, separable metric space, and let $\mu$ be a positive Radon measure with full support. We assume that $(M,d,\mu)$ satisfies the volume doubling property, i.e., there exists $C > 0$, $d \in \N$ such that
\begin{equation}
\label{eq:VD}\tag{VD}
\frac{\mu(B_R(x))}{\mu(B_r(x))} \le C \left( \frac{R}{r} \right)^d, ~~ x \in M, ~ 0 < r \le R.
\end{equation}
Note that as a consequence of \eqref{eq:VD}, for every $\delta > 0$ there exist $c_1,c_2 > 0$ such that for every $R > 0$, $x,y \in M$ with $d(x,y) \le \delta R$: $c_1 \mu(B_{R}(x)) \le \mu(B_{R}(y)) \le c_2 \mu(B_{R}(x))$.

Moreover, let $\phi : [0,\infty) \to [0,\infty)$ be strictly increasing with $\phi(0) = 0$, $\phi(1) = 1$ and
\begin{equation}
\label{eq:scaling}
C^{-1} \left( \frac{R}{r} \right)^{\alpha_1} \le \frac{\phi(R)}{\phi(r)} \le C \left( \frac{R}{r} \right)^{\alpha_2}, ~~ 0 < r \le R,
\end{equation}
for some constant $C > 0$ and $0 < \alpha_1 \le \alpha_2 < 2$.\\
For a detailed discussion of the setup, we refer to \cite{CKW21}.

Consider symmetric jumping kernels $k : (0,T) \times M \times M \to \R$ satisfying for some $\Lambda > 0$
\begin{equation}
\label{eq:mixedcomp}\tag{$k_{\le}$}
k(t;x,y) \le \Lambda \mu(B_{d(x,y)}(x))^{-1} \phi(d(x,y))^{-1}, ~~ x,y \in M,
\end{equation}
and assume that there is $\mathcal{F} \subset L^2(M,\mu)$ such that $(\cE_t,\mathcal{F})$ is a regular Dirichlet form on $L^2(M,\mu)$ for every $t \in (0,T)$, where for every $u,v \in \mathcal{F}$,
\begin{equation*}
\cE_t(u,v) = \int_M \int_M (u(x)-u(y))(v(x)-v(y))k(t;x,y) \d x \d y
\end{equation*}
is defined in the usual way. For simplicity, we write $\d x := \mu(\d x)$.\\
Moreover, we assume that the Faber-Krahn inequality holds true, i.e., that there exist $c, \nu > 0$ such that for all $t \in (0,T)$, $R > 0$, $x_0 \in M$, $D \subset B_R(x_0)$ and every $u \in \mathcal{F}$ with $u \equiv 0$ in $M \setminus D$:
\begin{equation}
\label{eq:FK}\tag{FK}
\cE_t(u,u) \ge c \phi(R)^{-1}\left( \frac{\mu(B_{R}(x_0))}{\mu(D)} \right)^{\nu} \Vert u \Vert_{L^2(D)}^2.
\end{equation}
Let $p(y,s;x,\eta)$ be the fundamental solution to the Cauchy problem associated with $k$.

\begin{theorem}
\label{thm:mainmixed}
Let $(M,d,\mu)$ and $\phi$ be as above, and assume \eqref{eq:VD}, \eqref{eq:scaling}. Assume that $k$ satisfies \eqref{eq:mixedcomp} and \eqref{eq:FK}. Then there exists $c > 0$ such that for every $0 \le \eta < s < T$, $x,y \in M$:
\begin{equation}
p(y,s;x,\eta) \le c \left[ \mu(B_{\phi^{-1}(s-\eta)}(x))^{-1} \wedge \frac{s-\eta}{\mu(B_{d(x,y)}(x)) \phi(d(x,y))} \right].
\end{equation}
\end{theorem}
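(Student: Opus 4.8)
The plan is to mimic the proof of \autoref{thm:main} in the doubling metric-measure-space setting, replacing the explicit fractional-Sobolev / Nash ingredients by their abstract counterparts: the Faber-Krahn inequality \eqref{eq:FK} plays the role of \eqref{eq:klower}, the volume doubling property \eqref{eq:VD} together with the scaling bounds \eqref{eq:scaling} for $\phi$ replaces the homogeneity of $|x-y|^{-d-\alpha}$, and the function $\phi$ takes over the role of $r\mapsto r^\alpha$. Concretely, I would first record the analogue of \autoref{lemma:MeyersDec} (Meyer's decomposition) for the truncated kernel $k_\rho$: from \eqref{eq:mixedcomp} one has $\Vert k-k_\rho\Vert_\infty \lesssim \mu(B_\rho(x))^{-1}\phi(\rho)^{-1}$ uniformly, which yields $p(y,s;x,\eta)\le p_\rho(y,s;x,\eta)+c(s-\eta)\mu(B_\rho(x))^{-1}\phi(\rho)^{-1}$ and the reverse comparison $p_\rho\le e^{c\phi(\rho)^{-1}(s-\eta)}p$; the appendix argument via the parabolic maximum principle goes through verbatim since it uses only symmetry and the pointwise bound. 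Next I would establish the on-diagonal bound $p(y,s;x,\eta)\le c\,\mu(B_{\phi^{-1}(s-\eta)}(x))^{-1}$ (and its truncated version) via the $L^\infty$-$L^1$-estimate \autoref{lemma:mixedLinftyL2} referenced in the text, exactly as indicated after \autoref{thm:ondiag}, using $\int p=1$.

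The heart of the argument is the off-diagonal estimate for $p_\rho$, obtained through the nonlocal Aronson method. I would prove the mixed-type analogue of \autoref{lemma:Aronson}: for a bounded solution $u$ of the $\rho$-truncated Cauchy problem and any bounded $H:[\eta,s]\times M\to[0,\infty)$ with $C\,\Gamma^\phi_\rho(H^{1/2},H^{1/2})\le-\partial_t H$ (where $\Gamma^\phi_\rho(f,f)(x)=\int_{B_\rho(x)}(f(x)-f(y))^2\,\mu(B_{d(x,y)}(x))^{-1}\phi(d(x,y))^{-1}\d y$) and $H^{1/2}\in L^2((\eta,s);\mathcal{F})$, one has $\sup_{t}\int_M H u^2\d x\le\int_M H(\eta,\cdot)u_0^2\d x$. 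The proof is the same testing computation with $\phi=\gamma_R^2 H u$ and the algebraic inequality; the only new point is to verify that $\Gamma^\phi_\rho(\gamma_R,\gamma_R)H\to0$ in $L^1$ after integrating, which follows from \eqref{eq:VD}. Then I would construct a weight $H$ adapted to $\phi$: the natural replacement of the function in \eqref{eq:H} is
\begin{equation*}
H(t,x)=\exp\!\left(-\log\!\left(\frac{\phi(\rho)}{\nu[2(s-\eta)-(t-\eta)]}\right)\!\left(\frac{d(x,y)}{3\rho}\vee 1\right)\right),
\end{equation*}
and I would check the differential inequality $C\,\Gamma^\phi_\rho(H^{1/2},H^{1/2})\le-\partial_t H$ on the three regions $d(x,y)\le 2\rho$, $2\rho\le d(x,y)\le 3\rho$, $d(x,y)>3\rho$ as in \autoref{lemma:H}; the exponential-versus-polynomial estimates $\log a\le a^{1/2}$, $\log a\le ca^{1/3}$ and the doubling of $\mu$ are what make the truncated carré-du-champ of $H^{1/2}$ controllable by $\partial_t H$ when $\nu$ is large. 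Combining the Aronson estimate for this $H$ with the mixed-type truncated $L^\infty$-$L^2$-estimate (proved by De Giorgi iteration exactly as in \autoref{lemma:truncL2}, with \eqref{eq:FK} in place of \eqref{eq:klower}) gives, for $s-\eta\le\tfrac{1}{4\nu}\phi(\rho)$, the off-diagonal bound
\begin{equation*}
p_\rho(y,s;x,\eta)\le c\,\mu(B_{\phi^{-1}(s-\eta)}(x))^{-1}\,2^{\frac{d(x,y)}{12\rho}}\!\left(\frac{\phi(\rho)}{\nu(s-\eta)}\right)^{-\frac{d(x,y)}{12\rho}+\theta}
\end{equation*}
for a fixed exponent $\theta$ depending on $d,\alpha_1,\alpha_2,\nu$, via the semigroup splitting $J_1+J_2$ of \autoref{thm:offdiagtrunc}.

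Finally I would glue short and long jumps as in the proof of \autoref{thm:main}: fixing $x,y$ with $s-\eta\le c_0\phi(d(x,y))$, I choose $\rho$ comparable to $d(x,y)$ (with the proportionality constant tuned so that the exponent $-\tfrac{d(x,y)}{12\rho}+\theta$ is sufficiently negative, using $\phi(\rho)\ge C^{-1}(\rho/d(x,y))^{\alpha_2}\phi(d(x,y))$ and $\mu(B_\rho(x))\le C(\rho/d(x,y))^d\mu(B_{d(x,y)}(x))$ to convert the two terms in \eqref{eq:glueinglemma} into $c\,(s-\eta)\mu(B_{d(x,y)}(x))^{-1}\phi(d(x,y))^{-1}$); for the complementary range $s-\eta\ge c_0\phi(d(x,y))$ the on-diagonal bound already dominates by doubling. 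Taking the minimum of the two regimes yields the claimed estimate. The main obstacle I anticipate is the verification of the differential inequality for the weight $H$ in the mixed-type setting: unlike the pure $\alpha$-stable case, the kernel $\mu(B_{d(x,y)}(x))^{-1}\phi(d(x,y))^{-1}$ is only controlled through the two-sided scaling \eqref{eq:scaling} and \eqref{eq:VD}, so the estimates $\Gamma^\phi_\rho(H^{1/2},H^{1/2})\lesssim|\nabla H^{1/2}|^2\cdot(\text{local integral of the kernel against }d(x,y)^2)$ must be replaced by careful region-by-region bounds that do not rely on an explicit power of $d(x,y)$, and one has to make sure the gain from $\phi(\rho)/(\nu(s-\eta))>1$ still beats the $[\log(\cdot)]^2$ factors uniformly in $\rho$; choosing $\nu$ large depending only on $d,\alpha_1,\alpha_2,C$ should resolve this, but the bookkeeping is the delicate part.
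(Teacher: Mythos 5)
Your plan follows the paper's route closely, and the Aronson weight, the mixed-type $L^\infty$--$L^2$-estimate via Faber--Krahn, and the on-diagonal bound via the $L^\infty$--$L^1$-estimate are all set up correctly. The genuine gap is in the gluing step. You assert that $\Vert k-k_\rho\Vert_\infty\lesssim\mu(B_\rho(x))^{-1}\phi(\rho)^{-1}$ ``uniformly'', which would give $p\le p_\rho+c(s-\eta)\mu(B_\rho(x))^{-1}\phi(\rho)^{-1}$ exactly as in the Euclidean proof. But this is precisely what does \emph{not} carry over to a general doubling space: the ball volume $\mu(B_\rho(\cdot))$ varies with the center, so the tail function $K_\rho(z)\lesssim\mu(B_\rho(z))^{-1}\phi(\rho)^{-1}$ is genuinely $z$-dependent and typically unbounded on $M$, and what the parabolic maximum principle actually yields (see \eqref{eq:MDhelp1}, \eqref{eq:mixedMD}) is the weaker $p(y,s;x,\eta)\le p_\rho(y,s;x,\eta)+\int_\eta^s P^\rho_\tau K_\rho(y)\,\d\tau$. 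Since the semigroup $P^\rho_\tau$ spreads mass to points $z$ far from $y$ where $\mu(B_\rho(z))^{-1}$ may be much larger, and \eqref{eq:VD} only controls this by the polynomial loss $\mu(B_\rho(z))^{-1}\lesssim(d(z,y)/\rho)^d\,\mu(B_\rho(y))^{-1}$, you cannot simply pull a constant out.

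The paper resolves this by decomposing $K_\rho$ over dyadic annuli $B_{ck\rho}(y)\setminus B_{c(k-1)\rho}(y)$ (the display between \eqref{eq:mixedMD} and \eqref{eq:mixedMDpart}) and then invoking the weighted $L^2$-decay \eqref{eq:auxmixed} a \emph{second} time, applied to the initial data $\mathbbm{1}_{B_{ck\rho}(y)\setminus B_{c(k-1)\rho}(y)}$, to gain exponential decay in $k$ that beats the polynomial factors $k^{2d}$ coming from \eqref{eq:VD}. After summing the geometric series (which dictates a lower bound on the annulus scale $c$), the tail contribution is again $\lesssim(s-\eta)\mu(B_{d(x,y)}(x))^{-1}\phi(d(x,y))^{-1}$, matching the bound for $p_\rho$. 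So the Aronson-weight estimate is not a one-shot ingredient in the mixed-type setting: it must also be used to control the tail term in the Meyer decomposition. Once you add this annulus argument, the remainder of your bookkeeping (choice of $\rho\sim d(x,y)$, conversion via \eqref{eq:VD}, \eqref{eq:scaling}, and taking the minimum with the on-diagonal bound) is correct.
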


We remark that variants of \autoref{thm:mainmixed} for time-homogeneous jumping kernels of mixed type on doubling metric measure spaces can be found in several articles, e.g., \cite{ChKu08}, \cite{CKW21}, \cite{CKKW21}.

\begin{proof}
First of all, we observe that by \eqref{eq:VD}, \eqref{eq:scaling}, \eqref{eq:mixedcomp}:
\begin{align}
\label{eq:mixedint}
\int_{B_{R}(x)} d(x,y)^{2} k(t;x,y) \d y \le c R^2 \phi(R)^{-1}, ~~~~~~\int_{M \setminus B_{R}(x)} k(t;x,y) \d y \le c \phi(R)^{-1}.
\end{align}
for every $t \in (0,T)$, $x \in M$, $R > 0$. For a proof, see \cite{CKW21}.
Given $y \in M$, $\rho > 0$, $0 \le \eta < s < T$, $\nu > 1$ with $s-\eta \le \frac{1}{4\nu}\phi(\rho)$, we define $H_{y,\rho,\eta,s,\nu} = H : [\eta,s] \times M \to [0,\infty)$ via
\begin{equation*}
H(t,x) := \left(\frac{\phi(\rho)}{\nu[2(s-\eta) - (t-\eta)]}\right)^{-1} \wedge \left(\frac{\phi(\rho)}{\nu[2(s-\eta) - (t-\eta)]}\right)^{-\frac{d(x,y)}{3\rho}}.
\end{equation*}
With the help of \eqref{eq:mixedint}, \eqref{eq:scaling} it is easy to check along the lines of \autoref{lemma:H} that $H$ satisfies the assumptions of \autoref{lemma:Aronson}, namely for every $C > 0$ there exists $\nu > 1$ such that
\begin{align*}
C \Gamma^{\phi}_{\rho}(H^{1/2},H^{1/2}) \le -\partial_t H~~ \text{ in } (\eta,s) \times M,~~~~ H^{1/2} \in L^2((\eta,s);\mathcal{F}),
\end{align*}
where 
\begin{equation*}
\Gamma^{\phi}_{\rho}(H^{1/2},H^{1/2})(t,x) = \int_{B_{\rho}(x)} (H^{1/2}(t,x) - H^{1/2}(t,z))^2 \mu(B_{d(x,z)}(x))^{-1} \phi(d(x,z))^{-1} \d z.
\end{equation*}
Next, we observe that by \eqref{eq:VD}, \eqref{eq:mixedint}, \eqref{eq:scaling}, \eqref{eq:mixedcomp}, and \eqref{eq:FK} the following $L^{\infty}-L^2$-estimate holds true for local subsolutions $u$ to $\partial_t u - L_t^{\rho} u = 0$ in $I_R(t_0) \times B_R(x_0)$, where $I_R(t_0) = (t_0 - \phi(R),t_0)$:
\begin{equation}
\label{eq:mixedLinftyL2}
\sup_{I_{R/2}(t_0) \times B_{R/2}(x_0)} u \le C \left( \frac{\mu(B_{\rho}(x_0))}{\mu(B_R(x_0))} \right)^{\frac{1}{2}} \mu(B_R(x_0))^{-\frac{1}{2}} \sup_{t \in I_{R}(t_0)} \left( \int_{B_{2\rho}(x_0)} u^2(t,x) \d x \right)^{1/2}.
\end{equation}
We give a proof of this fact later, see \autoref{lemma:mixedLinftyL2}.\\
Applying $H$ to \autoref{lemma:Aronson}, we obtain from \eqref{eq:mixedLinftyL2} and the definition of $H$:
\begin{equation}
\label{eq:auxmixed}
\vert u(s,y) \vert \le c_1 \left( \frac{\mu(B_{\rho}(y))}{\mu(B_{\phi^{-1}(s-\eta)}(y))} \right)^{\frac{1}{2}} \mu(B_{\phi^{-1}(s-\eta)}(y))^{-\frac{1}{2}} 2^{\frac{\sigma}{6\rho}} \left(\frac{\phi(\rho)}{\nu(s-\eta)}\right)^{-\frac{\sigma}{6\rho} + \frac{1}{2}} \Vert u_0 \Vert_{L^2(D)}
\end{equation}
for $c_1 > 0$, where $u,u_0$ are as in \autoref{lemma:Aronson}, and $0 \le \eta < s < T$ with $s- \eta \le \frac{1}{4\nu} \phi(\rho)$, $\sigma > 0$.\\
Moreover, the following on-diagonal estimates hold for every $0 \le \eta < s < T$, $x,y \in M$, $\rho > 0$:
\begin{align}
\label{eq:mixedondiagp}
p(y,s;x,\eta) &\le c \mu(B_{\phi^{-1}(s-\eta)}(x))^{-1},\\
\label{eq:mixedondiagptrunc}
p_{\rho}(y,s;x,\eta) &\le c e^{c(s-\eta)\phi(\rho)^{-1}}\mu(B_{\phi^{-1}(s-\eta)}(x))^{-1},
\end{align}
for some $c > 0$. These estimates are proved in Lemma 5.1 and Section 4.4 in \cite{CKW21} using a stochastic approach. A more direct proof, using only analysis tools, goes via the $L^{\infty}-L^1$-estimate \eqref{eq:mixedLinftyL1}.\\
In fact, given $0 \le \eta < s < T$, $x,y \in M$, we apply \eqref{eq:mixedLinftyL1} to $(t,z) \mapsto p(y,t;z,\eta)$, choosing $R := \phi^{-1}(s-\eta)$, $x_0 := x$, $t_0 := s$. Using \eqref{eq:pint}, we obtain
\begin{equation*}
p(y,s;x,\eta) \le c \mu(B_{\phi^{-1}(s-\eta)}(x))^{-1} \sup_{t \in I_{\phi^{-1}(s-\eta)}(s)} \int_{M} p(y,t;z,\eta) \d z \le c \mu(B_{\phi^{-1}(s-\eta)}(x))^{-1},
\end{equation*}
as desired. \eqref{eq:mixedondiagptrunc} is a direct consequence of \eqref{eq:mixedondiagp} in the light of \eqref{eq:MDhelp2} and \eqref{eq:mixedint}. Note that the proof of \eqref{eq:MDhelp2} is written for $M = \R^d$ but works in the same way in the current setup.\\
Combining \eqref{eq:auxmixed} and \eqref{eq:mixedondiagptrunc}, we derive as in \autoref{thm:offdiagtrunc} for every $0 \le \eta < s < T$ with $s- \eta \le \frac{1}{4\nu} \phi(\rho)$, $x,y \in M$:
\begin{equation*}
p_{\rho}(y,s;x,\eta)\le c_2 \left( \frac{\rho}{\phi^{-1}(s-\eta)} \right)^{\frac{d}{2}} \mu(B_{\phi^{-1}(s-\eta)}(x))^{-\frac{1}{2}}\mu(B_{\phi^{-1}(s-\eta)}(y))^{-\frac{1}{2}} 2^{\frac{d(x,y)}{12\rho}} \left(\frac{\phi(\rho)}{\nu(s-\eta)}\right)^{-\frac{d(x,y)}{12\rho} + \frac{1}{2}}
\end{equation*}
for $c_2 > 0$. Finally, we explain how to deduce off-diagonal bounds for $p$. We get from \eqref{eq:MDhelp1}:
\begin{equation}
\label{eq:mixedMD}
p(y,s;x,\eta) \le p_{\rho}(y,s;x,\eta) + \int_{\eta}^s P_{\tau}^{\rho} K_{\rho}(y) \d \tau.
\end{equation}
We choose $\rho = \frac{d(x,y)}{12}\left(\frac{d+\alpha_1}{\alpha_1} + \frac{1}{2} + \frac{d}{2\alpha_1}\right)^{-1}$ and obtain by \eqref{eq:VD}, \eqref{eq:scaling}:
\begin{equation}
\label{eq:mixedtruncoffdiag}
\begin{split}
&p_{\rho}(y,s;x,\eta) \le c_2 \left( \frac{\rho}{\phi^{-1}(s-\eta)} \right)^{\frac{d}{2}} \left[\mu(B_{\phi^{-1}(s-\eta)}(x)) \mu(B_{\phi^{-1}(s-\eta)}(y))\right]^{-\frac{1}{2}} 2^{\frac{d(x,y)}{12\rho}} \left(\frac{\phi(\rho)}{\nu(s-\eta)}\right)^{-\frac{d(x,y)}{12\rho} + \frac{1}{2}}\\
&\le c_3 \left(\frac{d(x,y)}{\phi^{-1}(s-\eta)}\right)^{\frac{d}{2}} \left[\mu(B_{\phi^{-1}(s-\eta)}(x)) \mu(B_{\phi^{-1}(s-\eta)}(y))\right]^{-\frac{1}{2}} \left( \frac{\phi(d(x,y))}{s-\eta} \right)^{-\frac{3d}{2\alpha_1} - 1} \\
&\le c_4 \left(\frac{\phi^{-1}(\phi(d(x,y)))}{\phi^{-1}(s-\eta)}\right)^{\frac{d}{2}} \left(\frac{\mu(B_{\phi^{-1}(\phi(d(x,y)))}(x)) \mu(B_{\phi^{-1}(\phi(d(x,y)))}(y))}{\mu(B_{\phi^{-1}(s-\eta)}(x))\mu(B_{\phi^{-1}(s-\eta)}(y))}\right)^{\frac{1}{2}} \frac{\left( \frac{\phi(d(x,y))}{s-\eta} \right)^{-\frac{3d}{2\alpha_1} - 1} }{\mu(B_{d(x,y)}(x))}\\
&\le c_5 \left(\frac{\phi(d(x,y))}{s-\eta}\right)^{\frac{3d}{2\alpha_1} } \left( \frac{\phi(d(x,y))}{s-\eta} \right)^{-\frac{3d}{2\alpha_1} - 1} \mu(B_{d(x,y)}(x))^{-1}\\
&= c_5\frac{s-\eta}{\mu(B_{d(x,y)}(x)) \phi(d(x,y))}
\end{split}
\end{equation}
for $c_3,c_4,c_5 > 0$. Next, we estimate $\int_{\eta}^s P_{\tau}^{\rho} K_{\rho}(y) \d \tau$. For this, we compute by \eqref{eq:mixedcomp} and \eqref{eq:VD}:
\begin{equation*}
\begin{split}
\int_{\eta}^s P_{\tau}^{\rho} K_{\rho}(y) \d \tau &= \sum_{k=1}^{\infty} \int_{\eta}^s P_{\tau}^{\rho} \left[ \mathbbm{1}_{B_{ck\rho}(y) \setminus B_{c(k-1)\rho}(y)} K_{\rho} \right] (y) \d \tau\\
&\le c_6 \phi(d(x,y))^{-1}  \sum_{k=1}^{\infty} \int_{\eta}^s P_{\tau}^{\rho} \left[ \mathbbm{1}_{B_{ck\rho}(y) \setminus B_{c(k-1)\rho}(y)} \mu(B_{\rho}(\cdot))^{-1} \right] (y) \d \tau \\
&\le c_7\mu(B_{\rho}(x))^{-1}\phi(d(x,y))^{-1}  \sum_{k=1}^{\infty} k^{d}\int_{\eta}^s P_{\tau}^{\rho} \mathbbm{1}_{B_{ck\rho}(y) \setminus B_{c(k-1)\rho}(y)} (y) \d \tau
\end{split}
\end{equation*}
for $c_6,c_7 > 0$, and $c > 3 + \frac{6d}{\alpha_1}$. Using \eqref{eq:auxmixed}, \eqref{eq:VD} and \eqref{eq:scaling}, we estimate for $\tau \in (\eta,s)$, $k \ge 2$:
\begin{align*}
k^{d} P_{\tau}^{\rho} \mathbbm{1}_{B_{ck\rho}(y) \setminus B_{c(k-1)\rho}(y)} (y) &\le c_8 k^{d} \left( \frac{\mu(B_{k\rho}(y))}{\mu(B_{\phi^{-1}(\tau-\eta)}(y))} \right)  2^{\frac{c(k-1)}{6}} \left(\frac{\phi(\rho)}{\nu(\tau-\eta)}\right)^{-\frac{c(k-1)}{6} + \frac{1}{2}}\\
&\le c_9 k^{2d} 2^{\frac{c(k-1)}{6}} \left(\frac{\phi(\rho)}{\nu(\tau-\eta)}\right)^{-\frac{c(k-1)}{6} + \frac{1}{2} + \frac{d}{\alpha_1}}\\
&\le c_{10} k^{2d} 2^{\frac{c(k-1)}{6}} 4^{-\frac{c(k-1)}{6} + \frac{1}{2} + \frac{d}{\alpha_1}}
\end{align*}
for $c_8,c_9,c_{10} > 0$, where we use $s- \eta \le \frac{1}{4\nu} \phi(\rho)$. From \eqref{eq:pint}, it follows
\begin{equation}
\label{eq:mixedMDpart}
\begin{split}
\int_{\eta}^s P_{\tau}^{\rho} K_{\rho}(y) \d \tau &\le c_{11} \mu(B_{\rho}(x))^{-1} \phi(d(x,y))^{-1} \int_{\eta}^s \left( P^{\rho}_{\tau} \mathbbm{1}_{B_{c\rho}(y)}(y) + \sum_{k=2}^{\infty} k^{2d} 2^{\frac{ck}{6}} 4^{-\frac{ck}{6}}\right) \d \tau\\
&\le c_{12} \frac{s-\eta}{\mu(B_{d(x,y)}(x)) \phi(d(x,y))}
\end{split}
\end{equation}
for $c_{11},c_{12} > 0$. Combining \eqref{eq:mixedMD}, \eqref{eq:mixedtruncoffdiag}, \eqref{eq:mixedMDpart} we obtain the desired off-diagonal estimate for $s-\eta \le \frac{1}{4\nu}\phi(\rho)$. Together with the on-diagonal estimate \eqref{eq:mixedondiagp}, we deduce the desired result.
\end{proof}

\begin{remark}
Note that the proof of \autoref{thm:mainmixed} does not require the scaling argument from \cite{ChKu08} since we are working with an on-diagonal estimate and an $L^{\infty}-L^2$-estimate that take into account the parabolic scaling of the corresponding equation, see also \cite{CKKW21}.
\end{remark}

A central ingredient in the proof of \autoref{thm:mainmixed} is the $L^{\infty}-L^2$-estimate \eqref{eq:mixedLinftyL2} for subsolutions to $\partial_t u- L_t^{\rho} u = 0$. Its proof is similar to the proof of \autoref{lemma:truncL2}. However we will provide some details since this seems to be the first time that \eqref{eq:FK} is used for nonlocal parabolic $L^{\infty}-L^2$-estimates. Moreover, we provide an $L^{\infty}-L^1$-estimate for subsolutions to $\partial_t u - L_t u = 0$ which allows us to give a direct proof of the on-diagonal upper heat kernel estimate.
In the elliptic case, $L^{\infty}-L^2$- and $L^{\infty}-L^1$-estimates are established via \eqref{eq:FK} for example in \cite{CKW21}.

\begin{lemma}
\label{lemma:mixedLinftyL2}
Let $(M,d,\mu)$ and $k$ be as in \autoref{thm:mainmixed}. Then there exists $C_1 > 0$ such that for every $t_0 \in (0,T)$, $x_0 \in M$, $\rho, R > 0$ with $R \le \rho/2 \wedge \phi^{-1}(t_0)$ and every subsolution $u$ to $\partial_t u - L_t^{\rho} u = 0$ in $I_R(t_0) \times B_R(x_0)$ it holds: 
\begin{equation}
\label{eq:mixedtruncLinftyL2}
\sup_{I_{R/2}(t_0) \times B_{R/2}(x_0)} u \le C_1 \left( \frac{\mu(B_{\rho}(x_0))}{\mu(B_R(x_0))} \right)^{\frac{1}{2}} \mu(B_R(x_0))^{-\frac{1}{2}} \sup_{t \in I_{R}(t_0)} \left( \int_{B_{2\rho}(x_0)} u^2(t,x) \d x \right)^{1/2}.
\end{equation}
Moreover, there exists $C_2 > 0$ such that for every $t_0 \in (0,T)$, $x_0 \in M$, $R \le \phi^{-1}(t_0)$ and every subsolution $u$ to $\partial_t u - L_t u = 0$ in $I_R(t_0) \times B_R(x_0)$, with $u \ge 0$ in $I_R(t_0) \times B_R(x_0)$, it holds:
\begin{equation}
\label{eq:mixedLinftyL1}
\sup_{I_{R/2}(t_0) \times B_{R/2}(x_0)} u \le C_2 \mu(B_R(x_0))^{-1} \sup_{t \in I_{R}(t_0)} \int_{M} |u(t,x)| \d x.
\end{equation}
\end{lemma}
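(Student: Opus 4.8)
The plan is to imitate the proof of \autoref{lemma:truncL2} almost verbatim, replacing the fractional Sobolev inequality by the Faber--Krahn inequality \eqref{eq:FK}, the explicit scaling factors $t^{\alpha}$ and $|x-y|^{-d-\alpha}$ by $\phi$ and the mixed upper bound \eqref{eq:mixedcomp}, and using \eqref{eq:VD}, \eqref{eq:scaling} and \eqref{eq:mixedint} for all volume and scaling bookkeeping. First I would record the parabolic Caccioppoli inequality for a subsolution $u$ to $\partial_t u - L_t^{\rho} u = 0$ in $I_R(t_0) \times B_R(x_0)$: for $0 < r \le R \le \rho/2$ and $l > 0$, with $w_l = (u-l)_+$, the quantity $\sup_{t \in I_R}\int_{B_R} w_l^2\,\d x$ plus the localized energy of $w_l$ over $I_R \times B_R \times B_R$ is bounded by the sum of a bulk term $\sigma(R,r)\iint_{I_{R+r}\times B_{R+r}} w_l^2$ and a tail term $\Vert w_l\Vert_{L^1(I_{R+r}\times B_{R+r})}\cdot\mathcal{T}_l$, where $\sigma(R,r)$ comes from the first estimate in \eqref{eq:mixedint} and $\mathcal{T}_l = \sup_{t \in I_{R+r}}\sup_{x \in B_{R+r/2}(x_0)}\int_{B_{R+r}(x_0)^c} w_l(t,y) k_{\rho}(t;x,y)\,\d y$; this is obtained by exactly the test-function computation of \cite{Str18} (compare \cite{CKW21}). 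Since $k_{\rho}$ is truncated, \eqref{eq:mixedcomp} together with \eqref{eq:VD} and \eqref{eq:scaling} bounds $\mathcal{T}_l$ by $c\,\phi(r)^{-1}\big(\mu(B_{\rho}(x_0))/\mu(B_r(x_0))\big)\,\sup_{t}\dashint_{B_{2\rho}(x_0)}|u(t,\cdot)|\,\d x$, which is the origin of the suboptimal prefactor $(\mu(B_{\rho})/\mu(B_R))^{1/2}$ in \eqref{eq:mixedtruncLinftyL2}.

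The crucial new step, replacing ``H\"older $+$ fractional Sobolev'', is to manufacture from \eqref{eq:FK} and Caccioppoli a self-improving inequality carrying a genuine gain in $A(l,R) := |\{(t,x) \in I_R \times B_R : u(t,x) > l\}|$. At each time slice I apply \eqref{eq:FK} to $x \mapsto w_l(t,x)\gamma(x)$, where $\gamma$ is a spatial cutoff with $\gamma \equiv 1$ on $B_R(x_0)$ and $\supp\gamma \subset B_{R+r}(x_0)$, taking $D = D_t := \{x \in B_{R+r}(x_0) : u(t,x) > l\}$; this gives $\Vert w_l(t)\gamma\Vert_{L^2}^2 \le c\,\phi(R+r)\,(\mu(D_t)/\mu(B_{R+r}(x_0)))^{\nu}\,\cE_t(w_l(t)\gamma, w_l(t)\gamma)$. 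Interpolating the $L^2$-norm in time using $\sup_t \Vert w_l\gamma\Vert_{L^2}^2$ with exponent $\theta = \nu/(\nu+1)$, applying H\"older in $t$ with exponents $\nu+1$ and $(\nu+1)/\nu$, and bounding the resulting factor $\sup_t\Vert w_l\gamma\Vert_{L^2}^2 + \int_{I_R}\cE_t(w_l\gamma, w_l\gamma)\,\d t$ by the right-hand side of the Caccioppoli inequality (after a standard cutoff estimate for $\cE_t(w_l\gamma, w_l\gamma)$) yields
\begin{equation*}
\int_{I_R}\int_{B_R} w_l^2\,\d x\,\d t \;\le\; C\,\phi(R)^{\frac{1}{\nu+1}}\,\mu(B_R(x_0))^{-\frac{\nu}{\nu+1}}\,A(l,R)^{\frac{\nu}{\nu+1}}\,\Xi_l,
\end{equation*}
where $\Xi_l$ is the sum of the bulk and tail terms from the Caccioppoli inequality. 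This is the exact analogue of the inequality preceding the iteration in the proof of \autoref{lemma:truncL2}, now with gain exponent $\delta := \nu/(\nu+1) > 0$; in the Euclidean situation \eqref{eq:FK} holds with $\nu = \alpha/d$ and $\delta = \alpha/(d+\alpha) = 1/\kappa'$, so the two proofs coincide. This step is the main obstacle: one must combine the Faber--Krahn volume ratio $(\mu(D_t)/\mu(B_R))^{\nu}$, whose exponent $\nu$ is a priori arbitrary, with Caccioppoli so as to produce the gain $A(l,R)^{\delta}$ with $\delta > 0$, while carefully tracking how $\mu(B_{\rho})$, $\mu(B_R)$, $\phi$, and the doubling and scaling constants propagate through the iteration.

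Granting this, the De Giorgi iteration is run exactly as in \autoref{lemma:truncL2}: set $l_i = M(1 - 2^{-i})$, $r_i = 2^{-i-1}R$, $R_0 = R$, $R_{i+1} = R_i - r_{i+1}$, and $A_i = \iint_{I_{R_i}\times B_{R_i}} w_{l_i}^2$. Chebyshev gives $A(l_i, R_{i-1}) \le (l_i - l_{i-1})^{-2} A_{i-1}$, hence a recursion $A_i \le C^i\, M^{-2\delta}\, \Theta\, A_{i-1}^{1+\delta}$ for a constant $C > 1$ and a factor $\Theta$ collecting $\phi(R)^{-1}$, the scaling factor $\sigma$, and the volume ratio $\mu(B_{\rho}(x_0))/\mu(B_R(x_0))$. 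Choosing $M$ to be a suitable multiple of $(\mu(B_{\rho}(x_0))/\mu(B_R(x_0)))^{1/2}\,\mu(B_R(x_0))^{-1/2}\,\sup_{t \in I_R}(\int_{B_{2\rho}(x_0)} u^2)^{1/2}$ (plus, as in \autoref{lemma:truncL2}, the contribution coming from the tail, which is dominated by this $L^2$ term via Cauchy--Schwarz and $R \le \rho/2$) and invoking Lemma 7.1 of \cite{Giu03} forces $A_i \to 0$, i.e.\ $\sup_{I_{R/2}(t_0) \times B_{R/2}(x_0)} u \le M$, which is \eqref{eq:mixedtruncLinftyL2}.

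For the $L^{\infty}-L^1$ estimate \eqref{eq:mixedLinftyL1} I would run the same scheme for the non-truncated operator $L_t$ and a nonnegative local subsolution $u$ to $\partial_t u - L_t u = 0$. The only difference is the tail term: for $x \in B_{R+r/2}(x_0)$ and $y \notin B_{R+r}(x_0)$ one has $d(x,y) \ge r/2$, so \eqref{eq:mixedcomp} together with \eqref{eq:VD} and \eqref{eq:scaling} yields $k(t;x,y) \le c\,(R/r)^{d}\,\mu(B_R(x_0))^{-1}\,\phi(r)^{-1}$, whence, using $u \ge 0$, the tail is bounded by $c\,(R/r)^{d}\,\mu(B_R(x_0))^{-1}\,\phi(r)^{-1}\,\sup_t\Vert u(t)\Vert_{L^1(M)}$. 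Carrying this through the iteration over a pair of nested cylinders $I_{\sigma}(t_0)\times B_{\sigma}(x_0) \subset I_{\sigma'}(t_0)\times B_{\sigma'}(x_0)$ with $R/2 \le \sigma < \sigma' \le R$ produces a bound of the form
\begin{equation*}
\sup_{I_{\sigma}(t_0)\times B_{\sigma}(x_0)} u \;\le\; C(\sigma',\sigma)\left[\Big(\dashint_{I_{\sigma'}(t_0)}\dashint_{B_{\sigma'}(x_0)} u^2\,\d x\,\d t\Big)^{1/2} + \mu(B_R(x_0))^{-1}\sup_t\Vert u(t)\Vert_{L^1(M)}\right].
\end{equation*}
Since $\iint_{I_{\sigma'}\times B_{\sigma'}} u^2 \le \big(\sup_{I_{\sigma'}\times B_{\sigma'}} u\big)\,|I_{\sigma'}|\,\sup_t\Vert u(t)\Vert_{L^1(M)}$ (again using $u \ge 0$), Young's inequality absorbs $\tfrac12\sup_{I_{\sigma'}\times B_{\sigma'}} u$ into the left-hand side, and the standard iteration lemma (e.g.\ Lemma 7.1 of \cite{Giu03}) then gives $\sup_{I_{R/2}(t_0)\times B_{R/2}(x_0)} u \le C\,\mu(B_R(x_0))^{-1}\sup_t\Vert u(t)\Vert_{L^1(M)}$, which is \eqref{eq:mixedLinftyL1}.
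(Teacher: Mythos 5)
Your proposal is correct and follows the paper's strategy (Caccioppoli $+$ Faber--Krahn $+$ De~Giorgi iteration for \eqref{eq:mixedtruncLinftyL2}; a modified tail estimate $+$ interpolation $+$ absorption lemma for \eqref{eq:mixedLinftyL1}), but the way you fold \eqref{eq:FK} into the iteration differs from the paper in a way worth noting. You apply \eqref{eq:FK} slicewise to $w_l(t)\gamma$, interpolate $\|w_l\gamma\|_{L^2}^2=\|w_l\gamma\|_{L^2}^{2\theta}\|w_l\gamma\|_{L^2}^{2(1-\theta)}$ with $\theta=\nu/(\nu+1)$, and use H\"older in $t$ with exponents $(\nu+1)/\nu$ and $\nu+1$ to produce the gain $A(l,R)^{\nu/(\nu+1)}$; this gives a recursion $A_i\lesssim C^i M^{-2\delta}\Theta A_{i-1}^{1+\delta}$ with $\delta=\nu/(\nu+1)$, and as you observe this reproduces $\delta=1/\kappa'$ of \autoref{lemma:truncL2} when $\nu=\alpha/d$. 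The paper proceeds differently: it first uses Chebyshev at level $(k+l)/2$ together with the $\sup_t$ part of Caccioppoli to bound $\sup_t\mu(D_t)$, then plugs that constant (in $t$) ratio into \eqref{eq:FK} and integrates in time, obtaining a recursion with iteration exponent $1+\nu$ rather than $1+\nu/(\nu+1)$. Both recursions are admissible for Lemma~7.1 of \cite{Giu03} and yield the same final estimate; your version is arguably the more direct analogue of the Moser/Gagliardo--Nirenberg interpolation used in \autoref{lemma:truncL2}, while the paper's avoids the time-H\"older step at the cost of invoking the $\sup_t$ Caccioppoli bound twice.

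Two small corrections. First, in the last step of the $L^\infty$--$L^1$ argument you cite Lemma~7.1 of \cite{Giu03}; what is actually needed there (and what the paper invokes) is the Giaquinta--Giusti absorption lemma, Lemma~1.1 of \cite{GiGu82}, which handles estimates of the form $f(\sigma)\le\tfrac12 f(\sigma')+C(\sigma'-\sigma)^{-\delta}B$ on nested radii; Lemma~7.1 of \cite{Giu03} is the geometric-decay lemma used inside the De~Giorgi level-set iteration and does not apply here. Second, the phrase ``Young's inequality absorbs $\tfrac12\sup_{I_{\sigma'}\times B_{\sigma'}}u$ into the left-hand side'' is a slight misstatement: the half-sup stays on the right over the \emph{larger} cylinder, and it is precisely the absorption lemma over nested radii, not a direct move to the left side, that closes the argument. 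Neither of these affects the correctness of your strategy.
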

We refer to \autoref{sec:appendix} for the definition of a subsolution. $H^{\alpha/2}(\R^d)$ should be replaced by $\mathcal{F}$.
\begin{proof}
First, we prove \eqref{eq:mixedtruncLinftyL2}. Let $l > k > 0$, $0 < r \le R \le \rho/2$, $A(l,R) : = \vert \lbrace (t,x) \in I_{R} \times B_R(x_0) : u(t,x) > l \rbrace \vert$.
Let $u$ be a subsolution to $\partial_t u - L_t^{\rho} u = 0$. First, observe that for every $t \in I_{R+r}$:
\begin{equation*}
\sup_{x \in B_R(x_0)} \int_{B_{\rho}(x) \setminus B_{R+ \frac{r}{2}}(x_0)} \vert u(t,y) \vert k(t;x,y) \d y \le c_1 \sigma'(R,r) \dashint_{B_{2\rho}(x_0)} \vert u(t,y) \vert \d y,
\end{equation*}
where $c_1 > 0$, $\sigma'(R,r) := \sup_{x \in B_R(x_0)} \phi(r)^{-1} \frac{\mu(B_{\rho}(x_0))}{\mu(B_{r}(x))}$ and we used \eqref{eq:mixedcomp}.
By Caccioppoli's inequality and \eqref{eq:mixedint}, every subsolution $u$ to $\partial_t u - L_t^{\rho} u = 0$ in $I_R \times B_R(x_0)$ satisfies:
\begin{align*}
\sup_{t \in I_R} &\left\vert A\left(l,R+\frac{r}{2}\right)\right\vert \le \frac{1}{2}(l-k)^{-2}\sup_{t \in I_R}\int_{B_{R+\frac{r}{2}}(x_0)} w_{\frac{k+l}{2}}^2(t,x) \d x\\
&\le c_2 (l-k)^{-2}\left(\sigma(R,r) + \frac{\sigma'(R,r)}{l-k} \sup_{t \in I_{R+r}} \dashint_{B_{2\rho}(x_0)} \vert u(t,x) \vert \d x \right) \int_{I_{R+r}} \int_{B_{R+r}(x_0)} w_k^2(t,x) \d x \d t,
\end{align*}
where $c_2 > 0$, $\sigma(R,r) = \phi(r)^{-1} \vee (\phi(R+r)-\phi(R))^{-1}$ and
\begin{align*}
\phi\left(R+\frac{r}{2}\right)^{-1} &\int_{I_{R}} \left( \frac{\mu( B_{R+\frac{r}{2}}(x_0) )}{A(l,R+\frac{r}{2})}\right)^{\nu} \int_{B_R(x_0)} w_l^2(t,x) \d x \d t \le c_3\int_{I_{R}} \cE_t(\tau w_l(t) , \tau w_l(t)) \d t \\
&\le c_4 \left(\sigma(R,r) + \frac{\sigma'(R,r)}{l-k} \sup_{t \in I_{R+r}} \dashint_{B_{2\rho}(x_0)} \vert u(t,x) \vert \d x \right) \int_{I_{R+r}} \int_{B_{R+r}(x_0)} w_k^2(t,x) \d x \d t,
\end{align*}
where $c_3,c_4 > 0$, $\tau \in C^{\infty}_c(\R^d)$ is an arbitrary function such that $\tau \equiv 1$ in $B_R(x_0)$, $\tau \equiv 0$ in $B_{R+\frac{r}{2}}(x_0)$, $\Vert \nabla \tau \Vert_{\infty} \le 4 r^{-1}$, and we used \eqref{eq:FK}.
By combination of the foregoing two estimates, we obtain for some $c_5 > 0$:
\begin{equation}
\label{eq:mixedCacc}
\begin{split}
\int_{I_R}&\int_{B_R(x_0)} w_l^2(t,x) \d x \d t \le c_5\left(\int_{I_{R+r}} \int_{B_{R+r}(x_0)} w_k^2(t,x) \d x \d t\right)^{1+\nu} \times\\
& \times\frac{\phi(R+\frac{r}{2})}{\mu( B_{R + \frac{r}{2}}(x_0))^{\nu}} (l-k)^{-2\nu} \left(\sigma(R,r) + \frac{\sigma'(R,r)}{l-k} \sup_{t \in I_{R+r}} \dashint_{B_{2\rho}(x_0)} \vert u(t,x) \vert \d x \right)^{1+\nu}.
\end{split}
\end{equation}

Let us now fix $R \in (0,\rho/2]$ and define sequences $l_i = M(1-2^{-i})$, for $M > 0$ to be defined later, $r_i = 2^{-i-1}R$, $R_{i+1} = R_i - r_{i+1}$, $R_0 := R$, $A_i = \int_{I_{R_i}} \int_{B_{R_i}(x_0)} w_{l_i}^2(t,x) \d x \d t$. We deduce:
\begin{align*}
A_i &\le c_6 (l_i - l_{i-1})^{-2\nu} \frac{\phi(R_i)}{\mu( B_{R_{i-1}}(x_0))^{\nu}} \left(\sigma(R_i,r_i) + \frac{\sigma'(R_i,r_i)}{l_i - l_{i-1}} \sup_{t \in I_{R_{i-1}}} \dashint_{B_{2\rho}(x_0)} \vert u(t,x) \vert \d x \right)^{1+\nu}A_{i-1}^{1+\nu}\\
&\le c_7 2^{\gamma i} M^{-2\nu} (\phi(R)\mu( B_R(x_0)))^{-\nu} \left( 1 + M^{-1}\frac{\mu( B_{\rho}(x_0))}{\mu( B_{R}(x_0))} \sup_{t \in I_{R}} \dashint_{B_{2\rho}(x_0)} \vert u(t,x) \vert \d x \right)^{1+\nu}A_{i-1}^{1+\nu}
\end{align*}
for some $\gamma,c_6,c_7 > 0$, using that $\sigma(R_i,r_i) \le c_8 2^{\gamma_1 i} \phi(R)^{-1}$, $\sigma'(R_i,r_i) \le c_9 2^{\gamma_2 i} \phi(R)^{-1} \frac{\mu( B_{\rho}(x_0))}{\mu( B_{R}(x_0))}$ for some $c_8,c_9,\gamma_1,\gamma_2 > 0$. The latter follows from the fact that for all $x \in B_{R}(x_0)$: $\frac{\mu(B_{\rho}(x_0))}{\mu(B_{r_i}(x))} = \frac{\mu(B_{\rho}(x_0))}{\mu(B_{R}(x))} \frac{\mu(B_{R}(x))}{\mu(B_{r_i}(x))} \le c_{10} 2^{\gamma_3 i}\frac{\mu(B_{\rho}(x_0))}{\mu(B_{R}(x_0))} $, where $c_{10},\gamma_3 > 0$. Let us choose $c_{11} = c_7 2^{1+\gamma}$, and
\begin{equation*}
M := \frac{\mu( B_{\rho}(x_0))}{\mu( B_{R}(x_0))} \sup_{t \in I_{R}} \dashint_{B_{2\rho}(x_0)} \vert u(t,x) \vert \d x + 2^{-\frac{\gamma}{2\nu^2}} c_{11}^{\frac{1}{2\nu}} (\phi(R)\mu(B_{R}(x_0)))^{-\frac{1}{2}} A_0^{1/2}.
\end{equation*}
Hence:
\begin{align*}
A_i \le (c_{11} M^{-2\nu} (\phi(R)\mu(B_{R}(x_0)))^{-\nu} ) 2^{\gamma i} A_{i-1}^{1+\nu}, ~~ 
A_0 \le 2^{-\frac{\gamma}{\nu^2}} (c_{11} M^{-2\nu}(\phi(R)\mu(B_{R}(x_0)))^{-\nu})^{-\frac{1}{\nu}},
\end{align*}
and we can apply Lemma 7.1 in \cite{Giu03} to deduce that for some $c_{12} > 0$:
\begin{align*}
\sup_{I_{R/2} \times B_{R/2}(x_0)} u &\le \frac{\mu( B_{\rho}(x_0))}{\mu( B_{R}(x_0))} \sup_{t \in I_{R}} \dashint_{B_{2\rho}(x_0)} \vert u(t,x) \vert \d x + c_7 \left(\dashint_{I_R} \dashint_{B_R(x_0)} u^2(t,x) \d x \d t\right)^{1/2}\\
&\le c_{12} \left( \frac{\mu( B_{\rho}(x_0))}{\mu( B_{R}(x_0))} \right)^{\frac{1}{2}} (\mu( B_{R}(x_0))^{-\frac{1}{2}} \sup_{t \in I_{R}} \left( \int_{B_{2\rho}(x_0)} u^2(t,x) \d x \right)^{1/2}.
\end{align*}
This proves \eqref{eq:mixedtruncLinftyL2}. Let us now demonstrate how to prove \eqref{eq:mixedLinftyL1}. Let $u$ be a subsolution to $\partial_t u - L_t u = 0$. First, we provide a different estimate of the tail term. For every $t \in I_{R+r}$:
\begin{equation*}
\sup_{x \in B_R(x_0)} \int_{M \setminus B_{R+ \frac{r}{2}}(x_0)} \vert u(t,y) \vert k(t;x,y) \d y \le c_{13} \widetilde{\sigma}'(R,r)\int_{M} \vert u(t,y) \vert \d y,
\end{equation*}
where $\widetilde{\sigma}'(R,r) := \sup_{x \in B_{R}(x_0)} \mu(B_r(x))^{-1} \phi(r)^{-1}$ and we applied \eqref{eq:mixedcomp}. As in \eqref{eq:mixedCacc}, we get:
\begin{align*}
\int_{I_R}&\int_{B_R(x_0)} w_l^2(t,x) \d x \d t \le c_{14}\left(\int_{I_{R+r}} \int_{B_{R+r}(x_0)} w_k^2(t,x) \d x \d t\right)^{1+\nu} \times\\
& \times\frac{\phi(R+\frac{r}{2})}{\mu( B_{R + \frac{r}{2}}(x_0))^{\nu}} (l-k)^{-2\nu} \left(\sigma(R,r) + \frac{\widetilde{\sigma}'(R,r)}{l-k} \sup_{t \in I_{R+r}} \int_{M} \vert u(t,x) \vert \d x \right)^{1+\nu}
\end{align*}
for some $c_{14} > 0$. From now on, let $\overline{R} > 0$ be fixed. Moreover, let $0 < \overline{R}/2 \le r < R \le \overline{R}$ and define sequences $l_i = M(1-2^{-i})$, for $M > 0$ to be defined later, $r_i = 2^{-i-1}(R-r)$, $R_{i+1} = R_i - r_{i+1}$, $R_0 := R$, $A_i = \int_{I_{R_i}} \int_{B_{R_i}(x_0)} w_{l_i}^2(t,x) \d x \d t$ and deduce
\begin{align*}
A_i &\le c_{16} \frac{2^{\gamma i}}{M^{2\nu}} \frac{\phi(R)}{\mu( B_R(x_0))^{\nu}} \left( \frac{R}{R-r} \frac{1}{\phi(R-r)} + \frac{1}{M} \left( \frac{R}{R-r} \right)^{d}\frac{\phi(R-r)^{-1}}{\mu( B_{R}(x_0))} \sup_{t \in I_{R}} \int_{M} \vert u(t,x) \vert \d x \right)^{1+\nu} A_{i-1}^{1+\nu} \\
&\le c_{17} \frac{2^{\gamma i}}{M^{2\nu}} \frac{\left(\frac{R}{R-r}\right)^{\alpha_2+1}}{(\phi(R-r)\mu( B_R(x_0)))^{\nu}} \left( 1 + \frac{1}{M} \frac{\left( \frac{R}{R-r} \right)^{d-1}}{\mu( B_{R}(x_0))} \sup_{t \in I_{R}} \int_{M} \vert u(t,x) \vert \d x \right)^{1+\nu}A_{i-1}^{1+\nu},
\end{align*}
for $c_{16},c_{17},\gamma > 0$, using \eqref{eq:scaling} and that by \eqref{eq:VD}: $\mu(B_{R_i}(x_0)) \ge \mu(B_{\overline{R}/2}(x_0)) \ge c_{18} \mu(B_{\overline{R}}(x_0)) \ge c_{18}\mu(B_{R}(x_0))$, $\sigma(R_i,r_i) \le c_{19}2^{\gamma_4 i}\frac{R}{R-r}\phi(R-r)^{-1}$, $\widetilde{\sigma}'(R_i,r_i) \le c_{20} 2^{\gamma_5 i} \phi(R-r)^{-1} \left( \frac{R}{R-r} \right)^d \mu(B_R(x_0))^{-1}$.
The latter follows from the fact that for all $x \in B_R(x_0)$: $\mu(B_{r_i}(x))^{-1} = \frac{\mu(B_R)(x)}{\mu(B_{r_i}(x))} \mu(B_R(x))^{-1} \le c_{21} 2^{\gamma_6 i} \left( \frac{R}{R-r} \right)^d \mu(B_R(x_0))^{-1}$. We choose $c_{22} = c_{17}2^{1+\gamma}$ and
\begin{equation*}
M := \frac{\left( \frac{R}{R-r} \right)^{d-1}}{\mu( B_{R}(x_0))} \sup_{t \in I_{R}} \int_{M} \vert u(t,x) \vert \d x + 2^{-\frac{\gamma}{2 \nu^2}} c_{22}^{\frac{1}{2\nu}} \left[ \frac{\left(\frac{R}{R-r}\right)^{\alpha_2+1}}{(\phi(R-r)\mu( B_R(x_0)))^{\nu}} \right]^{\frac{1}{2\nu}} A_0^{1/2}
\end{equation*}
and deduce by arguments analogous to those in the first part of the proof:
\begin{align*}
\sup_{I_r \times B_r(x_0)} u &\le \left( \frac{R}{R-r} \right)^{d-1}\frac{1}{\mu( B_{R}(x_0))} \sup_{t \in I_{R}} \int_{M} \vert u(t,x) \vert \d x\\
&+ c_{23} \left[\frac{\left(\frac{R}{R-r}\right)^{\alpha_2+1}}{(\phi(R-r)\mu( B_R(x_0)))^{\nu}}\right]^{\frac{1}{2\nu}} \left(\int_{I_R} \int_{B_R(x_0)} u^2(t,x) \d x \d t \right)^ {1/2}\\
&= I_1 + I_2,
\end{align*}
where $c_{23} > 0$. We further estimate
\begin{align*}
I_2 &\le \left( \frac{R}{R-r} \right)^{ \frac{\alpha_2(1 + \nu) + 1}{2\nu}} \sup_{t \in I_R}\left(\dashint_{B_R(x_0)} u^2(t,x) \d x \right)^{1/2}\\
&\le \frac{1}{2} \sup_{I_R \times B_R(x_0)} u + c_{24} \left( \frac{R}{R-r} \right)^{ \frac{\alpha_2(1 + \nu) + 1}{\nu}} \sup_{t \in I_R} \dashint_{B_R(x_0)} \vert u(t,x) \vert \d x,
\end{align*} 
where $c_{24} > 0$ and we applied \eqref{eq:scaling} and H\"older's and Young's inequality. Together, we obtain
\begin{equation*}
\sup_{I_r \times B_r(x_0)} u  \le \frac{1}{2} \sup_{I_R \times B_R(x_0)} u + c_{25}\left( \frac{R}{R-r} \right)^{\delta} \frac{1}{\mu(B_R(x_0))} \sup_{t \in I_R} \int_M \vert u(t,x) \vert \d x
\end{equation*}
for $c_{25} > 0$ and $\delta := d-1 \vee \frac{\alpha_2(1 + \nu) + 1}{\nu}$. We can apply Lemma 1.1 in \cite{GiGu82} to the estimate above and deduce that there exists $c_{26} > 0$ such that for every $0 < \overline{R}/2 \le r < R \le \overline{R}$:
\begin{equation}
\sup_{I_r \times B_r(x_0)} u \le c_{26} \left( \frac{\overline{R}}{R-r} \right)^{\delta} \frac{1}{\mu(B_{\overline{R}}(x_0))} \sup_{t \in I_{\overline{R}}} \int_M \vert u(t,x) \vert \d x.
\end{equation}
Choosing $r = \overline{R}/2, R = \overline{R}$ implies the desired result \eqref{eq:mixedLinftyL1}.
\end{proof}

\section{Appendix}
\label{sec:appendix}

The main goal of this section is to give a proof of \autoref{lemma:MeyersDec} via analysis methods. We mainly follow the strategy carried out in \cite{GrHu08} modifying some of their arguments due to the time-inhomogeneity of the jumping kernel.

First, we introduce the notion of a subsolution to $\partial_t u - L_t u = 0$ in $I \times \Omega$ for some open interval $I \subset (\eta,T)$ and some open set $\Omega \subset \R^d$.

\begin{definition}
Let $\Omega \subset \R^d$ be open and bounded. We say that a function $u \in L^2_{loc}(I;H^{\alpha/2}(\R^d))$ with $\partial_t u \in L^1_{loc}(I ; L^2_{loc}(\Omega))$ is a subsolution to
\begin{align}
\label{eq:localPDE}
\partial_t u - L_t u = 0, ~~ \text{ in } I \times \Omega,
\end{align}
if for every $\phi \in H^{\alpha/2}(\R^d)$ with $\phi \equiv 0$ in $\R^d \setminus \Omega$ and $\phi \ge 0$:
\begin{align}
\label{eq:defsubsol}
\int_{\R^d} \partial_t u(t,x) \phi(x) \d x + \cE_t(u(t),\phi) \le 0, ~~ \text{ a.e. } t \in I.
\end{align}
In this case we say that $u$ solves $\partial_t u - L_t u \le 0$ in $I \times \Omega$. $u$ is a solution to \eqref{eq:localPDE} if \eqref{eq:defsubsol} holds for any $\phi \in H^{\alpha/2}(\R^d)$ with $\phi \equiv 0$ in $\R^d \setminus \Omega$.  (Sub-)solutions to the corresponding $\rho$-truncated problem are defined accordingly, replacing $k$ by $k_{\rho}$.
\end{definition}

The main ingredient in the proof of \autoref{lemma:MeyersDec} is the following parabolic maximum principle, which is an analog of Proposition 4.11 in \cite{GrHu08}:
\begin{lemma}[parabolic maximum principle]
\label{lemma:pmp}
Let $\Omega \subset \R^d$ be open. Assume that $k$ satisfies \eqref{eq:kupper}, \eqref{eq:klower}. Assume that $u$ solves
\begin{equation}
\begin{cases}
&\partial_t u - L_t u \le 0,~  \text{in } (\eta,T) \times \Omega,\\
&u_+(t) \in H^{\alpha/2}_{\Omega}(\R^d), ~~ \forall t \in (\eta,T),\\
&u_+(t) \to 0, ~ \text{in } L^2(\Omega), ~ \text{as } t \searrow \eta.
\end{cases}
\end{equation}

Then $u \le 0$ a.e. in $(\eta,T) \times \Omega$. The same result holds for subsolutions to $\partial_t u - L_t^{\rho} u = 0$.
\end{lemma}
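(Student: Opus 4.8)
The plan is to run the standard energy argument for the parabolic maximum principle: test the subsolution inequality against its own positive part and exploit the sign of the nonlocal bilinear form. First I would observe that, by hypothesis, for a.e.\ $t\in(\eta,T)$ the function $u_+(t)$ is a nonnegative element of $H^{\alpha/2}_\Omega(\R^d)$, hence an admissible test function in the definition of a subsolution; inserting $\phi=u_+(t)$ gives
\[
\int_{\R^d} \partial_t u(t,x)\, u_+(t,x)\,\d x + \cE_t\big(u(t),u_+(t)\big) \le 0, \qquad \text{a.e. } t \in (\eta,T).
\]
(The exceptional null set of $t$'s a priori depends on the test function; this dependence is removed by a routine approximation, using separability of $H^{\alpha/2}_\Omega(\R^d)$ together with continuity of both terms in the test function, the continuity of $\cE_t$ relying on \eqref{eq:kupper}.)

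Next I would treat the two terms separately. For the nonlocal term, the elementary pointwise inequality $(a-b)(a_+-b_+)\ge (a_+-b_+)^2\ge 0$ for $a,b\in\R$, combined with the symmetry and nonnegativity of $k$, yields
\[
\cE_t\big(u(t),u_+(t)\big) \ge \int_{\R^d}\int_{\R^d}\big(u_+(t,x)-u_+(t,y)\big)^2 k(t;x,y)\,\d y\,\d x \ge 0;
\]
note that only nonnegativity of $k$ enters here, not \eqref{eq:klower}. For the time term I would use the chain rule $\partial_t u\cdot u_+=\tfrac12\partial_t(u_+^2)$, integrate the displayed inequality over $(s,\tau)$ with $\eta<s<\tau<T$, and obtain
\[
\tfrac12\|u_+(\tau)\|_{L^2(\Omega)}^2 - \tfrac12\|u_+(s)\|_{L^2(\Omega)}^2 \le -\int_s^\tau \cE_t\big(u(t),u_+(t)\big)\,\d t \le 0.
\]
The one genuinely technical point is justifying $\int_s^\tau\!\int_{\R^d} \partial_t u\, u_+\,\d x\,\d t=\tfrac12\big[\|u_+(\tau)\|_{L^2(\Omega)}^2-\|u_+(s)\|_{L^2(\Omega)}^2\big]$, since $\partial_t u$ lies only in $L^1_{\loc}((\eta,T);L^2_{\loc}(\Omega))$. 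I would handle this by first replacing $u$ with its Steklov average (time mollification) $u_h$, for which the identity is classical, testing the averaged equation, and passing to the limit $h\to 0$ using convergence of $u_h\to u$ in $L^2_{\loc}((\eta,T);H^{\alpha/2}(\R^d))$ and of $\partial_t u_h\to\partial_t u$ in $L^1_{\loc}((\eta,T);L^2_{\loc}(\Omega))$.

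Finally, the last display shows that $\tau\mapsto\|u_+(\tau)\|_{L^2(\Omega)}^2$ is nonincreasing on $(\eta,T)$; letting $s\searrow\eta$ and invoking the hypothesis $u_+(s)\to 0$ in $L^2(\Omega)$ forces $\|u_+(\tau)\|_{L^2(\Omega)}=0$ for every $\tau\in(\eta,T)$, i.e.\ $u\le 0$ a.e.\ in $(\eta,T)\times\Omega$. The argument for the $\rho$-truncated problem is word for word the same: $k_\rho$ is again symmetric, nonnegative and dominated by \eqref{eq:kupper}, and the pointwise inequality above is insensitive to the truncation. Thus the main obstacle is the rigorous time integration by parts at the weak regularity assumed on $\partial_t u$; the remainder is a soft sign argument.
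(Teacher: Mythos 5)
Your proof is correct and follows essentially the same energy-method strategy as the paper, which in turn references Proposition 4.11 of Grigoryan--Hu (\cite{GrHu08}): test the subsolution inequality with (an approximation of) $u_+$, exploit the sign of the nonlocal form, integrate in time, and conclude from the vanishing initial datum. The only material difference is that the paper takes the Dirichlet-form route -- testing against $\phi(u)$ for a smooth normal contraction $\phi$ approximating $(\cdot)_+$ and invoking \eqref{eq:truncprop} to keep $(\cE^\rho_t,H^{\alpha/2})$ a closed form in the truncated case -- whereas you test directly against $u_+$ (which is admissible since the positive part is a normal contraction on $H^{\alpha/2}$) and use the elementary pointwise inequality $(a-b)(a_+-b_+)\ge(a_+-b_+)^2$, so you need only nonnegativity and symmetry of $k$ (resp.\ $k_\rho$), not coercivity. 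Your version is slightly more elementary and self-contained; you have also correctly identified (and sketched the standard Steklov-average remedy for) the one genuinely delicate step, namely justifying $\int\partial_t u\,u_+ = \tfrac12\partial_t\|u_+\|_{L^2}^2$ under the weak time regularity $\partial_t u\in L^1_{\loc}(I;L^2_{\loc})$.
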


\begin{proof}
By assumptions \eqref{eq:kupper}, \eqref{eq:klower} and dominated convergence theorem, we have for $f \in L^2(\R^d)$
\begin{equation*}
\lambda [f]^2_{H^{\alpha/2}(\R^d)} \le \cE_t(f,f) \le \Lambda [f]^2_{H^{\alpha/2}(\R^d)}, ~~ \forall t \in (0,T). 
\end{equation*}
Thus, $(\cE_t,H^{\alpha/2}(\R^d))$ is a regular Dirichlet form for every $t \in (0,T)$. Along the lines of Lemma 4.3 in \cite{GrHu08} one can prove
\begin{equation*}
\int_{\R^d} \partial_t u(t,x) \phi(u(t,x)) \d x \le 0, ~~ \text{ a.e. } t \in (\eta,T),
\end{equation*} 
where $\phi \in C^{\infty}(\R)$ such that $\phi \equiv 0$ in $(-\infty,0]$, $\phi > 0$ on $(0,\infty)$, and $0 \le \phi' \le 1$.
From here, the remainder of the proof follows along the lines of Proposition 4.11 in \cite{GrHu08}. The proof for subsolutions to $\partial_t u - L_t^{\rho} u = 0$ is carried out via similar arguments, using that for some $c > 0$:
\begin{equation}
\label{eq:truncprop}
[f]^2_{H^{\alpha/2}(\R^d)} \le c\rho^{-\alpha}\Vert f \Vert_{L^2(\R^d)}^2 + \cE_t^{\rho}(f,f), ~~  t \in (0,T).
\end{equation}
\end{proof}

In the following, for $f \in L^2(\Omega)$, we will denote by $(s,x) \mapsto P^{\Omega}_{\eta,s}f(x)$ the solution to $\partial_t u - L_t u = 0$ in $(\eta,T) \times \Omega$ with $\Vert P^{\Omega}_{\eta,s}f - f \Vert_{L^2(\Omega)} \to 0$ as $s \searrow \eta$, and $P^{\Omega}_{\eta,s}f \equiv 0$ in $\R^d \setminus \Omega$. We define $(s,x) \mapsto P^{\Omega,\rho}_{\eta,s}f(x)$ to be solution to the corresponding $\rho$-truncated problem.\\
In order to prove \autoref{lemma:MeyersDec}, we need an approximation result for $P_{\eta,s}f$. In the time-homogeneous case, its proof is given in Lemma 4.17 in \cite{GrHu08}. However, their argument does not work in our situation due to the lack of a resolvent operator associated with $P_{\eta,s}$.

\begin{lemma}
\label{lemma:sgapprox}
Let $(\Omega_n)_{n \in \N}$ be an increasing sequence of open subsets of $\R^d$ with $\bigcup_{n \in \N} \Omega_n = \R^d$, and $\eta \ge 0$.  Assume that $k$ satisfies \eqref{eq:kupper}, \eqref{eq:klower}. Then for every $f \in L^2(\R^d)$ it holds $P_{\eta,s}^{\Omega_n} f \to P_{\eta,s} f$, $P_{\eta,s}^{\Omega_n,\rho} f \to P^{\rho}_{\eta,s} f$ pointwise a.e. for a.e. $s \in (\eta,T)$, and every $\rho > 0$. 
\end{lemma}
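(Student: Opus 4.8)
\emph{Plan of proof.} The plan is to reduce to nonnegative data, use the parabolic maximum principle \autoref{lemma:pmp} to obtain monotone convergence of the truncated-domain solutions, and then identify the monotone limit as the full-space solution by a compactness-plus-uniqueness argument. First I would reduce to $f \ge 0$: since $f \mapsto P^{\Omega_n}_{\eta,s}f$ and $f \mapsto P_{\eta,s}f$ are linear, writing $f = f_+ - f_-$ reduces the claim to nonnegative data. So assume $f \ge 0$ and set $u_n := P^{\Omega_n}_{\eta,s}f$, $u := P_{\eta,s}f$. The structural fact is the chain
\[ 0 \le u_n \le u_{n+1} \le u \quad \text{a.e.\ in } (\eta,T) \times \R^d , \]
each link of which follows from \autoref{lemma:pmp}. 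For $u_n \ge 0$ one applies it on $(\eta,T)\times\Omega_n$ to $-u_n$, noting that $(-u_n)_+(t) = (u_n)_-(t) \in H^{\alpha/2}_{\Omega_n}(\R^d)$ (a normal contraction of $u_n(t)$, vanishing outside $\Omega_n$) and that $(u_n)_-(t) \to (f)_- = 0$ in $L^2$ as $t \searrow \eta$; the same argument with $\Omega = \R^d$ gives $u \ge 0$. For $u_n \le u_{n+1}$ one applies \autoref{lemma:pmp} on $(\eta,T)\times\Omega_n$ to $w := u_n - u_{n+1}$: on that cylinder $w$ solves $\partial_t w - L_t w = 0$ (both $u_n$ and $u_{n+1}$ do, the latter because $\Omega_n \subset \Omega_{n+1}$), on $\R^d \setminus \Omega_n$ one has $w = -u_{n+1} \le 0$ so $w_+(t) \in H^{\alpha/2}_{\Omega_n}(\R^d)$, and $w_+(t) \to 0$ in $L^2$ as $t \searrow \eta$; the conclusion $w \le 0$ on $\Omega_n$ together with $w \le 0$ on $\R^d \setminus \Omega_n$ gives $u_n \le u_{n+1}$, and replacing $u_{n+1}$ by $u$ gives $u_n \le u$. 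The truncated versions are identical, using the truncated form of \autoref{lemma:pmp}.

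Now let $v := \lim_{n\to\infty} u_n$ be the pointwise a.e.\ (monotone) limit, so $0 \le v \le u$. Testing the equation for $u_n$ with $u_n$ itself gives the energy identity $\tfrac12 \Vert u_n(s)\Vert_{L^2(\R^d)}^2 + \int_\eta^s \cE_t(u_n,u_n)\, \d t = \tfrac12 \Vert f\Vert_{L^2(\R^d)}^2$, whence, using $\cE_t \ge 0$ and the global coercivity deduced from \eqref{eq:klower}, a bound for $u_n$ in $L^\infty((\eta,T);L^2(\R^d)) \cap L^2((\eta,T);H^{\alpha/2}(\R^d))$ uniform in $n$ (in the truncated case one uses \eqref{eq:truncprop} and works on bounded time intervals). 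On any slab $(\eta,s)\times\R^d$ with $s < T$, dominated convergence (dominating function $u$) gives $u_n \to v$ in $L^2$, so $v \in L^2((\eta,s);H^{\alpha/2}(\R^d))$ and $u_n \rightharpoonup v$ weakly there. Passing to the limit in the time-integrated weak formulation of the Cauchy problem for $u_n$ — legitimate once $n$ is so large that a given compactly supported test function is supported in $\Omega_n$, and using \eqref{eq:kupper} to see that $g \mapsto \int \psi(t)\cE_t(g(t),\phi)\,\d t$ is a bounded functional on $L^2((\eta,s);H^{\alpha/2}(\R^d))$ — shows that $v$ solves $\partial_t v - L_t v = 0$ in $(\eta,T)\times\R^d$ in the (distributional) weak sense. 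To identify the initial datum, note first that $\Vert v(s)\Vert_{L^2} \le \Vert f\Vert_{L^2}$; by positivity, for every $n$, $\int_{\R^d} v(s,x)f(x)\,\d x \ge \int_{\R^d} u_n(s,x)f(x)\,\d x \to \Vert f\Vert_{L^2}^2$ as $s \searrow \eta$, while $\int_{\R^d} v(s,x)f(x)\,\d x \le \Vert v(s)\Vert_{L^2}\Vert f\Vert_{L^2} \le \Vert f\Vert_{L^2}^2$; hence $\int_{\R^d} v(s,x)f(x)\,\d x \to \Vert f\Vert_{L^2}^2$ and therefore
\[ \Vert v(s) - f\Vert_{L^2(\R^d)}^2 = \Vert v(s)\Vert_{L^2}^2 - 2\!\int_{\R^d}\! v(s,x)f(x)\,\d x + \Vert f\Vert_{L^2}^2 \le 2\Vert f\Vert_{L^2}^2 - 2\!\int_{\R^d}\! v(s,x)f(x)\,\d x \;\longrightarrow\; 0 \]
as $s \searrow \eta$.

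Finally, $v$ is a solution of \eqref{eq:CP2} with $v(\eta) = f$, so $v = u = P_{\eta,s}f$ by the uniqueness assumed throughout; equivalently, $w := u - v \ge 0$ is a weak solution with vanishing initial datum, and testing its equation with $w$ (after the usual cut-off approximation, using the uniform energy bound) forces $\tfrac12\Vert w(s)\Vert_{L^2}^2 + \int_\eta^s \cE_t(w,w)\,\d t = 0$, hence $w \equiv 0$. By Fubini this yields $P^{\Omega_n}_{\eta,s}f(x) \to P_{\eta,s}f(x)$ for a.e.\ $x$ and a.e.\ $s \in (\eta,T)$, and the truncated statement $P^{\Omega_n,\rho}_{\eta,s}f \to P^{\rho}_{\eta,s}f$ follows by the same scheme. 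I expect the main obstacle to be the identification step: verifying that the monotone limit $v$ genuinely belongs to the solution class in which uniqueness holds — in particular extracting the required time-regularity of $v$ from the weak formulation — and, to a lesser extent, arranging the function-space bookkeeping in the maximum-principle argument so that \autoref{lemma:pmp} applies on each $\Omega_n$.
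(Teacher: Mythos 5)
Your proposal is correct and follows essentially the same route as the paper: monotonicity via the parabolic maximum principle, a uniform $L^2((\eta,T);H^{\alpha/2})$ energy bound from testing with $u_n$, weak convergence, and identification by uniqueness. You make explicit two points the paper elides (reduction to $f\ge 0$, and the initial-condition recovery); note only that $\Vert u_n(s)-f\Vert_{L^2(\Omega_n)}\to 0$ rather than in $L^2(\R^d)$, so the chain $\int v(s)f \ge \int u_n(s)f \to \Vert f\Vert_{L^2(\Omega_n)}^2$ needs the extra step of letting $n\to\infty$ afterward.
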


\begin{proof}
We denote $u_n(s) := P^{\Omega_n}_{\eta,s} f$. By the parabolic maximum principle it follows that $u_n(s) \le u_{n+1}(s) \le \cdots \le P_{\eta,s} f$. Consequently, there exists a function $u \in L^2((\eta,T)\times \R^d)$ such that $u_n(s) \nearrow u(s) \le P_{\eta,s} f$ a.e., and by dominated convergence we have $\Vert u_n - u\Vert_{L^2((\eta,T)\times \R^d)} \to 0$.\\
It remains to prove $u(s) = P_{\eta,s}f$. 
Note that by \eqref{eq:defsubsol} it is sufficient to prove that $u_n \rightharpoonup u$ in $L^2((\eta,T);H^{\alpha/2}(\R^d))$. 
Let us test the equation for $u_n$ with $\phi =  u_n$, integrate over $(\eta,T)$. Then:
\begin{equation*}
\begin{split}
\int_{\R^d} u_n^2(T,x) \d x &+ 2\int_{\eta}^T \cE_t(u_n(t),u_n(t)) \d t \le \int_{\R^d} f^2(x)\d x.
\end{split}
\end{equation*}  
By uniform boundedness of $\Vert u_n \Vert_{L^2((\eta,T)\times \R^d)}$, we conclude that also $\Vert u_n \Vert_{L^{2}((\eta,T);H^{\alpha/2}(\R^d))}$ is uniformly bounded, which implies $u_n \rightharpoonup u$ in $L^2((\eta,T);H^{\alpha/2}(\R^d))$, as desired.\\
The desired result for $P^{\Omega_n,\rho}_{\eta,s}$ follows by the same argument, using \eqref{eq:truncprop} to conclude boundedness in $L^2((\eta,T);H^{\alpha/2}(\R^d))$.
\end{proof}

\begin{proof}[Proof of \autoref{lemma:MeyersDec}]
Let $f \in L^2(\R^d)$ with $f \ge 0$, and $\Omega_n \subset \R^d$ as in \autoref{lemma:sgapprox}. We proceed as in the proof of Proposition 4.4 in \cite{GHL14} to deduce for a.e. $0 \le \eta < s < T$ and $n \in \N$:
\begin{equation}
\label{eq:MDapprox}
P^{\Omega_n}_{\eta,s} f \le P^{\Omega_n,\rho}_{\eta,s}f + \int_{\eta}^s \left[ \int_{\R^d} P^{\Omega_n,\rho}_{\eta,\tau} f(y)  K_{\rho}(y) \d y \right] \d \tau,
\end{equation}
where $K_{\rho}(y) = \sup_{z \in \R^d, t \in (\eta,T)} k(t;z,y)\mathbbm{1}_{\{ \vert z-y \vert \ge \rho \}}(y)$.
The proof of \eqref{eq:MDapprox} directly follows by application of the parabolic maximum principle \autoref{lemma:pmp} for $L_t$ to
\begin{equation*}
u(s,x) = P_{\eta,s}^{\Omega_n} f(x) - P^{\Omega_n,\rho}_{\eta,s}f(x) - \phi_n(x) \int_{\eta}^s \left[ \int_{\R^d} P^{\Omega_n,\rho}_{\eta,\tau} f(y) K_{\rho}(y) \d y \right] \d \tau ,
\end{equation*}
where $\phi_n \in C_c^{\infty}(\R^d)$ with $0 \le \phi_n \le 1$ and $\phi_n \equiv 1$ in $\Omega_n$. Taking $n \to \infty$ in \eqref{eq:MDapprox} implies
\begin{equation*}
P_{\eta,s} f \le P^{\rho}_{\eta,s}f + \int_{\eta}^s \left[ \int_{\R^d} P^{\rho}_{\eta,\tau} K_{\rho}(y) f(y)  \d y \right] \d \tau,
\end{equation*}
where we used \autoref{lemma:sgapprox} and \eqref{eq:psymm}.\\
By \eqref{eq:sgrepresentation} it is easy to deduce for a.e. $0 \le \eta < s < T$ and a.e. $x,y \in \R^d$:
\begin{equation}
\label{eq:MDhelp1}
p(y,s;x,\eta) \le p_{\rho}(y,s;x,\eta) + \int_{\eta}^s P^{\rho}_{\eta,\tau} K_{\rho}(y) \d \tau.
\end{equation}
\eqref{eq:nontrunctrunc} is now a direct consequence of \eqref{eq:pint} and \eqref{eq:kupper}. 
The proof of \eqref{eq:truncnontrunc} follows by carrying out the same arguments as in the proof of Proposition 4.4 in \cite{GHL14} with 
\begin{equation*}
u(s,x) = P^{\Omega_n,\rho}_{\eta,s}f(x) - P^{\Omega_n}_{\eta,s}f(x)e^{(s-\eta)K_{\rho}'},
\end{equation*}
where $K_{\rho}':= \sup_{(t,x) \in (0,T) \times \R^d} \int_{\R^d \setminus B_{\rho}(x)} k(t;x,y) \d y$. This time, we apply the parabolic maximum principle \autoref{lemma:pmp} for $L^{\rho}_t$ and obtain for a.e. $0 \le \eta < s < T$ and a.e. $x,y \in \R^d$:
\begin{equation}
\label{eq:MDhelp2}
p_{\rho}(y,s;x,\eta) \le p(y,s;x,\eta) e^{(s-\eta)K_{\rho}'}.
\end{equation}
Note that by H\"older regularity of weak solutions to \eqref{eq:CP}, it is possible to obtain the desired results for every $\eta,s,x,y$.
\end{proof}
\enlargethispage*{8ex}


\end{document}